\numberwithin{equation}{section}
\newtheorem{thm}{Theorem}[section]
\newtheorem{prop}[thm]{Proposition}
\newtheorem{lem}[thm]{Lemma}
\newtheorem{cor}[thm]{Corollary}
\theoremstyle{definition}
\newtheorem{defn}[thm]{Definition}
\theoremstyle{remark}
\newtheorem{rem}[thm]{Remark}
\newcommand{\al}{\alpha}
\newcommand{\be}{\beta}
\newcommand{\ga}{\gamma}
\newcommand{\de}{\delta}
\newcommand{\e}{\varepsilon}
\newcommand{\fy}{\varphi}
\newcommand{\la}{\lambda}
\newcommand{\sgm}{\sigma}
\newcommand{\ze}{\zeta}
\renewcommand{\th}{\theta}
\newcommand{\p}{\partial}
\newcommand{\med}{\mathrm{med}}
\newcommand{\I}{\infty}
\newcommand{\Sc}[1]{\mathcal{#1}}
\newcommand{\F}{\Sc{F}}
\newcommand{\FR}[1]{\mathfrak{#1}}
\newcommand{\Bo}[1]{\mathbb{#1}}
\newcommand{\R}{\Bo{R}}
\newcommand{\T}{\Bo{T}}
\newcommand{\Tg}{\Bo{T}_\ga}
\newcommand{\Zg}{\Bo{Z}_\ga}
\newcommand{\lec}{\lesssim}
\newcommand{\gec}{\gtrsim}
\newcommand{\hhat}{\widehat}
\newcommand{\bbar}{\overline}
\newcommand{\ti}{\widetilde}
\newcommand{\supp}[1]{\> \operatorname{supp}\> #1}
\newcommand{\Supp}[2]{\supp{#1}\subset #2}
\newcommand{\shugo}[1]{\{ #1\}}
\newcommand{\Shugo}[2]{\big\{ \, #1 \, \big| \, #2 \, \big\}}
\newcommand{\LR}[1]{{\langle #1 \rangle }}
\newcommand{\chf}[1]{\textbf{1}_{#1}}
\newcommand{\eq}[2]{\begin{equation} \label{#1} \begin{split} #2 \end{split} \end{equation}}
\newcommand{\eqq}[1]{\begin{equation*} \begin{split} #1 \end{split} \end{equation*}}
\newcommand{\norm}[2]{\big\| #1 \big\| _{#2}}
\newcommand{\tnorm}[2]{\| #1 \| _{#2}}
\newcommand{\hx}{\hspace{10pt}}
\newcommand{\eqs}[1]{\begin{gather*} #1 \end{gather*}}
\title[Zakharov system on torus]{Local well-posedness for the Zakharov system on multidimensional torus}
\author[Nobu Kishimoto]{Nobu Kishimoto}
\address{Department of Mathematics, Kyoto University}
\email{n-kishi@math.kyoto-u.ac.jp}
\date{}
\begin{document}

\begin{abstract}
The initial value problem of the Zakharov system on two dimensional torus with general period is shown to be locally well-posed in the Sobolev spaces of optimal regularity, including the energy space.
Unlike the one dimensional case studied by Takaoka (1999), the optimal regularity does not depend on the period of torus.
Proof relies on a standard iteration argument using the Bourgain norms.
The same strategy is also applicable to three and higher dimensional cases.
\end{abstract}

\maketitle


\section{Introduction}\label{sec_intro}
In the present paper, we investigate the initial value problem of the Zakharov system with periodic boundary condition:
\begin{equation}\label{Z}
\left\{
\begin{array}{@{\,}r@{\;}l}
i\p _tu+\Delta _\al u&=\la nu,\qquad u:[-T,T]\times \Tg ^d \to \Bo{C},\\
\frac{1}{c_0^2}\p _t^2n-\Delta _\al n&=\Delta _\be (|u|^2),\qquad n:[-T,T] \times \Tg ^d\to \R ,\\
(u,n,\p _tn)\big| _{t=0}&=(u_0,n_0,n_1)\in H^{s,l},
\end{array}
\right.
\end{equation}
where
\eq{constants}{\la \in \Bo{C}\setminus \shugo{0},\quad c_0\in \R _+:=(0,\I ),\quad \al ,\ga \in \R _+^d,\quad \be \in \R ^d\setminus \shugo{(0,\dots ,0)}}
are constants,
\[ \Delta _\al:=\al _1\frac{\p ^2}{\p x_1^2}+\dots +\al _d\frac{\p ^2}{\p x_d^2}\]
denotes the Laplacian with general coefficients $\al$, and
\[ \Tg ^d:=(\R /2\pi \ga _1\Bo{Z})\times \dots \times (\R /2\pi \ga _d\Bo{Z})\]
denotes the $d$ dimensional torus of general period $2\pi \ga$.
For a $2\pi \ga$-periodic function $\fy$, we define the Fourier coefficients $\F _x\fy (k)\equiv\hhat{\fy}(k)$ by
\[ \hhat{\fy}(k):=\int _{\Tg ^d}e^{-ik\cdot x}\fy (x)\,dx,\qquad k\in \Zg ^d:=(\ga _1^{-1}\Bo{Z})\times \dots \times (\ga _d^{-1}\Bo{Z}).\]
We also define the spacetime Fourier transform of a function $u(t,x)$ on $\R \times \Tg ^d$ in the usual fashion, denoted by $\F _{t,x}u(\tau ,k)\equiv \ti{u}(\tau ,k)$.
Then, the spaces of initial data
\eqq{H^{s,l}:=H^{s}(\Tg ^d;\Bo{C})\times H^{l}(\Tg ^d;\R )\times H^{l-1}(\Tg ^d;\R )}
for $s,l\in \R$ are the Sobolev spaces on $\Tg ^d$ equipped with the norm
\eqs{\norm{(u_0,n_0,n_1)}{H^{s,l}}^2:=\norm{u_0}{H^{s}}^2+\norm{n_0}{H^{l}}^2+\norm{n_1}{H^{l-1}}^2,\\
\norm{\fy}{H^s(\Tg ^d)}^2:=\frac{1}{\ga _1\dots \ga _d}\sum _{k\in \Zg ^d}\LR{k}^{2s}|\hhat{\fy}(k)|^2,\qquad \LR{k}:=(1+|k|^2)^{1/2}.}

This equation, introduced by Zakharov~\cite{Z}, is a mathematical model for the \mbox{Langmuir} turbulence in unmagnetized ionized plasma; $u$ represents the slowly varying envelope of rapidly oscillating electric field, and $n$ is the deviation of ion density from its mean.
It is natural from the physical point of view to consider spatially anisotropic Laplacians or tori.
Nevertheless, we can normalize constants as $c_0=1$, $\al =(1,\dots ,1)$ by a spacetime scaling.
In this article, all of these constants are supposed to be fixed and we will not consider parameter limits such as $c_0\to \I$ and $|\ga _j|\to \I$.

We study the local well-posedness of the initial value problem \eqref{Z} in $H^{s,l}$.
Here, the local well-posedness in $H^{s,l}$ means the existence of local-in-time strong solutions belonging to the class
\eqs{u\in C([-T,T];H^{s}(\Tg ^d;\Bo{C})),\\
n\in C([-T,T];H^{l}(\Tg ^d;\R ))\cap C^1([-T,T];H^{l-1}(\Tg ^d;\R ))}
(we write $(u,n)\in \mathcal{C}([-T,T];H^{s,l})$ to denote this for short), uniqueness of solutions in a suitable function space, and continuous dependence of solutions upon initial data.
The aim of this article is to establish the above properties in as low regularity as possible.
To prove these properties, we shall use the basic Bourgain method (\cite{B0}), namely, an iteration argument for the integral equations corresponding to the initial value problem \eqref{Z} using the Bourgain norms to be defined later.
One of the motivations for pushing down the regularity is to construct strong solutions in the regularity of conservation laws, such as the energy class.
For the Zakharov system, the local well-posedness in the energy class, which is roughly $H^{1,0}$, is known for the cases of $x\in \R$, $\R^2$, $\R^3$, and $\T$ (\cite{BC,GTV,Ta}), and we shall prove this for $\T^2$.
Another interest in the low regularity is construction of invariant measures, which was only achieved in the case of $\T$ (\cite{B94}).
We will not address this issue, however.

Well-posedness of the initial value problem for the Zakharov system has been extensively studied for the nonperiodic case $x\in \R ^d$.
We recall some of them here, focusing on the particular case $l=s-\frac{1}{2}$.
It is expected that the optimal (lowest) corner of the regularity range for well-posedness appears on the line $l=s-\frac{1}{2}$, because in this case two equations in the Zakharov system equally share the loss of derivative.
In addition, the ``critical regularity'' with respect to scaling, which is $(s^c,l^c)=(\frac{d-3}{2},\frac{d-4}{2})$, is also on this line.
(The Zakharov system does not have the scaling invariance, but the concept of critical regularity was introduced in \cite{GTV} by considering some simplified system which is scaling-invariant.)
Then, the Zakharov system on $\R$ and $\R^2$ was shown to be locally well-posed in $H^{0,-\frac{1}{2}}$ by Ginibre, Tsutsumi, Velo~\cite{GTV} and by Bejenaru, Herr, Holmer, Tataru~\cite{BHHT}, respectively.
In these cases $H^{0,-\frac{1}{2}}$ is known to be the lowest regularity that can be achieved by the direct iteration method, although it is away from the scaling-critical regularity.
In higher dimensional cases, the local well-posedness was established in the whole subcritical range, namely, in $H^{s^c+\e ,l^c+\e}$ with any $\e >0$, in \cite{GTV} for $d\ge 4$ and by Bejenaru, Herr~\cite{BH} for $d=3$.
The well-posedness in the energy class $H^{1,0}$, which is not on the line $l=s-\frac{1}{2}$, was obtained by Bourgain, Colliander~\cite{BC} for $d=2,3$ and in~\cite{GTV} for $d=1$.

Compared to this, there are few results on the periodic boundary value problem.
As far as the author knows, no well-posedness result for $d\ge 2$ has been found so far in the literature.
However, in the case of one spatial dimension, the sharp local well-posedness was given by Takaoka \cite{Ta}, which is again in the lowest regularity achieved by the direct iterative approach.
In \cite{Ta} it was shown that \eqref{Z} is locally well-posed in $H^{0,-1/2}$ when $\ga \not\in \Bo{N}$ and in $H^{1/2,0}$ when $\ga \in \Bo{N}$, both of which are sharp.
Note that the best regularity depends on the spatial period.
We also remark that an invariant Gibbs measure was constructed by Bourgain~\cite{B94} in the one dimensional, $\ga =1$ case.

Our main results address the case $d\ge 2$.
It seems interesting that the period $\ga$ has nothing to do with the regularity threshold, in contrast to the 1d case.
\begin{thm}\label{thm_main}
Let $d\ge 2$.
Then, for \emph{any} $\ga \in \R _+^d$, \eqref{Z} is locally well-posed in $H^{s,l}$ with $(s,l)$ in the range
\begin{alignat}{3}
&0\le s-l\le 1,&\quad &2s\ge l+\tfrac{d}{2}>d-1,&\qquad &(\text{for}\hx d\ge 3)\label{hani_3d}\\
&0\le s-l\le 1,&\quad &2s\ge l+1\ge 1.&\qquad &(\text{for}\hx d=2)\label{hani_2d}
\end{alignat}
\end{thm}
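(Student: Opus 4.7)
The plan is to follow the standard Bourgain iteration in $X^{s,b}$-type spaces adapted to the two linear propagators. The first step is to diagonalize the wave equation: with $D_\al :=(-\De _\al )^{1/2}$ and $n_\pm :=n\pm i(c_0D_\al )^{-1}\p _tn$, one has $n=\tfrac 12(n_++n_-)$, while \eqref{Z} is equivalent to the first-order system
\eqq{(i\p _t+\De _\al )u=\tfrac{\la}{2}(n_++n_-)u,\qquad (i\p _t\mp c_0D_\al )n_\pm =\mp c_0D_\al ^{-1}\De _\be (|u|^2),}
with $n_\pm |_{t=0}\in H^l(\Tg ^d;\Bo{C})$ determined from $(n_0,n_1)$. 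Local well-posedness will then be obtained by a contraction mapping argument on the associated Duhamel equations restricted to $[-T,T]$ in a closed ball of $X^{s,b}_S\times X^{l,b}_+\times X^{l,b}_-$, where
\eqq{\norm{u}{X^{s,b}_S}^2:=\frac{1}{\ga _1\cdots \ga _d}\sum _{k\in \Zg ^d}\int _\R \LR{k}^{2s}\LR{\tau +|k|^2_\al }^{2b}|\ti u(\tau ,k)|^2\, d\tau ,}
with $|k|^2_\al :=\al _1k_1^2+\cdots +\al _dk_d^2$, and the $X^{l,b}_\pm$ norm is defined analogously with Bourgain weight $\LR{\tau \pm c_0|k|_\al }$. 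I would fix some $b>1/2$ close to $1/2$ and a companion $0<b'<1/2$ with $b+b'<1$, so that the usual linear and time-localization estimates for Bourgain spaces apply.

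Via these linear estimates, the contraction reduces to two bilinear estimates, understood for either choice of sign:
\eqq{\norm{n_\pm u}{X^{s,-b'}_S}\lec \norm{n_\pm }{X^{l,b}_\pm }\norm{u}{X^{s,b}_S},\qquad \norm{D_\al ^{-1}\De _\be (u\bbar v)}{X^{l,-b'}_\pm }\lec \norm{u}{X^{s,b}_S}\norm{v}{X^{s,b}_S}.}
By Plancherel, each becomes a trilinear $L^2$-integral in $(\tau ,k,\tau _1,k_1)$. I would Littlewood-Paley decompose in frequency ($|k|\sim N$, $|k_1|\sim N_1$, $|k-k_1|\sim N_2$) and in modulation ($L$, $L_1$, $L_2$), and invoke the associated resonance identity; for the first estimate this reads
\eqq{\bigl[\tau +|k|^2_\al \bigr]-\bigl[\tau _1\pm c_0|k_1|_\al \bigr]-\bigl[(\tau -\tau _1)+|k-k_1|^2_\al \bigr]=2\al \cdot k\cdot k_1-|k_1|^2_\al \mp c_0|k_1|_\al ,}
and forces $\max (L,L_1,L_2)$ to exceed the absolute value of the right-hand side, with an analogous identity in the second estimate. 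Each dyadic block is then controlled by \Hineq combined with either a direct $\ell ^2$-count of lattice points on level sets of the resonance function, or the periodic $L^4$-Strichartz estimate of Bourgain, $\norm{e^{it\De _\al }P_Nf}{L^4_{t,x}}\lec N^\e \norm{f}{L^2}$, which holds on $\Tg ^d$ for any $\e >0$ with constant independent of $N$.

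Once each dyadic block is bounded, summation in $N,N_1,N_2,L,L_1,L_2$ is routine provided $(s,l)$ lies in the stated range. The main obstacle will be two classes of coherent interactions with small resonance and weak modulation gain: (i) the high--low regime $N\sim N_2\gg N_1$ in the first estimate, where only a gain of $\sim N_1$ is available and must absorb the weight $\LR{k_1}^l$ from the wave norm; and (ii) the high--high to low regime $N_1\gg N\sim N_2$ in the second estimate, where the Fourier multiplier $D_\al ^{-1}\De _\be$ contributes an output factor of size $\sim N_1$. Balancing the modulation gain, derivative losses, and cost of the periodic $L^4$-Strichartz exponent precisely produces the conditions $2s\ge l+\tfrac{d}{2}>d-1$ for $d\ge 3$ and $2s\ge l+1\ge 1$ for $d=2$; the borderline case in two dimensions reflects the $\e$-loss in the critical $L^4$-Strichartz estimate on $\Tg ^2$, which is absorbed by an $\e$-margin in the Bourgain weights. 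With the bilinear estimates in hand, the contraction closes in the claimed range and yields existence, uniqueness in the restricted Bourgain class, persistence in $\Sc{C}([-T,T];H^{s,l})$, and Lipschitz continuous dependence on the initial data.
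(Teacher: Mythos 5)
Your outline starts from the right place (reduction to a first-order system, iteration in Bourgain-type spaces, dyadic decomposition in frequency and modulation, resonance identity). But two of the technical pillars you plan to lean on do not hold at the level of generality the theorem requires, and a third issue undermines the endpoint cases.

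First, the periodic $L^4$-Strichartz estimate $\norm{e^{it\Delta_\alpha}P_Nf}{L^4_{t,x}}\lesssim N^\e\norm{f}{L^2}$ is \emph{not} available in the setting of the theorem. For $d\ge 3$ the loss is polynomial, $N^{\frac{d-2}{4}+}$ rather than $N^\e$, since $L^4$ lies strictly above the critical Strichartz exponent $\frac{2(d+2)}{d}$. Even for $d=2$, the $N^\e$ bound was at the time only known for rational tori, via the number-theoretic lattice-point count \eqref{est_nt}; the paper explicitly notes that \eqref{est_nt} is open for irrational $\gamma$. Since the theorem claims well-posedness for \emph{arbitrary} $\gamma\in\R_+^d$, any argument resting on this $L^4$ estimate cannot deliver the result. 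The paper circumvents this precisely by avoiding number-theoretic Strichartz altogether and instead proving a bilinear estimate (Lemma~\ref{lem_bs}) and elementary geometric lattice-point bounds (Lemma~\ref{lem_annulustrip}) that use only the trivial fact that a unit ball contains $O(1)$ lattice points of $\Zg^d$.

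Second, your choice of classical $X^{s,b}$ with $b>\tfrac12$ and companion $b'<\tfrac12$ with $b+b'<1$ cannot reach the endpoint regularities included in the statement, in particular the closed inequalities $2s=l+\tfrac{d}{2}$, $s-l=1$, and, crucially for $d=2$, $l=0$ (the energy space $H^{1,0}$). The paper instead works in $\ell^1$-Besov Bourgain spaces $X^{s,\frac12,1}$, precisely to capture these borderlines; the $\e$-margin from $b>\tfrac12$ would push you strictly inside the claimed range and would also exclude the resonant low-modulation interactions that dominate at the endpoint. Relatedly, the low-modulation high--high regime ($1\ll N_0\lesssim N_1\sim N_2$, $L_{\max}\ll N_1$) is the decisive case: there the nonlinear smoothing is very weak, and the paper needs a careful intersection bound (thin annulus meets thin plate) plus, in $d=2$, an angular decomposition with an orthogonality lemma (Lemma~\ref{lem_orthogonality}) to avoid a logarithmic divergence. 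Your ``direct $\ell^2$-count on level sets of the resonance function'' is not specified at the level needed to reproduce these bounds, and it is exactly here that a naive count fails and the geometry of the resonant set must be exploited.

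Finally, a smaller but real issue: $D_\alpha^{-1}=(-\Delta_\alpha)^{-1/2}$ is singular at the zero mode, so the diagonalization $n_\pm=n\pm i(c_0D_\alpha)^{-1}\partial_t n$ is ill-defined unless one imposes $\hhat{n_1}(0)=0$. The paper instead uses $\LR{\nabla}^{-1}$, which is bounded, together with a harmless lower-order correction term $\LR{\nabla}^{-1}(w+\bar w)$ in the reduced system.

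So the high-level strategy matches the paper, but the quantitative ingredients are wrong: you need to replace the $L^4$-Strichartz-with-$\e$-loss by the bilinear Strichartz and elementary lattice-counting machinery of the paper (or something equivalent that works for arbitrary $\gamma$ and all $d\ge 2$), and you need to move to $X^{s,\frac12,1}$-type spaces to reach the endpoints.
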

The precise statement for well-posedness results will be given in Theorem~\ref{thm_main2} after introducing function spaces.
Next, we give negative results.
\begin{thm}\label{thm_illp}
For any $d\ge 2$ and $\ga \in \R _+^d$, the data-to-solution map of \eqref{Z} on smooth data cannot extend to a $C^2$ map from any neighborhood of the origin in $H^{s,l}$ into $\mathcal{C}([-T,T];H^{s,l})$ for any $T>0$, provided $l>\min \shugo{2s-1,\,s+1}$ or $l<\max \shugo{0,\,s-2}$.
Moreover, if $d=2$ and $s<\frac{3}{2}$, $l\ge 0$, $l>2s-1$, then \eqref{Z} is ill-posed in $H^{s,l}$.
\end{thm}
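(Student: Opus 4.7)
The plan is to handle the two statements of Theorem~\ref{thm_illp} separately. Both rely on an explicit Fourier computation of a Picard iterate of \eqref{Z} evaluated on initial data concentrated on one or two high Fourier modes. For the failure of $C^2$ regularity of the flow map at the origin, I would use the standard second-variation obstruction: if the flow were $C^2$ from $H^{s,l}$ into $\mathcal{C}([-T,T];H^{s,l})$, then the bilinear Duhamel iterates
\[ u^{(2)}(t)=-i\la \int _0^t e^{i(t-t')\De _\al}\bigl[n^{\rm lin}(t')u^{\rm lin}(t')\bigr]\,dt',\]
\[ n^{(2)}(t)=c_0\int _0^t \frac{\sin ((t-t')c_0\sqrt{-\De _\al})}{\sqrt{-\De _\al}}\,\De _\be\bigl|u^{\rm lin}(t')\bigr|^2\,dt',\]
with $u^{\rm lin}$ and $n^{\rm lin}$ the linear Schr\"odinger and wave evolutions of the data, would have to be bounded bilinear maps into the same target class. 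Thus for each of the four threshold conditions on $(s,l)$ it suffices to exhibit test data with $H^{s,l}$-norm $O(1)$ whose bilinear output has $H^{s,l}$-norm blowing up as the concentration scale $N\to\I$.

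The four conditions correspond to four distinct bad frequency interactions, each realized by concentrating the data at a small number of high Fourier modes. For $l>2s-1$, high-high-to-high in $\De _\be|u|^2$: $u_0=N^{-s}(e^{iN_1\cdot x}+e^{iN_2\cdot x})$ with $|N_1|\sim|N_2|\sim|N_1-N_2|\sim N$ chosen so the Schr\"odinger phase $\al (|N_1|^2-|N_2|^2)$ lies within $O(1)$ of $\pm c_0\sqrt{-\De _\al}$ at $N_1-N_2$ (approximate wave resonance). For $l>s+1$, low-high in $\De _\be|u|^2$: one $u$-mode of size $1$ and the other of size $N$, which is automatically non-resonant. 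For $l<0$, high-high-to-high in $\la nu$ at Schr\"odinger resonance: $u_0$ at a mode $N_1$ and $(n_0,n_1)$ at a wave-evolution mode $N_2$ with $N_1\approx -N_2/2$ adjusted so that the Schr\"odinger--wave phase mismatch is $O(1)$. For $l<s-2$, a low $u$-mode and a very high $n$-mode. A direct Fourier computation of the designated output mode then gives $H^{s,l}$-norm growth $\gec N^{l+1-2s}$, $N^{l-s-1}$, $N^{-l}$, and $N^{s-l-2}$, respectively, yielding the contradiction as $N\to\I$. The only delicate point is realizing the approximate resonances in cases $(l>2s-1)$ and $(l<0)$ on the lattice $\Zg^d$ for arbitrary $\ga\in\R _+^d$; this is possible since $d\ge 2$ and the resonance condition is a single affine relation, which can be satisfied up to $O(1)$ by fixing one coordinate to its nearest lattice value while the remaining $d-1$ coordinates range freely.

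For the genuine ill-posedness when $d=2$, $s<\tfrac{3}{2}$, $l\ge 0$, $l>2s-1$, the strategy is to upgrade the $(l>2s-1)$-blowup of $n^{(2)}$ into an actual failure of continuity of the flow map at the origin. Consider the smooth test data $\mu _N(u_0^N,0,0)$ with $u_0^N$ as in the first case above and $\mu _N=T^{-1/2}N^{-(l+1-2s)/2}$. Then $\|\mu _N(u_0^N,0,0)\|_{H^{s,l}}=O(\mu _N)\to 0$, while direct computation yields $\|\mu _N^2n^{(2)}(T)\|_{H^l}\gec 1$. Constructing the actual solution for the smooth data via classical theory, one writes its Picard expansion and shows that the higher-order iterates are small in $H^l$ compared to $\mu _N^2n^{(2)}$ on $[0,T]$; hence the actual solution fails to converge to zero in $\mathcal{C}([-T,T];H^{s,l})$, contradicting continuity. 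The main obstacle is precisely the control of these higher Picard iterates, because the bilinear estimate in $H^l$ being violated in case $(l>2s-1)$ is exactly what one would normally use to bound them; the required control must come instead from a combination of the explicit Fourier support of the test data, a priori smoothness for smooth data, and interpolation with a higher-regularity norm of the type provided by Theorem~\ref{thm_main} at a pair $(s',l')$ inside the well-posedness range. The restriction $s<\tfrac{3}{2}$ appears to enter through these Sobolev interpolation arguments.
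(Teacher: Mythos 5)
Your treatment of the ``not $C^2$'' part is essentially the paper's proof. The second-variation obstruction, the four pairs of test data (the near-resonant pair $(K_N,\tilde K_N)$ for $l>2s-1$ and $l<0$; one low and one high mode for $l>s+1$ and $l<s-2$), and the resulting growth exponents $N^{l+1-2s}$, $N^{l-s-1}$, $N^{-l}$, $N^{s-l-2}$ all match Lemma~\ref{lem:notc2} exactly, including the device of making the affine resonance relation hold up to $O(1)$ by adjusting a single lattice coordinate.

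For the ill-posedness part you identify the right overall strategy, but the step you flag as the ``main obstacle'' is precisely where the paper's proof lives, and your sketch of how to bridge it misstates two things. First, the intermediate regularity at which $n^{(2)}$ is shown to grow is a pair $(s',l')$ with $l'>2s'-1$, hence \emph{outside} the well-posedness range; what lies on the well-posedness boundary is a \emph{second} pair $(s_+,l')$ with $l'=2s_+-1$, used only to estimate the difference $n_N-n^{(2)}[u_{0,N}]$ in $H^{l'}$. The error is small not because the bilinear estimate is applied at the growth regularity, but because $\|u_{0,N}\|_{H^{s_+}}\sim N^{s_+-s'}\to 0$ while the cubic iterate for $u$ is bounded by the bilinear theory at these auxiliary regularities; for $1\le s<3/2$ yet another pair $(s_-,l_-)$ is needed. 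Your description ``interpolation at a pair inside the well-posedness range'' flattens this two-regularity architecture, which is the crux. Second, your $\mu_N$-rescaling normalizes the output $\mu_N^2n^{(2)}(T)$ to size $O(1)$ in $H^l$, but the bilinear error estimate only controls $n_N-\mu_N^2n^{(2)}$ in $H^{l'}$ with $l'<l$; transferring to $H^l$ requires a frequency projection onto $|k|\sim N$ that you do not mention. The paper sidesteps this entirely by choosing $u_{0,N}=N^{-s'}f_N$ (equivalently $\mu_N=N^{s-s'}$, \emph{not} your $T^{-1/2}N^{-(l+1-2s)/2}$) so that the $H^{s'}$-norm of the data is $\sim 1$, yielding \emph{norm inflation} $\|n_N(t)\|_{H^{l'}}\gec|t|N^{l'-2s'+1}\to\infty$ directly in the space $H^{l'}$ where the error is controlled, with $H^l\hookrightarrow H^{l'}$. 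This is strictly stronger than the failure-of-continuity you aim for, and the argument closes cleanly because both the lower bound and the error bound are measured in the same $H^{l'}$. Your diagnosis that $s<\frac{3}{2}$ enters through the available choices of auxiliary regularity is correct.
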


See Figure~\ref{fig_regularity} for the ranges of regularity in these theorems.
\begin{figure}
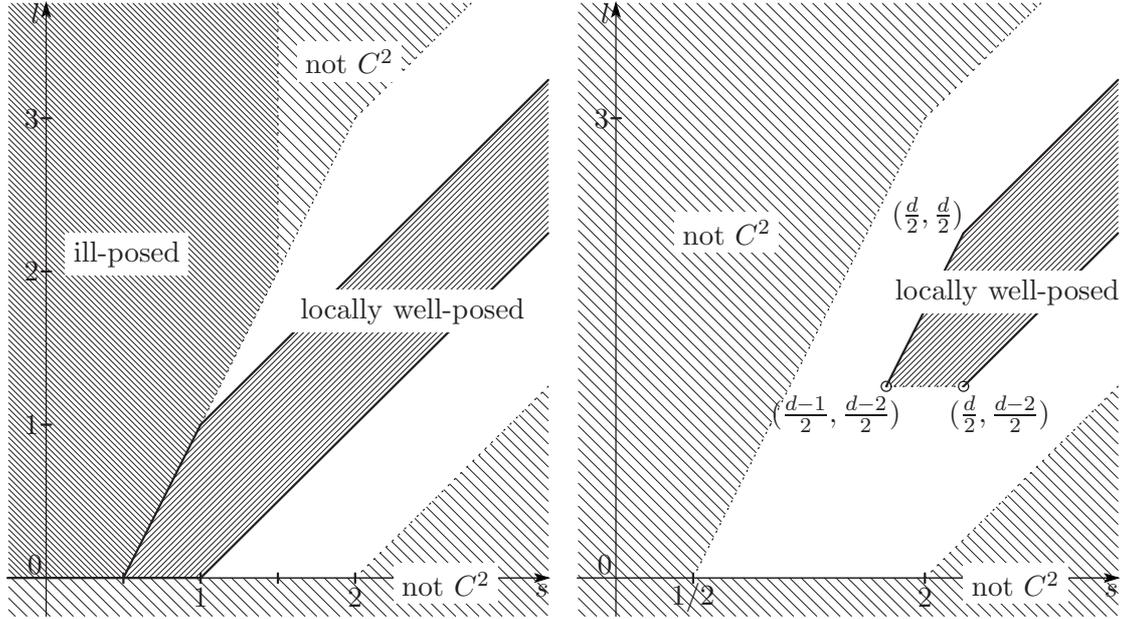
\label{fig_regularity}
\begin{center}
\input{Regularity2d.tex}\hspace{-0pt}\input{Regularity3d.tex}
\end{center}
\caption{Regularity assumptions in Theorems~\ref{thm_main} and \ref{thm_illp}, $d=2$ (left) and $d\ge 3$ (right).
The phrase ``not $C^2$'' means that there does not exist the data-to-solution map which is an extension of the map for smooth data and in $C^2$ with respect to the $H^{s,l}$ norm.}
\end{figure}
We remark that in 2d there is no gap between the regularity ranges indicated in Theorems~\ref{thm_main} and \ref{thm_illp}, at least on the line $l=s-\frac{1}{2}$.
In this sense we can say that our local well-posedness result for 2d is optimal.
For the results of $d\ge 3$, there still remains some gaps between the ranges given in two theorems above.
Theorem~\ref{thm_illp} will be given as a part of Theorem~\ref{thm_illp2} below.

Under some condition on parameters, \eqref{Z} is also described as a Hamiltonian PDE assuming that the initial velocity has zero mean, namely $\hhat{n_1}(0)=0$.
In the case where $c_0=\la =1$ and $\al =\be =(1,\dots ,1)$, the Hamiltonian is given by
\eqq{H(u,n)(t):=\norm{\nabla u(t)}{L^2}^2+\frac{1}{2}(\norm{n(t)}{L^2}^2+\norm{|\nabla |^{-1}\p _tn(t)}{L^2}^2)+\int _{\T ^d_\ga}n(t,x)|u(t,x)|^2\,dx.}
The energy space is then $H^1\times L^2\times |\nabla|L^2$, which is a closed subspace of $H^{1,0}$ invariant under the flow.
Since Theorem~\ref{thm_main} implies the local well-posedness in $H^{1,0}$ in the 2d case, using the conservation law of the Hamiltonian we obtain the global well-posedness in the energy space in 2d under some smallness assumption.
\begin{cor}
Let $d=2$, $\al ,\be \in \R _+^d$, $c_0,\la >0$.
Assume that $\al$ and $\be$ are linearly dependent.
Then, for initial data $(u_0,n_0,n_1)\in H^{1,0}$ satisfying $\hhat{n_1}(0)=0$ and $\tnorm{u_0}{L^2}\ll 1$, the solution to \eqref{Z} exists globally in time.
\end{cor}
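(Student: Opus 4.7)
The plan is to combine the local well-posedness in $H^{1,0}$ from Theorem~\ref{thm_main} (applicable for $d=2$ at $(s,l)=(1,0)$, since $0\le s-l=1\le 1$ and $2s=2\ge l+1=1$) with an a priori $H^{1,0}$ bound obtained from two conservation laws. The first, mass conservation $\|u(t)\|_{L^2}=\|u_0\|_{L^2}$, follows immediately from the Schr\"odinger equation by multiplying by $\bar u$, integrating, and taking imaginary parts, the coupling $\lambda nu$ contributing zero since $n$ is real. The second is the Hamiltonian. Under the linear dependence $\alpha,\beta\in \R_+^d$ we may write $\beta=c\alpha$ with $c>0$, so $\Delta_\beta=c\Delta_\alpha$, and a short computation yields a conserved quantity of the form
\begin{equation*}
H(u,n)=\tfrac{1}{\lambda}\sum_{j=1}^d \alpha_j\|\p_j u\|_{L^2}^2+\tfrac{1}{2c}\|n\|_{L^2}^2+\tfrac{1}{2cc_0^2}\||\nabla_\alpha|^{-1}\p_t n\|_{L^2}^2+\int _{\Tg ^2}n|u|^2\,dx,
\end{equation*}
which reduces to the Hamiltonian displayed in the excerpt when $c_0=\lambda=c=1$; here $|\nabla_\alpha|:=(-\Delta_\alpha)^{1/2}$. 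I would first verify $\tfrac{d}{dt}H=0$ for smooth solutions by direct differentiation (the cross-term contributions from the Schr\"odinger and wave equations cancel exactly thanks to $\Delta_\beta=c\Delta_\alpha$) and then extend the conservation to $H^{1,0}$ solutions by smooth approximation using the continuous dependence from Theorem~\ref{thm_main}.

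The next step is to show that $H$ controls the full $H^{1,0}$ norm once $\|u_0\|_{L^2}$ is small. The only sign-indefinite summand is the cross term, which I estimate via the 2d Gagliardo-Nirenberg inequality $\|u\|_{L^4}^2\lec \|u\|_{L^2}\|\nabla u\|_{L^2}$ together with Young's inequality and mass conservation:
\begin{equation*}
\Big|\int _{\Tg ^2}n|u|^2\,dx\Big|\le \|n\|_{L^2}\|u\|_{L^4}^2\lec \|u_0\|_{L^2}\|n\|_{L^2}\|\nabla u\|_{L^2}\le \tfrac{1}{4c}\|n\|_{L^2}^2+C\|u_0\|_{L^2}^2\|\nabla u\|_{L^2}^2.
\end{equation*}
When $\|u_0\|_{L^2}$ is small enough that $C\|u_0\|_{L^2}^2<\tfrac{1}{2\lambda}$, this cross term is absorbed into the positive-definite part of $H$, giving $H(u,n)\gec \|\nabla u\|_{L^2}^2+\|n\|_{L^2}^2+\||\nabla_\alpha|^{-1}\p_t n\|_{L^2}^2$ uniformly in $t$. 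To pass from $\||\nabla_\alpha|^{-1}\p_t n\|_{L^2}$ to $\|\p_t n\|_{H^{-1}}$ I invoke the zero-mean hypothesis $\hhat{n_1}(0)=0$: projecting the wave equation onto the zero Fourier mode gives $\p_t^2\hhat{n}(t,0)=0$ (since both $\Delta_\alpha n$ and $\Delta_\beta|u|^2$ annihilate the zero mode), so $\hhat{\p_t n}(t,0)=\hhat{n_1}(0)=0$ for all $t$, and the two norms on $\p_t n$ become equivalent. This produces a time-uniform a priori $H^{1,0}$ bound, and the blow-up alternative inherent in the local theory yields global existence.

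I expect the main obstacle to be the rigorous justification of conservation of $H$ at the rough $H^{1,0}$ regularity. Each individual summand of $H$ is continuous on the energy space (the cross term by the same Gagliardo-Nirenberg estimate used above), but the differential identity $\tfrac{d}{dt}H=0$ is a formal computation requiring enough smoothness to integrate by parts. One must therefore approximate the $H^{1,0}$ initial data by smooth ones, apply conservation in the smooth case, and pass to the limit using the continuous-in-time continuous-dependence statement from Theorem~\ref{thm_main}; this is standard but needs to be spelled out carefully.
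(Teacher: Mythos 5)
Your proposal is correct and follows the route the paper indicates (local well-posedness in $H^{1,0}$ from Theorem~\ref{thm_main}, plus mass and Hamiltonian conservation, plus a small-data absorption of the indefinite cross term); the paper only sketches this, so you have essentially filled in the details the author leaves to the reader. Your generalized Hamiltonian is the right one: writing $\be=c\al$, multiplying the Schr\"odinger equation by $\overline{\p_t u}$ and the wave equation by $(-\Delta_\al)^{-1}\p_t n$ produces exactly the stated conserved functional, and the sign hypotheses $\la,c_0>0$, $\al,\be\in\R_+^d$ (hence $c>0$) guarantee that all non-cross terms are positive-definite. The zero-mean step is also the key point the paper signals with the hypothesis $\hhat{n_1}(0)=0$, and your projection argument showing $\hhat{\p_t n}(t,0)\equiv 0$ is correct. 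One small caveat worth recording: on $\Tg^2$ the Gagliardo--Nirenberg inequality reads $\tnorm{u}{L^4}^2\lec \tnorm{u}{L^2}(\tnorm{u}{L^2}+\tnorm{\nabla u}{L^2})$ rather than $\tnorm{u}{L^2}\tnorm{\nabla u}{L^2}$; this only adds a harmless constant of size $\tnorm{u_0}{L^2}^4$ after absorption and does not affect the conclusion. Likewise your smallness condition should really be measured against $\min_j\al_j/\la$ rather than $1/(2\la)$, but since $\tnorm{u_0}{L^2}\ll 1$ this is immaterial.
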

The global existence of solution and the blow-up problem for the Zakharov system on $\T ^2$ will be discussed further in our forthcoming paper~\cite{KiM}.
In particular, it will turn out that we do not have to assume $\hhat{n_1}(0)=0$, and that the assumption $\tnorm{u_0}{L^2}\ll 1$ can be replaced with $\tnorm{u_0}{L^2(\T ^2)}\le \tnorm{Q}{L^2(\R ^2)}$, which is the optimal threshold in the sense that there exists a finite-time blow-up solution starting from an initial datum with $\tnorm{u_0}{L^2}$ greater than but arbitrarily close to $\tnorm{Q}{L^2(\R ^2)}$.
Here, $Q$ denotes the ground state solution to the focusing cubic nonlinear Schr\"odinger equation on $\R^2$.
These results are the periodic counterpart of the results on $\R ^2$ given by Glangetas and Merle~\cite{GM1,GM2}.

The plan of this article is as follows.
In Section~\ref{sec_settings}, we will define the Bourgain spaces and prepare some fundamental estimates.
Using them, we will prove a variety of trilinear estimates in Section~\ref{sec_trilinear}, which will be combined to establish Theorem~\ref{thm_main} in Section~\ref{sec_pf-main}.
Finally, we will give a proof of Theorem~\ref{thm_illp} in Section~\ref{sec_pf-illp}.

In the rest of this section, we take a brief look at our problem and strategies.
Throughout this article, we write $A\lesssim B$ to denote the estimate $A\le CB$ with a constant $C>0$, which may depend on some parameters in a harmless way, and denote $A\lesssim B\lesssim A$ by $A\sim B$.
$A\gg B$ means that $A\ge CB$ with some sufficiently large constant $C>0$.
Also, we use the notation $a+$ or $a-$ for $a\in \R$ to denote $a+\e$ or $a-\e$, respectively, with $\e>0$ arbitrarily small.

Let us first see the difference of our case from the 1d problem treated in \cite{Ta}.
For the sake of simplicity, we assume $c_0=\la =1$ and $\al =\be =(1,\dots ,1)$.
The initial value problem is replaced by the following system of integral equations:
\eqq{
u(t)&=e^{it\Delta}u_0-i\int _0^te^{i(t-t')\Delta}\big[ n(t')u(t')\big] \,dt',\\
n(t)&=\cos (t|\nabla |)n_0+\frac{\sin (t|\nabla |)}{|\nabla |}n_1+\int _0^t\sin ((t-t')|\nabla |)|\nabla |\big[ u(t')\bbar{u(t')}\big] \,dt'.
}

The standard iteration method requires a suitable control of all the iteration term by the initial data.
For instance, the quadratic iteration term for the wave equation:
\eqq{\int _0^t \sin ((t-t')|\nabla |)|\nabla |\big[ e^{it'\Delta}u_0\cdot \bbar{e^{it'\Delta}u_0}\big] \,dt'}
would be controlled in $H^l$ by the $H^s$ norm of initial data $u_0$ if we could obtain a local solution by the iteration method in $H^{s,l}$.
The Fourier coefficient of it at $k$ is calculated as
\begin{align}
&c\int _0^t \sin (|k|(t-t'))|k|\frac{1}{|\ga |}\sum _{k'\in \Zg ^d}e^{-i|k-k'|^2t'}\hhat{u_0}(k-k')e^{i|k'|^2t'}\bbar{\hhat{u_0}(-k')}\,dt'\notag \\[-5pt]
&=c|k|\sum _{\sgm =\pm 1}\sgm e^{i\sgm |k|t}\frac{1}{|\ga |}\sum _{k'\in \Zg ^d}\hhat{u_0}(k-k')\bbar{\hhat{u_0}(-k')}\int _0^te^{i\{ -\sgm |k|-|k-k'|^2+|k'|^2\} t'}dt',\label{second}
\end{align}
where $|\ga |:=\ga _1\ga _2\dots \ga _d$.
The term $|k|$ in \eqref{second} indicates one derivative loss.
However, note that the integral in \eqref{second} is bounded by $2/M$, where
\eqq{M:=\big| -\sgm |k|-|k-k'|^2+|k'|^2\big| .}
If $M$ is sufficiently large, then we can cancel (some of) the derivative loss $|k|$ with this integral (\emph{non-resonant} case).
On the other hand, if $M$ is small (especially if $M\lec 1$), we can gain no derivative to cancel $|k|$ with (\emph{resonant} case).
As we will see in the proof of Theorem~\ref{thm_illp2}, the resonance phenomenon in the quadratic iteration term plays an essential role in determining the optimal regularity for the iteration method.

In 1d, $M$ can be rewritten as
\eqq{M&=|k|\Big| -\sgm +|2k'-k| \mathrm{sgn}\big( k(2k'-k)\big) \Big| \\
&=\ga ^{-1}|k|\Big| -\sgm\ga +\ga |2k'-k| \mathrm{sgn}\big( k(2k'-k)\big) \Big| .}
Note that the second absolute value of the right hand side can be equal to zero for some $k,k'\in \ga ^{-1}\Bo{Z}$ ($|k|$ large) if and only if $\ga \in \Bo{N}$.
In the case of $\ga \in \Bo{N}$, we consider, for example, $k=2N-1$ and $k'=N$ for an arbitrary $N\in \Bo{N}$, then the resonance will happen when $\sgm =+1$.
On the other hand, when $\ga \not\in \Bo{N}$, we easily verify that
\eqq{M\ge c(\ga )|k|,\qquad c(\ga ):=\ga ^{-1}\mathrm{dist}(\ga ,\Bo{Z})>0}
for any $k,k'$, which means that the integral in \eqref{second} provides enough gain of derivative so that one derivative loss can be totally cancelled out.
In this case, the optimal regularity $H^{0,-1/2}$ given in \cite{Ta} is actually determined by the resonance in the cubic iteration term.

Situation is totally different in our higher dimensional cases, where
\eqq{M=|k|\Big| -\sgm +|2k'-k|\cos \angle (k,2k'-k)\Big| .}
Now, we can make $M\lec 1$ with some $|k|\gg 1$ by exploiting flexibility of the angle between $k$ and $2k'-k$, even if $\ga _j\not\in \Bo{N}$.
In fact, for an arbitrary (large) $N\in \Bo{N}$, set
\eqq{k=(\frac{2N-1}{\ga _1},-\frac{1}{\ga _2},0,\dots ,0),\quad  k'=(\frac{N}{\ga _1},\frac{n-1}{\ga _2},0,\dots ,0)}
with $n\in \Bo{Z}$ to be chosen momentarily.
A direct calculation shows that
\eqq{M=\bigg| -\sgm \sqrt{\frac{(2N-1)^2}{\ga _1^2}+\frac{1}{\ga _2^2}}+\frac{2N-1}{\ga _1^2}+\frac{1}{\ga _2^2}-\frac{2n}{\ga _2^2}\bigg| .}
Then, for any $\ga$ and fixed $N$ we can choose $n\in \Bo{Z}$ so that $M\le \ga _2^{-2}$.
Therefore, resonance can happen for any period $\ga$ in higher dimensional cases, as suggested in our theorems.

We finally explain our strategy to prove the crucial nonlinear estimates.
Following an approach taken in~\cite{BHHT} for the case of $\R ^2$, we will use the Bourgain spaces of $\ell ^1$-Besov type with respect to modulations as the spaces for iteration (see Section~\ref{sec_settings} for its definition).
This structure, originally introduced in~\cite{Tat}, easily reduces the nonlinear estimates to the corresponding estimates for functions restricted dyadically with respect to the size of frequency and modulation.
These ``block estimates,'' as systematically treated in \cite{T01}, will be shown via the Cauchy-Schwarz inequality and the estimate on the amount of contributing frequency pairs in the nonlinear interaction between considered frequency blocks.
In some cases, as done in \cite{BHHT}, we have to employ some finer decompositions of functions with respect to the angle of frequency.
Then, all the estimates will be combined to yield estimates on functions with no restriction, via the Cauchy-Schwarz inequality together with the $\ell ^1$-Besov nature of the spaces and some orthogonality properties.

In the nonperiodic case, the total volume of contributing frequencies in each ``block estimate'' can be often measured simply by the Jacobian determinant of an appropriate change of variables; see \cite{CDKS}, for instance.
The reason behind it is that the volume is a continuous quantity.
In fact, similar results are expected for the problem on the mixed space $\R \times \T$; see \mbox{e.g.} \cite{TaTz} for a result of this direction.

In the purely periodic case, however, the total number of contributing frequencies is discrete and the change of variables argument can be applicable only in the restricted situations.
This is closely related to the less dispersive nature of the periodic problem; recall that the local smoothing estimate for the linear Schr\"odinger evolution, which gains half a derivative in space, totally fails in the periodic setting.
Hence, the ``block estimates'' for the $\R^2$ case \cite{BHHT} cannot be extended to the $\T^2$ case in any obvious manner, indeed our result on $\T^2$ is half a regularity worse than the result on $\R^2$ but optimal in the sense of Theorem~\ref{thm_illp}.

We will obtain sharp upper bounds on the total number of contributing frequencies through careful geometric observations and orthogonality arguments.
As mentioned above, in the case of stronger resonance (\mbox{i.e.} smaller value of $M$) we can only expect weaker nonlinear smoothing effect.
Consider, for example, the nonlinear interaction in which the wave frequency $k$ is produced by two Schr\"odinger frequencies $k-k'$, $k'$ satisfying the relation
\[ M=\big| -\sgm |k|-|k-k'|^2+|k'|^2\big| \sim L\]
for some $L\ll |k|$ (thus the nonlinear smoothing effect is not as strong as one derivative).
We easily see that $|k|\lec |k-k'|\sim |k'|$.
In addition, the above implies the following two relations:
\begin{gather}
\big| |k-k'|-|k'|\big| \lec \frac{L+|k|}{|k-k'|+|k'|}\lec 1,\label{resonance1}\\
\frac{k}{|k|}\cdot k'=\frac{|k|^2+\sgm |k|}{2|k|}+O(\frac{L}{|k|}).\label{resonance2}
\end{gather}
From \eqref{resonance1} we see that the difference between sizes of two Schr\"odinger frequencies is always bounded by $1$, which (by an orthogonality argument) allows us to restrict both frequencies to an annulus of thickness $\lec 1$ centered at the origin.
On the other hand, \eqref{resonance2} shows that $k'$ is confined to some plate-like region of thickness $\sim \frac{L}{|k|}\lec 1$ for fixed $k$.
As a consequence, for fixed $k$, the Schr\"odinger frequency $k'$ which contributes through the nonlinear interaction is confined to a small region described as the intersection of a thin annulus and a thin plate.
We will carefully count the number of such $k'$, which will turn out to be small enough to countervail the lack of smoothing effect, obtaining the desired nonlinear estimates.

We also note that the difference of $\frac{1}{2}$ regularity between results on $\R^2$ and $\T^2$ is due to the estimates for (nearly) resonant frequencies; Propositions~\ref{prop_te_highhigh-m}, \ref{prop_te_highhigh-l}, and \ref{prop_te_highhigh-l_2d} below.
In fact, these estimates require $\frac{1}{2}$ more derivative compared to the $\R^2$ case; Propositions 4.4, 4.6, and 4.7 in \cite{BHHT}.
If we consider instead the ``high-low interaction'' case $|k-k'|\ll |k'|$ or $|k-k'|\gg |k'|$, where it holds $M\sim |k|^2$ (thus non-resonant), then we can obtain an estimate for $\T^2$ similar to (even better than) that for $\R^2$; compare Proposition~\ref{prop_te_highlow2} below with Proposition~4.8 in \cite{BHHT}.

We conclude this section by making one more remark.
In the multidimensional periodic setting, some number theoretic arguments are often employed in counting the number of lattice points belonging to a particular frequency region.
One of such tricks are the following estimate:
\eq{est_nt}{\# \Shugo{k\in \Bo{Z}^d}{N\le |k|^2<N+1}\lec N^{\frac{d-2}{2}}\exp \frac{c\log N}{\log \log N},\qquad d\ge 2,\quad N\gg 1,}
which has been repeatedly used since the work of Bourgain~\cite{B0}.
It seems not so easy, however, to harmonize the estimate of this type with our geometric considerations.
In the proof of the nonlinear estimates we will never quote such number theoretic tricks.
Instead, when counting the number of frequencies, we will simply use the fact that there are at most $O(1)$ (depending on $\ga$, $d$) lattice points of $\Zg ^d$ in any ball $\subset \R ^d$ of unit size.
Hence, our argument can be applied to the case of any spatial period $\ga$.
Note that the above estimate \eqref{est_nt} is not known if we replace $\Bo{Z}^d$ with ``irrational tori'' $\Zg ^d$, as mentioned in \cite{B07}.
Some Strichartz-type inequalities related to the Schr\"odinger equation on irrational tori were also obtained in \cite{B07}.


\section{Settings, Preliminaries}\label{sec_settings}

We turn to the details of the well-posedness theory.
For the reader's convenience, some of the notations introduced below are the same as those used in \cite{BHHT}.

It is convenient to reduce the original Zakharov system to a first-order system by putting
\[ w:=n+i\LR{\nabla}^{-1}\p _t n,\qquad w_0:=n_0+i\LR{\nabla}^{-1}n_1.\]
Here, $\LR{\nabla}$ denotes the spatial Fourier multiplier corresponding to $\LR{k}$.
The new system is then given by
\begin{equation}\label{Z'}
\left\{
\begin{array}{@{\,}r@{\;}l}
i\p _tu+ \Delta u&=\frac{\la }{2}(w+\bar{w})u,\qquad (t,x)\in [-T,T] \times \Tg ^d,\\[5pt]
i\p _tw-\LR{\nabla}w&=-\LR{\nabla}^{-1}\Delta _\be (|u|^2)-\LR{\nabla}^{-1}\frac{w+\bar{w}}{2},\\[5pt]
(u,w)\big| _{t=0}&=(u_0,w_0)\in H^{s}(\Tg ^d;\Bo{C})\times H^{l}(\Tg ^d;\Bo{C}).
\end{array}
\right.
\end{equation}
Note that we have normalized constants as $\al =(1,\dots ,1)$, $c_0=1$.
Since $n$ is real-valued, we can recover the solution to \eqref{Z} from a given solution $(u,w)$ to \eqref{Z'} by letting $n$ be the real part of $w$.
The following is the corresponding system of integral equations.
\eq{IE}{u(t)&=e^{it\Delta}u_0-\frac{i\la }{2}\int _0^te^{i(t-t')\Delta}\big[ (w+\bar{w})u\big] (t')\,dt',\\
w(t)&=e^{-it\LR{\nabla}}w_0+i\int _0^t e^{-i(t-t')\LR{\nabla}}\big[ \frac{\Delta _\be}{\LR{\nabla}}(u\bar{u})+\frac{1}{2\LR{\nabla}}(w+\bar{w})\big] (t')\,dt'.}

As we have seen in Section~\ref{sec_intro}, serious resonances can occur without regard to the period $\ga$ in the higher dimensional cases.
However, most of the frequency pairs $(k,k')$ are non-resonant.
In establishing nonlinear estimates, it will be important to make best use of the nonlinear smoothing effect which comes from the oscillations of solutions and nonlinear interactions between such non-resonant frequency pairs.
It is well-known that the Bourgain norms are very well fit for this purpose.

\begin{defn}[Littlewood-Paley decomposition]\label{def_LPdec}
Let $\eta \in C_0^\I (\R )$ be an even function with the properties
\eqq{\eta \equiv 1\hx \text{on}\hx [-1,1],\quad \Supp{\eta}{(-2,2)},\quad 0\le \eta \le 1.}
Define a partition of unity on $\R$, $\eta _N$ for dyadic $N\ge 1$, by
\eqq{\eta _1:=\eta ,\qquad \eta _N(r):=\eta (\tfrac{r}{N})-\eta (\tfrac{2r}{N}),\quad N\ge 2.}
Define the frequency localization operator $P_N$ on functions $\fy :\Tg ^d\to \Bo{C}$ by
\eqq{\F _x(P_N\fy )(k):=\eta _N(|k|)\hhat{\fy}(k).}
We also use the notation $P_N$ to denote the operator on functions in $(t,x)$,
\eqq{\F _{x}(P_Nu)(t,k):=\eta _N(|k|)\hhat{u}(t,k).}
Also, define the operators $Q_L^S$, $Q_L^{W_\pm}$ on spacetime functions by
\eqs{\F _{t,x}(Q_L^Su)(\tau ,k):=\eta _L(\tau +|k|^2)\ti{u}(\tau ,k),\quad \F _{t,x}(Q_L^{W_\pm}w)(\tau ,k):=\eta _L(\tau \pm |k|)\ti{w}(\tau ,k)}
for dyadic numbers $L\ge 1$.
We will write $P^S_{N,L}=P_NQ^S_L$, $P^{W_{\pm}}_{N,L}=P_NQ^{W_{\pm}}_L$ for brevity.
Finally, we define several dyadic frequency regions:
\eqs{\FR{P}_1:=\Shugo{(\tau ,k)}{|k|\le 2},\quad \FR{P}_N:=\Shugo{(\tau ,k)}{\tfrac{N}{2}\le |k|\le 2N},\quad N\ge 2,\\
\FR{S}_1:=\Shugo{(\tau ,k)}{|\tau +|k|^2|\le 2},\quad \FR{S}_L:=\Shugo{(\tau ,k)}{\tfrac{L}{2}\le |\tau +|k|^2|\le 2L},\quad L\ge 2,\\
\FR{W}^\pm _1:=\Shugo{(\tau ,k)}{|\tau \pm |k||\le 2},\quad \FR{W}^\pm _L:=\Shugo{(\tau ,k)}{\tfrac{L}{2}\le |\tau \pm |k||\le 2L},\quad L\ge 2.}
In what follows, capital letters $N$ and $L$ are always used to denote dyadic numbers $\ge 1$.
We will often use these capital letters with various subscripts, and also the notation
\eqq{\overline{N}_{ij\dots}:=\max \shugo{N_i,N_j,\dots},\qquad \underline{N}_{ij\dots}:=\min \shugo{N_i,N_j,\dots}.}
The following will be used for the specific indices;
\eqq{N_{\max}:=\overline{N}_{012},\quad N_{\min}:=\underline{N}_{012},\qquad L_{\max}:=\overline{L}_{012},\quad L_{\min}:=\underline{L}_{012}.}
\end{defn}

\begin{defn}[Bourgain spaces]
For $s,b\in \R$ and $1\le p<\I$, define the Bourgain space for the Schr\"odinger equation $X^{s,b,p}_S$ and that for reduced wave equations $X^{s,b,p}_{W_{\pm}}$ by the completion of functions $C^\I$ in space and Schwartz in time with respect to
\eqq{\norm{u}{X^{s,b,p}_S}&:=\norm{\norm{\shugo{N^sL^b\norm{P_{N,L}^Su}{L^2_{t,x}(\R \times \Tg ^d)}}_{N,L}}{\ell ^p_L}}{\ell ^2_N},\\
\norm{u}{X^{s,b,p}_{W_{\pm}}}&:=\norm{\norm{\shugo{N^sL^b\norm{P_{N,L}^{W_{\pm}}u}{L^2_{t,x}(\R \times \Tg ^d)}}_{N,L}}{\ell ^p_L}}{\ell ^2_N}.}
We also define the Bourgain space for the wave equation $X^{s,b,p}_W$ by setting
\[ Q^W_L:=\F ^{-1}_{\tau ,k}\eta _L(|\tau |-|k|)\F _{t,x}\]
and replacing $W_\pm$ with $W$ in the above definition of $X^{s,b,p}_{W_{\pm}}$.
For $T>0$, define the restricted space $X^{s,b,p}_*(T)$ ($*=S$ or $W_{\pm}$ or $W$) by the restrictions of distributions in $X^{s,b,p}_*$ to $(-T,T)\times \Tg ^d$, with the norm
\eqq{\norm{u}{X^{s,b,p}_*(T)}:=\inf \Shugo{\norm{U}{X^{s,b,p}_*}}{U\in X^{s,b,p}_*~\text{is an extension of $u$ to $\R \times \Tg ^d$}}.}
\end{defn}

Theorem~\ref{thm_main} is then precisely stated as follows.
\begin{thm}\label{thm_main2}
Let $d\ge 2$, $\la ,c_0,\al,\be,\ga$ be any constants as \eqref{constants}, and let $(s,l)\in \R ^2$ satisfy \eqref{hani_3d} or \eqref{hani_2d}.
Then, for any $r>0$, there exists a time $T\gec \min \shugo{1,r^{-2-}}$ such that for any initial data $(u_0,n_0,n_1)$ in $H^{s,l}$ with norm less than $r$, there exists a unique solution $(u,n)$ to \eqref{Z} in the class
\[ (u,n)\in X^{s,\frac{1}{2},1}_S(T)\times X^{l,\frac{1}{2},1}_W(T),\qquad \p _tn\in X^{l-1,\frac{1}{2},1}_W(T),\]
which is continuously embedded into $\mathcal{C}([-T,T];H^{s,l})$.
Moreover, the map $(u_0,n_0,n_1)\mapsto (u,n)$ is Lipschitz continuous as a map from the ball in $H^{s,l}$ into the class defined above.
\end{thm}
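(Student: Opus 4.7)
The plan is to solve the reduced first-order system \eqref{Z'} by a Picard iteration for the integral form \eqref{IE} in the Banach space
\begin{equation*}
Y^{s,l}(T):=X^{s,1/2,1}_S(T)\times X^{l,1/2,1}_{W_+}(T),
\end{equation*}
and then read off the solution of the original system \eqref{Z} via $n=\mathrm{Re}\,w$, $\p _tn=\LR{\nabla}\mathrm{Im}\,w$. Uniqueness inside the iteration class and Lipschitz dependence on the data are automatic consequences of the contraction-mapping scheme applied to differences.

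On the linear side I would record three facts in the $\ell ^1_L$-Besov Bourgain framework: (L1) the homogeneous bound $\norm{\psi (t/T)e^{it\Delta}u_0}{X^{s,1/2,1}_S}\lec \norm{u_0}{H^s}$ and its half-wave analogue for $w_0$; (L2) the Duhamel bound
\begin{equation*}
\Bigl\|\psi (t/T)\int _0^te^{i(t-t')\Delta}F(t')\,dt'\Bigr\|_{X^{s,1/2,1}_S}\lec T^{\theta}\norm{F}{X^{s,-1/2+\theta ,1}_S}
\end{equation*}
for small $\theta\ge 0$, together with its wave analogue; and (L3) the embedding $X^{s,1/2,1}_S(T)\hookrightarrow C([-T,T];H^s(\Tg ^d))$ (and similarly for $W_\pm$). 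It is precisely for the sake of (L3) that one uses the $\ell ^1$ summation in the modulation variable at the endpoint $b=1/2$.

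On the nonlinear side, the trilinear estimates of Section~\ref{sec_trilinear} provide, under the conditions \eqref{hani_3d}--\eqref{hani_2d},
\begin{equation*}
\norm{wu}{X^{s,-1/2,1}_S}+\norm{\bar w\,u}{X^{s,-1/2,1}_S}\lec \norm{w}{X^{l,1/2,1}_{W_+}}\norm{u}{X^{s,1/2,1}_S}
\end{equation*}
(the conjugate term being handled via the symmetry swapping $W_+$ and $W_-$) and
\begin{equation*}
\bigl\|\LR{\nabla}^{-1}\Delta _\be(u_1\bar u_2)\bigr\|_{X^{l,-1/2,1}_{W_+}}\lec \norm{u_1}{X^{s,1/2,1}_S}\norm{u_2}{X^{s,1/2,1}_S},
\end{equation*}
while the lower-order linear coupling $\tfrac{1}{2}\LR{\nabla}^{-1}(w+\bar w)$ in the wave equation gains a full derivative and is absorbed with a factor $T^{1-}$. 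Combining these with (L2) yields, on a ball $B_R\subset Y^{s,l}(T)$ of radius $R\sim r$,
\begin{equation*}
\norm{\Phi (u,w)}{Y^{s,l}(T)}\lec r+T^{\theta}R^2+T^{1-}R,
\end{equation*}
together with an analogous Lipschitz bound on differences, so that the choice $T\sim \min \shugo{1,r^{-2-}}$ makes the Picard map $\Phi$ a contraction on $B_R$ and establishes Theorem~\ref{thm_main2}.

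The principal obstacle I anticipate is the extraction of an honest small time factor $T^{\theta}$ at the modulation endpoint $b=1/2$: direct H\"older-in-time arguments lose at this endpoint, and the familiar $X^{s,b}$ trick of taking $b>1/2$ is unavailable. The $\ell ^1_L$-Besov structure circumvents this obstruction by allowing interpolation between the endpoint trilinear estimates of Section~\ref{sec_trilinear} and their slightly subcritical counterparts (obtained by replacing $-\tfrac{1}{2}$ with $-\tfrac{1}{2}+\theta$ on the output side), which delivers the required $T^\theta$ with $\theta>0$ small. Once this time-localization bookkeeping is in place, the remaining steps are a standard application of the contraction mapping theorem, and the passage from $(u,w)$ to $(u,n,\p _tn)$ is immediate from (L3).
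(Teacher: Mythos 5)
Your plan matches the paper's: reduce to \eqref{Z'} and run a contraction in $X^{s,\frac{1}{2},1}_S(T)\times X^{l,\frac{1}{2},1}_{W_+}(T)$ using the linear bounds of Lemma~\ref{lem_linear} and the bilinear estimates of Proposition~\ref{prop_be} (built from the trilinear estimates of Section~\ref{sec_trilinear}), absorbing the lower-order coupling $\LR{\nabla}^{-1}(w+\bar w)$ with a full power of $T$, and then passing to $(u,n,\p_tn)$ via $n=\mathrm{Re}\,w$. One caveat: to obtain the stated local existence time $T\gec\min\shugo{1,r^{-2-}}$ you in fact need the time factor to be $T^{\frac{1}{2}-}$, not merely $T^{\theta}$ with $\theta$ ``small''; as you anticipate, this is delivered by using the Duhamel and stability estimates \eqref{est_lin_duh}, \eqref{est_stability} with $b$ slightly below $\frac{1}{2}$, which the $\ell^1$-Besov modulation structure permits because the trilinear bounds of Section~\ref{sec_trilinear} leave a strictly positive margin in the modulation exponents.
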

To prove Theorem~\ref{thm_main2}, it suffices to show similar statements on the reduced system \eqref{Z'}; see \cite{BHHT} for details.
\begin{thm}\label{thm_main3}
Let $d\ge 2$, $\la \in \mathbb{C}\setminus \shugo{0}$, $\be \in \R ^d\setminus \shugo{(0,\dots ,0)}$, and let $\ga \in \R _+^d$ be any period.
Assume that $(s,l)\in \R ^2$ satisfies \eqref{hani_3d} or \eqref{hani_2d}.
Then, for any $r>0$, there exists a time $T\gec \min \shugo{1,r^{-2-}}$ such that for any initial data $(u_0,w_0)\in H^s\times H^l$ with its norm less than $r$, there exists a unique solution $(u,w)\in X^{s,\frac{1}{2},1}_S(T)\times X^{l,\frac{1}{2},1}_{W_+}(T)$ to \eqref{Z'}.
Moreover, the map $(u_0,w_0)\mapsto (u,w)$ is Lipschitz continuous as a map from the ball in $H^s\times H^l$ into $X^{s,\frac{1}{2},1}_S(T)\times X^{l,\frac{1}{2},1}_{W_+}(T)$.
\end{thm}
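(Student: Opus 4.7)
The plan is to prove Theorem~\ref{thm_main3} by a standard Picard iteration on the integral equations \eqref{IE} in the product Bourgain space $X^{s,1/2,1}_S(T)\times X^{l,1/2,1}_{W_+}(T)$. Let $\Phi (u,w)$ denote the right-hand side of \eqref{IE} (viewed as a map on pairs $(u,w)$), suitably truncated in time by a smooth cutoff $\psi _T(t)=\psi (t/T)$. I will show that, for $T=T(r)\gec \min \shugo{1,r^{-2-}}$ small enough, $\Phi$ is a contraction on the closed ball of radius $Cr$ in the above product space. Contraction then yields a unique fixed point, continuous dependence on data is automatic from the Lipschitz estimate, and persistence of regularity $(u,w)\in C([-T,T];H^s\times H^l)$ follows from the continuous embedding $X^{*,1/2,1}_*(T)\hookrightarrow C([-T,T];H^{*})$ (which is one of the fundamental features of the $\ell^1$-Besov structure).

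The two ingredients I need are: (i) linear estimates in the Bourgain norms, namely
\eqq{\tnorm{\psi _T(t)e^{it\Delta}u_0}{X^{s,1/2,1}_S}\lec \tnorm{u_0}{H^s},\quad \tnorm{\psi _T(t)e^{-it\LR{\nabla}}w_0}{X^{l,1/2,1}_{W_+}}\lec \tnorm{w_0}{H^l},}
together with the Duhamel estimates
\eqq{\Big\| \psi _T(t)\int _0^te^{i(t-t')\Delta}F(t')\,dt'\Big\| _{X^{s,1/2,1}_S}\lec \tnorm{F}{X^{s,-1/2,1}_S},}
and the analogous bound for the $W_+$-equation, plus a small gain $T^{0+}$ obtained by using $X^{s,1/2-,1}_S$ (or a standard $b<1/2$ argument) in intermediate steps; and (ii) the trilinear estimates
\begin{gather*}
\tnorm{(w+\bbar{w})u}{X^{s,-1/2,1}_S}\lec \tnorm{u}{X^{s,1/2,1}_S}(\tnorm{w}{X^{l,1/2,1}_{W_+}}+\tnorm{\bbar{w}}{X^{l,1/2,1}_{W_-}}),\\
\Big\| \frac{\De _\be}{\LR{\nabla}}(u_1\bbar{u_2})\Big\| _{X^{l,-1/2,1}_{W_+}}\lec \tnorm{u_1}{X^{s,1/2,1}_S}\tnorm{u_2}{X^{s,1/2,1}_S},
\end{gather*}
with the remaining linear terms $\LR{\nabla}^{-1}(w+\bbar{w})$ being trivial. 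The conjugate $\bbar{w}$ is estimated in $X^{l,1/2,1}_{W_-}$ since complex conjugation swaps $W_+\leftrightarrow W_-$, and the analysis is symmetric between the two.

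The main obstacle, and essentially the content of the paper, is the trilinear estimates, which will be carried out in Section~\ref{sec_trilinear}. The approach is to reduce them to dyadic block estimates on $P^S_{N_0,L_0}$, $P^{W_\pm}_{N_1,L_1}$, $P^S_{N_2,L_2}$-localized pieces by the $\ell^1$-Besov structure, and then to sum the resulting block bounds over $(N_j,L_j)$. For each block, duality reduces the estimate to an $L^2$-bound on a convolution sum, which in turn reduces via Cauchy--Schwarz to counting the lattice points in $\Zg^d$ that contribute to a given output frequency and modulation. In the generic (non-resonant) regime where $M=|\tau _0+|k|^2|\vee |\tau _1\pm |k_1||\vee |\tau _2+|k_2||$ is large, one has sufficient smoothing from the resonance function to close the estimates by elementary counting, using only that $\Zg^d$ has $O(1)$ points in any unit ball, thereby avoiding any number-theoretic input like \eqref{est_nt}. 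The genuinely difficult regime is the (nearly) resonant high-high to low/high interaction, where both \eqref{resonance1} and \eqref{resonance2} must be exploited: one frequency is confined to the intersection of a unit-thickness annulus and a thin plate, and one needs sharp geometric bounds on this intersection. This is the step that forces the $\frac{1}{2}$ loss relative to the $\R^2$ analysis of \cite{BHHT} and yields precisely the thresholds in \eqref{hani_3d}--\eqref{hani_2d}.

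Once the trilinear estimates are in place, the contraction argument is routine: the nonlinear map satisfies $\tnorm{\Phi (u,w)}{}\le C(r+T^{0+}(r+r^2))$ on the ball of radius $Cr$, and similarly $\Phi$ is Lipschitz with constant $\le CT^{0+}(r+1)$. Choosing $T\sim r^{-2-}$ (for $r\ge 1$; $T\sim 1$ for $r\le 1$) makes $\Phi$ a $\frac{1}{2}$-contraction, giving the unique fixed point and the Lipschitz dependence claimed in Theorem~\ref{thm_main3}.
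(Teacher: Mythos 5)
Your proposal is correct and follows essentially the same route as the paper: a Picard iteration in $X^{s,\frac{1}{2},1}_S(T)\times X^{l,\frac{1}{2},1}_{W_+}(T)$, built from the linear estimates of Lemma~\ref{lem_linear} and the bilinear/trilinear estimates of Proposition~\ref{prop_be} (which are in turn reduced to dyadic block estimates proved by Cauchy--Schwarz and lattice-point counting, with the resonant annulus-$\cap$-plate geometry of Lemma~\ref{lem_annulustrip} as the key tool, and with conjugation handled by swapping $W_+\leftrightarrow W_-$). One small imprecision: to realize the claimed $T\gec \min\shugo{1,r^{-2-}}$ you need the gain from the Duhamel/cutoff estimates to be a full $\de^{\frac{1}{2}-}$ (as in \eqref{est_lin_duh}--\eqref{est_stability} with $b<\frac12$), not merely an unspecified $T^{0+}$; otherwise the bound on the existence time degrades, though the local well-posedness conclusion is unaffected.
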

Clearly, we call $(u,n)$ a solution to \eqref{Z} if $(u,n+i\LR{\nabla}^{-1}\p _tn)$ is a solution to the integral equation \eqref{IE} after the normalization of constants.
The proof of Theorem~\ref{thm_main3} will be given in Section~\ref{sec_pf-main} with some linear and bilinear estimates.

In the rest of this section, we prepare some preliminary lemmas.
The following is a periodic analog of a bilinear refinement of $L^4$-Strichartz estimate in the $\R ^d$ case as well as similar estimates for the Schr\"odinger-wave interactions (\cite{BHHT,B98}).
Here and in the sequel we write $\ze =(\tau ,k)$ and $\int _\ze\cdots =\int _{\tau \in \R}\frac{1}{|\ga |}\sum _{k\in \Zg ^d}\cdots$.

\begin{lem}[Bilinear Strichartz estimates]\label{lem_bs}
Let $d\ge 2$ and $N_j, L_j\ge 1$ ($j=0,1,2$) be dyadic numbers.

(i) Suppose that $u_1,u_2\in L^2(\R \times \Tg ^d)$ satisfy
\eqq{\Supp{\ti{u_1}}{\FR{P}_{N_1}\cap \FR{S}_{L_1}},\qquad \Supp{\ti{u_2}}{\FR{P}_{N_2}\cap \FR{S}_{L_2}}.}
We also assume $N_0\ge 2$.
Then we have
\eqq{\norm{\ti{u_1\bbar{u_2}}}{L^2_{\ze}(\FR{P}_{N_0})}\lec \underline{L}_{12}^{\frac{1}{2}}\Big( \frac{\overline{L}_{12}}{N_0}+1\Big) ^{\frac{1}{2}}N_{\min}^{\frac{d-1}{2}}\norm{u_1}{L^2_{t,x}}\norm{u_2}{L^2_{t,x}}.}

(ii) Suppose that $u,w\in L^2(\R \times \Tg ^d)$ satisfy
\eqq{\Supp{\ti{w}}{\FR{P}_{N_0}\cap \FR{W}^{\pm}_{L_0}},\qquad \Supp{\ti{u}}{\FR{P}_{N_1}\cap \FR{S}_{L_1}}.}
Then we have
\eqq{\norm{\ti{wu}}{L^2_{\ze}(\FR{P}_{N_2})}+\norm{\ti{\bar{w}u}}{L^2_{\ze}(\FR{P}_{N_2})}\lec \underline{L}_{01}^{\frac{1}{2}}\Big( \frac{\overline{L}_{01}}{N_1}+1\Big) ^{\frac{1}{2}}N_{\min}^{\frac{d-1}{2}}\norm{w}{L^2_{t,x}}\norm{u}{L^2_{t,x}}.}
\end{lem}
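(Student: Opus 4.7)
Both estimates arise from the standard scheme for bilinear Strichartz bounds: express the left-hand side via Plancherel as a convolution on the Fourier side, apply Cauchy--Schwarz under the integral sign against the set of $\zeta_1$ where both arguments fall in the given supports, and reduce the problem to a uniform measure (here: lattice-point count) on that ``interaction set''. All factors of the form $\underline{L}^{1/2}$, $(\overline{L}/N+1)^{1/2}$, and $N_{\min}^{(d-1)/2}$ will emerge as the square root of that count.

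For part~(i), fix $\zeta=(\tau,k)$ with $|k|\sim N_0$ and set
\[ E(\zeta):=\Shugo{\zeta_1=(\tau_1,k_1)}{\zeta_1\in \FR{P}_{N_1}\cap \FR{S}_{L_1},~ \zeta_1-\zeta\in \FR{P}_{N_2}\cap \FR{S}_{L_2}}. \]
Since $\widetilde{u_1\bbar{u_2}}(\zeta)=\int_{\zeta_1\in E(\zeta)} \ti{u_1}(\zeta_1)\bbar{\ti{u_2}(\zeta_1-\zeta)}\,d\zeta_1$, a Cauchy--Schwarz in $\zeta_1$ followed by Fubini reduces the bound to $\sup_\zeta |E(\zeta)|\lec \underline{L}_{12}(\overline{L}_{12}/N_0+1)N_{\min}^{d-1}$. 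For fixed $k_1$, the two modulation conditions confine $\tau_1$ to an interval of length $\lec \underline{L}_{12}$. Subtracting the two conditions eliminates $\tau_1$ and yields $|2k\cdot k_1-(|k|^2+\tau)|\lec \overline{L}_{12}$, so $k_1$ lies in a slab of normal $k/|k|$ and thickness $\lec \overline{L}_{12}/N_0$. Intersecting this slab with the annuli $|k_1|\sim N_1$ and $|k-k_1|\sim N_2$ restricts the component $k_1^\perp$ perpendicular to $k$ to a ball of radius $\lec N_{\min}$ (since, with the axial coordinate essentially fixed by the slab, both annulus equations force $|k_1^\perp|\lec N_j$ for $j=1,2$). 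Covering the resulting plate by unit-size cubes and using the fact that each such cube contains $O_\ga(1)$ points of $\Zg^d$ gives the count $\lec (\overline{L}_{12}/N_0+1)N_{\min}^{d-1}$, in which the ``$+1$'' absorbs the case of sub-unit slab thickness.

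For part~(ii), the computation is entirely parallel with combined resonance $|\pm|k_1|+|k-k_1|^2-\tau|\lec \overline{L}_{01}$. Its $k_1$-gradient is $\pm k_1/|k_1|-2(k-k_1)$, of magnitude comparable to $N_1$ on the support, so the slab now has thickness $\lec \overline{L}_{01}/N_1$, producing $N_1$ in the denominator in place of $N_0$. The transverse-radius and lattice-point arguments then proceed as before to give $|E'(\zeta)|\lec \underline{L}_{01}(\overline{L}_{01}/N_1+1)N_{\min}^{d-1}$. The $\bar w u$ case is identical after conjugating $w$ (which only flips the sign $\pm$).

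\textbf{Main obstacle.} The delicate step is the transverse-radius bound $|k_1^\perp|\lec N_{\min}$: both annulus constraints must be combined simultaneously, and in the regime where the smallest of the three frequencies is $N_0$ (or $N_2$ in part~(ii)) one needs a short geometric argument about the intersection of two spherical shells whose centers are separated by $\sim N_{\min}$. A second, ever-present point of care is that in the periodic setting one is counting lattice points rather than measuring Euclidean volume; a naive volume bound would lose the $+1$ term and fail when the slab is thinner than the lattice spacing. Covering by unit boxes and quoting the uniform $O(1)$ bound on $\# (\Zg^d\cap B_1)$ avoids any recourse to number-theoretic estimates such as \eqref{est_nt}, and in particular keeps the constants uniform in the period $\ga$.
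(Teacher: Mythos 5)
Your proof matches the paper's in the case $N_{\min} = \underline{N}_{12}$ in part~(i) (resp.\ $N_{\min}\in\shugo{N_0,N_1}$ in part~(ii)): Cauchy--Schwarz against the interaction set, a slab constraint from subtracting the two modulation conditions, and a unit-ball covering count over the slab--annulus intersection. There your transverse-radius claim $|k_1^\perp|\lec N_j$ for $j=1,2$ does give $|k_1^\perp|\lec \underline{N}_{12}=N_{\min}$, and the argument is sound.

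The gap is exactly where you flag it, but the fix you propose does not work. When $N_{\min}=N_0$ in part~(i) (so $N_0\ll N_1\sim N_2$), the two ball constraints $|k_1|\le 2N_1$, $|k_1-k_0|\le 2N_2$ together with the slab only force $|k_1^\perp|\lec \underline{N}_{12}$, not $\lec N_0$. No ``intersection of two spherical shells'' argument can improve this: the dyadic shells $|k_1|\sim N_1$ and $|k_1-k_0|\sim N_2$ have thickness comparable to their radius, and when their centers are only $\sim N_0\ll N_1$ apart their intersection is essentially an entire annulus of radius $\sim N_1$. One already sees the problem in $d=2$ with $k_0=(2,0)$, $k_1=(1,t)$: here $|k_1|\sim|k_1-k_0|\sim|t|$ while $|k_1^\perp|=|t|$ is arbitrarily large. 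The crude count therefore only gives $(\overline{L}_{12}/N_0+1)\underline{N}_{12}^{d-1}$, which misses the target by a factor $(\underline{N}_{12}/N_0)^{d-1}$.

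The paper recovers $N_0^{d-1}$ by a different mechanism, namely an almost-orthogonality decomposition: split each of $\ti{u_1}$, $\ti{u_2}$ into $O((N_j/N_0)^d)$ pieces frequency-localized to cubes of side $N_0$. Since the output is restricted to $|k_0|\sim N_0$, for each cube $C_1$ carrying $k_1$ only $O(1)$ translated cubes can carry $k_2$, and the resulting products are almost orthogonal; after this reduction $k_1$ ranges over a single $N_0$-cube intersected with the slab, giving the count $(\overline{L}_{12}/N_0+1)N_0^{d-1}$. The same decomposition (into cubes of side $N_2$) is what handles $N_{\min}=N_2$ in part~(ii). Without this orthogonality input the stated bound cannot be reached. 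A minor further omission: in part~(ii) the paper treats the degenerate regimes $N_1\lec 1$ and $\overline{L}_{01}\gec N_1^2$ separately, where the ``slab'' is not thin at scale $\overline{L}_{01}/N_1$ (there the gradient of $\pm|k_0|+|k_0+k_2|^2$ need not be $\sim N_1$, and also the variation of $\pm|k_0|$ itself contributes a width $\lec N_1$ to the annulus); your linearized-gradient argument implicitly assumes $N_1\gg 1$ and $\overline{L}_{01}\ll N_1^2$.
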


\begin{rem}
The implicit constants in the above estimates depend only on $d$ and $\ga$.
Here and in what follows we omit to specify dependence of constants on the dimension or the spatial period.
\end{rem}
\begin{rem}
For (i), a similar estimate was obtained in the case $d=2$ by De~Silva, Pavlovi\'c, Staffilani, and Tzirakis~\cite{dSPST}.
Their result (Proposition~4.6~(a) in \cite{dSPST}) reads in our setting as follows: if moreover $N_1\gg N_2$ or $N_1\ll N_2$ and the period $\ga =(\ga _1,\ga _2)$ satisfies $\ga _1=\ga _2$, then
\eqq{\big( \norm{u_1\bbar{u_2}}{L^2_{t,x}}=\big) ~\norm{u_1u_2}{L^2_{t,x}}\lec L_1^{\frac{1}{2}}L_2^{\frac{1}{2}}\underline{N}_{12}^{0+} \norm{u_1}{L^2_{t,x}}\norm{u_2}{L^2_{t,x}}.}
Compared to this, our estimate, which has a prefactor $L_1^{\frac{1}{2}}L_2^{\frac{1}{2}}(\frac{N_{\min}}{N_0}+\frac{N_{\min}}{\overline{L}_{12}})^{\frac{1}{2}}$ at the cost of restriction of frequency onto $\FR{P}_{N_0}$ and limitation to the specific bilinear form of $u_1\bbar{u_2}$, is verified by a simpler proof and applicable to the case of ``irrational tori,'' and also implies better bound when $\overline{L}_{12}\gg N_{\min}$.
Also, we remark that bilinear Strichartz estimates in \cite{dSPST} were obtained as a corollary of corresponding bilinear estimates for solutions to the linear Schr\"odinger equations, while we directly verify our bilinear estimate without using the estimate for linear solutions.
\end{rem}
\begin{rem}\label{rem_dec}
It will be clear from the proof that in (ii) we do not actually need the restriction of $\ti{wu}$ or $\ti{\bar{w}u}$ onto the dyadic region $\FR{P}_{N_2}$.
If we assume no restriction in $N_2$, the resulting estimate will be
\eqq{\norm{wu}{L^2_{t,x}}+\norm{\bar{w}u}{L^2_{t,x}}\lec \underline{L}_{01}^{\frac{1}{2}}\Big( \frac{\overline{L}_{01}}{N_1}+1\Big) ^{\frac{1}{2}}\underline{N}_{01}^{\frac{d-1}{2}}\norm{w}{L^2_{t,x}}\norm{u}{L^2_{t,x}}.}
\end{rem}

\begin{proof}
(i) Consider the case $N_{\min}=\underline{N}_{12}$ first.
We have
\eqq{&\norm{\ti{u_1\bbar{u_2}}}{L^2_{\ze}(\FR{P}_{N_0})}\sim \norm{\int \ti{u_1}(\ze _1)\bbar{\ti{u_2}}(\ze _1-\ze _0)\,d\ze _1}{L^2_{\ze _0}(\FR{P}_{N_0})}\le \sup _{\ze _0\in \FR{P}_{N_0}}\big| E(\ze _0)\big| ^{\frac{1}{2}}\norm{\ti{u_1}}{L^2_{\ze}}\norm{\ti{u_2}}{L^2_{\ze}}}
by the Cauchy-Schwarz inequality, where
\eqq{E(\tau _0,k_0):=\Shugo{(\tau _1,k_1)\in \FR{P}_{N_1}\cap \FR{S}_{L_1}}{(\tau _1-\tau _0, k_1-k_0)\in \FR{P}_{N_2}\cap \FR{S}_{L_2}}.}
Observe that if $(\tau _1,k_1)\in E(\ze _0)$, then
\eqq{\tau _1\in \big( -|k_1|^2+[-2L_1,2L_1]\big) \cap \big( \tau _0-|k_1-k_0|^2+[-2L_2,2L_2]\big) ,}
which implies
\eqq{\big| -|k_1|^2-(\tau _0-|k_1-k_0|^2)\big| \le 2(L_1+L_2),}
namely, $\tau _0-|k_0|^2+2k_0\cdot k_1=O(\overline{L}_{12})$.
Therefore, using the assumption $N_0\ge 2$, $\big| E(\ze _0)\big|$ is bounded by
\eqq{\underline{L}_{12}\cdot \Big| \Shugo{k_1\in \Zg ^d}{|k_1|\le 2N_1,\,|k_1-k_0|\le 2N_2,\,\frac{k_0}{|k_0|}\cdot k_1=-\frac{\tau _0}{2|k_0|}+\frac{|k_0|}{2}+O(\frac{\overline{L}_{12}}{N_0})}\Big| .}
We see that $k_1$ in the above set should be in a ball of size $O(\underline{N}_{12})$ with its component parallel to $k_0$ confined to an interval of length $O(\frac{\overline{L}_{12}}{N_0})$, for fixed $\ze _0$.
Such a region (in $\R ^d$) is included in the union of at most $O((\frac{\overline{L}_{12}}{N_0}+1)\underline{N}_{12}^{d-1})$ balls with radius $1$, which yields the bound
\eqq{\big| E(\ze _0)\big| \lec \underline{L}_{12}\Big( \frac{\overline{L}_{12}}{N_0}+1\Big) \underline{N}_{12}^{d-1},}
as desired.

If $N_{\min}=N_0$, divide $\ti{u_j}$, $j=1,2$, into $O((\frac{N_j}{N_0})^d)$ functions each of which is frequency localized in a cube of side length $N_0$.
Then the orthogonality admits us to reduce the estimate to the case of each component.
We can follow the above argument to obtain the desired bound.

(ii) Since $\Supp{\ti{\bar{w}}}{\FR{P}_{N_0}\cap \FR{W}^{\mp}_{L_0}}$ if and only if $\Supp{\ti{w}}{\FR{P}_{N_0}\cap \FR{W}^{\pm}_{L_0}}$, it suffices to prove the estimate for $\bar{w}u$.

Consider the case where $N_{\min}=N_0$ or $N_1$.
Similarly to (i), the claimed estimate is reduced to
\eqq{\big| E(\ze _2)\big| \lec \underline{L}_{01}\Big( \frac{\overline{L}_{01}}{N_1}+1\Big) \underline{N}_{01}^{d-1}}
for $\ze _2\in \FR{P}_{N_2}$, where
\eqq{E(\ze _2):=\Shugo{\ze _0\in \FR{P}_{N_0}\cap \FR{W}^\pm _{L_0}}{\ze _0+\ze _2\in \FR{P}_{N_1}\cap \FR{S}_{L_1}}.}
It holds for $(\tau _0,k_0)\in E(\tau _2,k_2)$ that
\eqq{\big| |k_0|-|k_2|\big| \le |k_0+k_2|\le 2N_1,\quad \tau _2\mp |k_0|+|k_0+k_2|^2=O(\overline{L}_{01}),}
which implies
\eqq{\big| E(\ze _2)\big| \lec \underline{L}_{01}\Big| \Shugo{k_0 \in \Zg ^d}{|k_0|\sim N_0,\,N_1^2\gec |k_0+k_2|^2=-\tau _2\pm |k_2|+O(\overline{L}_{01}+N_1)}\Big| .}
When $\overline{L}_{01}\gec N_1^2$ or $N_1\lec 1$, we simply replace the latter subset of $\Zg ^d$ with the set of all lattice points in a ball of radius $O(\underline{N}_{01})$, obtaining the bound
\eqq{\big| E(\ze _2)\big| \lec \underline{L}_{01}\underline{N}_{01}^d\lec \underline{L}_{01}\Big( \frac{\overline{L}_{01}}{N_1}+1\Big) \underline{N}_{01}^{d-1}.}
We thus assume $\overline{L}_{01}\ll N_1^2$ and $1\ll N_1$.
In this case, $k_0$ is confined to the intersection of a ball centered at the origin with radius $O(N_0)$ and an annulus centered at $-k_2$ with a radius of $O(N_1)$ and a width of $O(\frac{\overline{L}_{01}+N_1}{N_1})$.
Such a region (in $\R^d$) can be covered with at most $O(\frac{\overline{L}_{01}+N_1}{N_1}\underline{N}_{01}^{d-1})$ balls of radius $1$, so we reach the desired bound in the same manner.

When $N_{\min}=N_2$, we decompose $w$ and $u$ into functions frequency-supported on a cube of size $N_2$ and reduce the estimate to that for each component, then the above proof is valid with some trivial modification.
\end{proof}

Lemma~\ref{lem_bs} will be used mainly in the case of high-modulation interactions, namely $L_{\max}\gec N_{\max}^2$, where the nonlinear interactions supply enough smoothing effect.
For the lower modulation cases, however, we will have to count more carefully the number of lattice points in a specific region.
In many cases, as we have seen in Section~\ref{sec_intro}, such a region will be described as the intersection of a thin annulus and a thin plate.
Thus, we prepare the following lemma.

\begin{lem}[Bound of lattice points]\label{lem_annulustrip}
Let $d\ge 2$, $N\gg 1$, $N^{-1}\le \mu ,\nu \ll N$, $X\ge 0$, and
\eqq{D:=\Shugo{\xi =(\xi _1,\xi ')\in \R \times \R ^{d-1}}{N\le |\xi |\le N+\mu ,\,X\le \xi _1\le X+\nu}.}
Then, denoting by $\Sc{R}$ an arbitrary rotation operator on $\R ^d$, we have the following estimates for any $1\le N_0\lec N$ and any ball  $B_{N_0}\subset \R ^d$ with radius $N_0$.

(i) $\# (\Zg ^d\cap \Sc{R}(D\cap B_{N_0})) \lec \max \shugo{\nu,1}N_0^{d-2}\big[ N(\mu +\min \shugo{\nu ,1})\big] ^{\frac{1}{2}}$. 

(ii) In addition, let $\xi _0:=(1,0,\dots ,0)\in \R^d$ and
\eqq{K_\th :=\Shugo{\xi \in \R^d}{\tfrac{\th}{2}\le \angle (\xi ,\xi _0)\le 2\th}}
for $(\frac{\mu +\min \shugo{\nu ,1}}{N})^{\frac{1}{2}}\ll \th \le \frac{\pi}{4}$.
Then, we have
\eqq{\# (\Zg ^d\cap \Sc{R}(D\cap B_{N_0}\cap K_\th ))\lec \max \shugo{\nu,1}\min \shugo{N\th ,N_0}^{d-2}\big[ \th ^{-1}(\mu +\min \shugo{\nu ,1})+1\big] .}
\end{lem}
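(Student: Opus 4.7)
My approach is a covering-by-unit-balls argument. Since $\Sc{R}^{-1}\Zg^d$ is a rotated copy of $\Zg^d$, which still contains $O_{d,\ga}(1)$ points in any unit ball, we have $\#(\Zg^d\cap \Sc{R}(E))\lec \text{vol}(E^{+1})$ for any measurable $E\subset \R^d$, where $E^{+1}$ is its unit neighborhood. This lets me ignore $\Sc{R}$ and bound the count by the volume of the unit neighborhood of $D\cap B_{N_0}$ (further intersected with $K_\th$ for (ii)). As a first reduction, I decompose $[X,X+\nu]$ into $\lec \max\shugo{\nu,1}$ sub-slabs of width $\le \min\shugo{\nu,1}$; by additivity this produces the prefactor $\max\shugo{\nu,1}$ and reduces both bounds to the case $\nu\le 1$ (so $\min\shugo{\nu,1}=\nu$).

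For (i) with $\nu\le 1$, I slice $D$ by $\xi_1\in [X,X+\nu]$. The cross-section $A_{\xi_1}:=\shugo{\xi'\in \R^{d-1}:(\xi_1,\xi')\in D}$ is an annulus in $\R^{d-1}$ (a disk when $\xi_1>N$) of outer radius $R_o=\sqrt{(N+\mu)^2-\xi_1^2}$, inner radius $R_i=\sqrt{\max\shugo{0,N^2-\xi_1^2}}$, and radial thickness $R_o-R_i\sim N\mu/(R_o+R_i)$. I split based on whether $\xi_1<N-\mu$ or $\xi_1\ge N-\mu$. In the former case, $R_i\gec \sqrt{N\mu}$ so the thickness is $\lec \sqrt{N\mu}$, and intersecting $A_{\xi_1}$ with the projection of $B_{N_0}$ gives a $(d-1)$-volume $\lec N_0^{d-2}\sqrt{N\mu}$. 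In the latter case, $A_{\xi_1}\subset \shugo{|\xi'|\lec \sqrt{N\mu}}$, so the intersection with $B_{N_0}$ has volume $\lec \min\shugo{\sqrt{N\mu},N_0}^{d-1}\le N_0^{d-2}\sqrt{N\mu}$ via the elementary inequality $\min\shugo{a,b}^{d-1}\le a\,b^{d-2}$. Integrating over $\xi_1\in [X,X+\nu]$ (length $\lec 1$ after thickening) and using $\sqrt{N\mu}\le \sqrt{N(\mu+\nu)}$ concludes (i).

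For (ii) with $\nu\le 1$, the angular constraint $\angle(\xi,\xi_0)\in [\th/2,2\th]$ combined with $|\xi|\in [N,N+\mu]$ confines $\xi_1=|\xi|\cos\angle(\xi,\xi_0)$ to an interval $[N\cos(2\th),(N+\mu)\cos(\th/2)]$ of length $\sim N\th^2$ (using $\mu\ll N\th^2$, which follows from $\th\gg \sqrt{(\mu+\nu)/N}$). Its intersection with $[X,X+\nu]$ has length $\le \nu\le 1$. For fixed such $\xi_1$, the identity $|\xi'|^2=|\xi|^2-\xi_1^2$ forces $|\xi'|^2$ into an interval of length $\lec N(\mu+\nu)$, while the angular constraint gives $|\xi'|=|\xi|\sin\angle(\xi,\xi_0)\sim N\th$; hence $\xi'$ lies in an annulus in $\R^{d-1}$ of radius $\sim N\th$ and thickness $\lec \th^{-1}(\mu+\nu)$. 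Intersecting this annulus with the projection of $B_{N_0}$: the $(d-2)$-sphere of radius $N\th$ intersected with the ball in $\R^{d-1}$ of radius $N_0$ has $(d-2)$-area $\lec \min\shugo{N\th,N_0}^{d-2}$, and after thickening the radial thickness is $\lec \th^{-1}(\mu+\nu)+1$. Multiplying these factors with the $\xi_1$-length $\lec 1$ and restoring the prefactor $\max\shugo{\nu,1}$ yields the bound.

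The principal obstacle is the polar subcase of (i): absorbing $N_0$ uniformly in the two regimes $\sqrt{N\mu}\lec N_0$ and $\sqrt{N\mu}\gec N_0$ crucially uses the inequality $\min\shugo{a,b}^{d-1}\le a\,b^{d-2}$. A secondary point is the auxiliary sphere-ball area bound $\lec \min\shugo{R,N_0}^{d-2}$ required in (ii), which is a standard elementary geometric fact consistent with $1\le N_0\lec N$.
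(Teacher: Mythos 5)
Your overall plan matches the paper's: reduce to $\nu\le 1$ by slicing, bound the lattice count by a covering / unit-neighborhood argument, split into polar and non-polar regimes, use $\min\{a,b\}^{d-1}\le a\,b^{d-2}$ in the polar regime, and for (ii) use the annulus-intersect-ball geometry. Part (ii) is fine. However, part (i) has a gap. You announce you will bound the count by $\mathrm{vol}\big((D\cap B_{N_0})^{+1}\big)$, but what you actually estimate is $\int \mathrm{vol}_{d-1}\!\big(A_{\xi_1}\cap\pi(B_{N_0})\big)\,d\xi_1$ (times the thickened $\xi_1$-length), which is essentially $\mathrm{vol}(D\cap B_{N_0})$, not the volume of its unit neighborhood. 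These genuinely differ here: as $\xi_1$ ranges over $[X,X+\nu]$ the inner and outer radii $\sqrt{N^2-\xi_1^2}$, $\sqrt{(N+\mu)^2-\xi_1^2}$ of the cross-sectional annulus $A_{\xi_1}$ drift by $\sim N\nu/A$ (with $A=\sqrt{N^2-(X+\nu)^2}$), so the $\xi_1$-cross-section of the unit neighborhood is contained in an annulus of radial thickness $\sim N(\mu+\nu)/A$, not $N\mu/R\lec\sqrt{N\mu}$. This drift is precisely what produces $\mu+\nu$ (not $\mu$) in the lemma's bound, and it is what the paper captures by first writing the $\xi_1$-uniform inclusion $\sqrt{N^2-(X+\nu)^2}\le|\xi'|\le\sqrt{(N+\mu)^2-X^2}$ and only then covering. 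When $\nu\gg\mu$ the discrepancy is real: with $d=2$, $\mu\sim N^{-1}$, $\nu\sim N^{-1/2}$, $X$ close to $N$, the set $D\cap B_{N_0}$ is a thin, nearly straight band of length $\sim\sqrt{N(\mu+\nu)}\sim N^{1/4}$, so $\mathrm{vol}\big((D\cap B_{N_0})^{+1}\big)\sim N^{1/4}$, whereas your claimed bound is $\sqrt{N\mu}\sim 1$. The closing remark ``$\sqrt{N\mu}\le\sqrt{N(\mu+\nu)}$'' does not repair this, since the question is whether $\sqrt{N\mu}$ bounds the count at all, and the volume computation you carried out does not show that.

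Notice that in (ii) you in fact use the correct $\mu+\nu$: you write that $|\xi'|^2$ lies in an interval of length $\lec N(\mu+\nu)$, which is the length of $[N^2-(X+\nu)^2,\,(N+\mu)^2-X^2]$, obtained by letting both $|\xi|$ and $\xi_1$ vary (despite phrasing it ``for fixed $\xi_1$''). The fix for (i) is to do the same: enclose $D\cap B_{N_0}$ in the product set $\{\xi_1\in[X,X+\nu]\}\times\big\{|\xi'|\in\big[\sqrt{N^2-(X+\nu)^2},\sqrt{(N+\mu)^2-X^2}\big]\big\}\cap\pi(B_{N_0})$ before covering, and split on $X\lessgtr N-(\mu+2\nu)$ rather than on $\xi_1\lessgtr N-\mu$ so that $A\gec\sqrt{N(\mu+\nu)}$ in the non-polar case. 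With that change the rest of your argument goes through and coincides with the paper's proof.
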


\begin{proof}
We may restrict our attention to the case $\nu \le 1$; for $\nu >1$ the claim follows by slicing $D$ into $O(\nu)$ subdomains and applying the estimate for $\nu =1$ to each slice.

(i) Suppose $\xi =(\xi _1,\xi ')\in D\cap B_{N_0}$.
Consider first the case $X\ge N-(\mu +2\nu )$.
We observe that
\eqq{|\xi '|\le \sqrt{(N+\mu )^2-X^2}\le \sqrt{(N+\mu )^2-(N-\mu -2\nu )^2}\sim \sqrt{N(\mu +\nu )},}
 thus $\xi '$ is in a ball $\subset \R ^{d-1}$ of radius $\sim \min \shugo{\sqrt{N(\mu +\nu )},N_0}$.
Since the assumption $\mu ,\nu \ge N^{-1}$ implies $\sqrt{N(\mu +\nu )}\ge 1$, we see that $D\cap B_{N_0}$ is contained in the union of at most $O(\min \shugo{[N(\mu +\nu )]^{\frac{d-1}{2}},N_0^{d-1}})$ balls with radius $1$.
Therefore, we obtain the bound $\min \shugo{[N(\mu +\nu )]^{\frac{d-1}{2}},N_0^{d-1}}$, which is exceeded by the required bounds.

Next, assume $X\le N-(\mu +2\nu )$, which implies that
\eqq{\sqrt{N^2-(X+\nu )^2}\le |\xi '|\le \sqrt{(N+\mu )^2-X^2},}
and that $|\xi '|$ is contained to an interval of length
\eqq{\sqrt{(N+\mu )^2-X^2}-\sqrt{N^2-(X+\nu )^2}\lec \frac{N(\mu +\nu )}{\sqrt{N^2-(X+\nu )^2}}.}
Since $X+\nu \le N-(\mu +\nu )$, we have $\frac{N(\mu +\nu )}{\sqrt{N^2-(X+\nu )^2}}\lec \sqrt{N^2-(X+\nu )^2}$.
We also observe from the assumption that $\sqrt{N^2-(X+\nu )^2}\gec 1$.
Therefore, we obtain
\eqs{\Big( \frac{N(\mu +\nu )}{\sqrt{N^2-(X+\nu )^2}}+1\Big) \min \shugo{\big[ N^2-(X+\nu )^2\big] ^{\frac{d-2}{2}},\,N_0^{d-2}}\\
\lec \Big( \frac{N(\mu +\nu )}{\sqrt{N^2-(N-(\mu +\nu ))^2}}+1\Big) N_0^{d-2}\lec \big[ N(\mu +\nu )\big] ^{\frac{1}{2}}N_0^{d-2}}
as a bound.

(ii) Note that $\th \gg (\frac{\mu +\nu}{N})^{\frac{1}{2}}$ and $\mu ,\nu \ge N^{-1}$ imply $N\th \gg 1$.
If $\xi \in D\cap B_{N_0}\cap K_\th$, then
\eqq{\max \shugo{N\sin \frac{\th}{2},\,\sqrt{N^2-(X+\nu )^2}}\le |\xi '|\le \min \shugo{(N+\mu )\sin 2\th ,\,\sqrt{(N+\mu )^2-X^2}},}
so $\xi '\in \R ^{d-1}$ has to be in the intersection of a ball with radius $N_0$ and an annulus of radius $\sim N\th$ and thickness $\lec \frac{N(\mu +\nu )}{N\th}~(\ll N\th )$.
We thus obtain the claimed estimate.
\end{proof}

\section{Trilinear estimates}\label{sec_trilinear}

The required bilinear estimates are reduced to some trilinear estimates by duality.
In this section we shall prove various trilinear estimates for functions dyadically restricted both in the frequency variable $k$ and in the modulation variable $\tau +|k|^2$ or $\tau \pm |k|$.

\subsection{Estimate for the high-modulation interactions}
We begin with the trilinear estimate for the high-modulation cases, namely $L_{\max}\gec N_{\max}^2$.
As discussed later, this category includes the high-low interactions where one Schr\"odinger frequency is much greater than the other Schr\"odinger frequency.

\begin{prop}[High-modulation interactions]\label{prop_te_highlow}
Let $d\ge 2$.
Let $N_j,L_j\ge 1$ be dyadic numbers and $f,g_1,g_2\in L^2_{\ze}(\R \times \Zg ^d)$ be real-valued nonnegative functions with the support properties
\eqq{\Supp{f}{\FR{P}_{N_0}\cap \FR{W}_{L_0}^\pm},\quad \Supp{g_j}{\FR{P}_{N_j}\cap \FR{S}_{L_j}},\quad j=1,2.}
Assume $L_{\max}\gec N_{\max}^2$.
Then, we have
\eqq{\iint _{\ze _0=\ze _1-\ze _2}f(\ze _0)g_1(\ze _1)g_2(\ze _2)\lec L_{\max}^{\frac{1}{2}}L_{\med}^{\frac{1}{4}}L_{\min}^{\frac{1}{4}}N_{\min}^{\frac{d}{2}}N_{\max}^{-1}\norm{f}{L^2}\norm{g_1}{L^2}\norm{g_2}{L^2}.}
\end{prop}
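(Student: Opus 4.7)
The plan is to dualize to a physical-space trilinear integral, apply Cauchy--Schwarz by pairing two of the three functions as a product in $L^2_{t,x}$, and bound that product by a frequency-counting argument closely related to Lemma~\ref{lem_bs}. Writing $v:=\F_{t,x}^{-1}f$ and $u_j:=\F_{t,x}^{-1}g_j$, Parseval gives
\[
\iint_{\ze_0=\ze_1-\ze_2}f(\ze_0)g_1(\ze_1)g_2(\ze_2)\;=\;c\int_{\R\times\Tg^d}v\,u_1\bbar{u_2}\,dt\,dx.
\]
By the symmetry between $(g_1,\ze_1)$ and $(g_2,\ze_2)$ we may assume $L_1\geq L_2$, and split into Case~A ($L_0=L_{\max}$) and Case~B ($L_1=L_{\max}$).

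In Case~A, I would place $f$ in $L^2_\ze$ and reduce the task to bounding
\[
\|\F(u_1\bbar{u_2})\|_{L^2_\ze(\FR{P}_{N_0})}\;\lec\;\sup_{\ze_0\in\FR{P}_{N_0}\cap\FR{W}^\pm_{L_0}}|E(\ze_0)|^{1/2}\,\|g_1\|_{L^2}\|g_2\|_{L^2},
\]
where $E(\ze_0):=\{\ze_1:\ze_1\in\supp{g_1},\;\ze_1-\ze_0\in\supp{g_2}\}$. Combining the wave support condition $\tau_0=\mp|k_0|+O(L_0)$ with the Schr\"odinger resonance identity $\tau_0-|k_0|^2+2k_0\cdot k_1=O(\overline{L}_{12})$ (as in the proof of Lemma~\ref{lem_bs}(i)) confines the $k_1$-slice of $E(\ze_0)$ to the intersection of a ball of radius $\lec\min\{N_1,N_2\}$ with a slab of thickness $\lec\overline{L}_{12}/N_{\max}$ normal to $k_0$; counting lattice points in this region while incorporating the truncation that the slab cannot exceed the containing ball yields
\[
|E(\ze_0)|\;\lec\;\underline{L}_{12}\cdot\min\{N_1,N_2\}^{d-1}\cdot\min\bigl\{\min\{N_1,N_2\},\,\overline{L}_{12}/N_{\max}+1\bigr\}.
\]
Case~B is handled analogously by pairing $f$ with $g_2$ (placing $g_1$ in $L^2_\ze$) and invoking the analogous geometric count underlying Lemma~\ref{lem_bs}(ii), with the same type of truncation; low-frequency issues such as $N_0\leq 1$ are treated separately via Bernstein's inequality.

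The hypothesis $L_{\max}\gec N_{\max}^2$ is then used in two ways: the factor $L_{\max}^{1/2}/N_{\max}\gec 1$ absorbs the ``$+1$'' part of the slab thickness, while $\underline{L}^{1/2}(\overline{L}/N_{\max})^{1/2}$ reorganises as $L_{\max}^{1/2}L_{\med}^{1/4}L_{\min}^{1/4}/N_{\max}^{1/2}$ via $\underline{L}^{1/2}\overline{L}^{1/2}\leq L_{\max}^{1/2}(\underline{L}\,\overline{L})^{1/4}$ (valid since $\underline{L},\overline{L}\leq L_{\max}$). The remaining dimensional comparison $\min\{N_1,N_2\}^{(d-1)/2}/N_{\max}^{1/2}\lec N_{\min}^{d/2}/N_{\max}$ follows from $N_{\min}\geq 1$ together with the triangle constraint $k_0=k_1-k_2$, which forces at least one of $N_0,N_1,N_2$ to be comparable to $N_{\max}$.

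The main obstacle will be verifying the numerology in the intermediate subrange $\overline{L}_{12}/N_{\max}\in[1,\min\{N_1,N_2\}]$, where neither side of the $\min\{\cdot\}$ truncation is trivially saturated and the generic bound of Lemma~\ref{lem_bs} alone is off by up to a factor of $N_{\max}/N_{\min}$; the closing numerical step there is $L_{\med}L_{\min}\leq L_{\med}^2\leq(N_{\max}N_{\min})^2$, valid precisely because $L_{\med}\leq N_{\max}N_{\min}$ in that subrange.
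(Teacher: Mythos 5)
Your strategy — Cauchy--Schwarz in one dual variable followed by a lattice-point count as in Lemma~\ref{lem_bs}, with a truncation of the slab to the containing ball — is a genuinely different and considerably heavier route than the paper's, and in the form you wrote it down it has a gap. The paper proves Proposition~\ref{prop_te_highlow} in two lines: an elementary H\"older--Young argument gives the crude bound
\[
\iint_{\ze_0=\ze_1-\ze_2}f\,g_1\,g_2 \;\lec\; L_{\min}^{\frac{1}{2}}N_{\min}^{\frac{d}{2}}\,\norm{f}{L^2}\norm{g_1}{L^2}\norm{g_2}{L^2}
\]
(Cauchy--Schwarz in the dual variable $\ze_a$ for whichever $a$ leaves the two indices carrying $L_{\min}$ and $N_{\min}$; the intersection of the two remaining supports has measure $\lec L_{\min}N_{\min}^d$ because both the modulation intervals and the frequency balls intersect), and then one multiplies by $L_{\max}^{1/2}/N_{\max}\gec 1$ and uses $L_{\min}^{1/2}\le L_{\min}^{1/4}L_{\med}^{1/4}$. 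No recourse to Lemma~\ref{lem_bs} is needed at all.

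Your detailed count, however, is not quite Lemma~\ref{lem_bs}, and the discrepancies are fatal. First, the slab thickness coming from the resonance identity $\tau_0-|k_0|^2+2k_0\cdot k_1=O(\overline{L}_{12})$ is $\overline{L}_{12}/|k_0|\sim\overline{L}_{12}/N_0$ (and in your Case~B, $\overline{L}_{02}/N_1$), not $\overline{L}/N_{\max}$; replacing $N_0$ by $N_{\max}$ underestimates $|E(\ze_0)|$ whenever $N_0\ll N_{\max}$. Second, and more seriously, your lattice count places $k_1$ in a ball of radius $\lec\min\{N_1,N_2\}$, giving a transverse factor $\min\{N_1,N_2\}^{d-1}$; but the $N_{\min}^{(d-1)/2}$ in Lemma~\ref{lem_bs}(i) is obtained only via the orthogonality (cube) decomposition in the case $N_{\min}=N_0$, which you omit. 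In the high--high regime $N_0=N_{\min}\ll N_1\sim N_2\sim N_{\max}$ (which certainly occurs, since only the two largest of $N_0,N_1,N_2$ are forced comparable), the bound $|E(\ze_0)|\lec\underline{L}_{12}\,\min\{N_1,N_2\}^{d-1}\cdot\min\{\min\{N_1,N_2\},\,\overline{L}/N_{\max}+1\}$ can be as large as $\underline{L}_{12}N_{\max}^{d-1}$, which is far bigger than the needed $\underline{L}_{12}N_0^{d}$. Consequently your closing comparison $\min\{N_1,N_2\}^{(d-1)/2}/N_{\max}^{1/2}\lec N_{\min}^{d/2}/N_{\max}$ is simply false: take $d=2$, $N_0=N_{\min}=1$, $N_1\sim N_2\sim N_{\max}\gg 1$, so the left side is $\sim 1$ while the right side is $\sim N_{\max}^{-1}$. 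Likewise, the statement that ``$L_{\med}\le N_{\max}N_{\min}$ in that subrange'' fails in the same configuration (the subrange condition only gives $L_{\med}\le N_{\max}\min\{N_1,N_2\}\sim N_{\max}^2$). If you instead invoke Lemma~\ref{lem_bs} exactly as stated (so the transverse factor is $N_{\min}^{(d-1)/2}$ and the slab thickness is $\overline{L}/N_0$ or $\overline{L}/N_1$) together with your truncation idea, the numerology does close; but as written, the argument breaks in the case $N_{\min}=N_0$.
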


\begin{proof}
An easy argument with the H\"older and the Young inequalities implies a bound of
\eqq{L_{\min}^{\frac{1}{2}}N_{\min}^{\frac{d}{2}}\norm{f}{L^2}\norm{g_1}{L^2}\norm{g_2}{L^2}\lec L_{\min}^{\frac{1}{2}}L_{\max}^{\frac{1}{2}}N_{\min}^{\frac{d}{2}}N_{\max}^{-1}\norm{f}{L^2}\norm{g_1}{L^2}\norm{g_2}{L^2},}
as required.
\end{proof}

Concerning the high-low interactions, the above argument will be sufficient for all the regularities satisfying \eqref{hani_3d} or \eqref{hani_2d} except for the border cases $l=\frac{d}{2}-1$ and $2s=l+\frac{d}{2}$, for which we can recover the estimates in the following way.
Note that a negative power of $N_{\min}$ will be obtained at the expense of more power in $L_j$'s.

\begin{prop}\label{prop_te_highlow2}
Let $d\ge 2$.
Let $f,g_1,g_2\in L^2_{\ze}(\R \times \Zg ^d)$ be functions as in Proposition~\ref{prop_te_highlow}, and assume $N_1\gg N_2$ or $N_2\gg N_1$.
Then, we have
\eqq{\iint _{\ze _0=\ze _1-\ze _2}f(\ze _0)g_1(\ze _1)g_2(\ze _2)\lec L_{\max}^{\frac{1}{2}}L_{\med}^{\frac{3}{8}}L_{\min}^{\frac{3}{8}}\underline{N}_{12}^{\frac{d-1}{2}}\overline{N}_{12}^{-1}\norm{f}{L^2}\norm{g_1}{L^2}\norm{g_2}{L^2}.}
\end{prop}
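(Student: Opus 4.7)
The target exponents decompose as the arithmetic mean of those of Proposition~\ref{prop_te_highlow} (namely $L_{\max}^{1/2}L_{\med}^{1/4}L_{\min}^{1/4}\underline{N}_{12}^{d/2}\overline{N}_{12}^{-1}$) and those of a ``saturated-modulation'' bound of the shape $L_{\max}^{1/2}L_{\med}^{1/2}L_{\min}^{1/2}\underline{N}_{12}^{(d-2)/2}\overline{N}_{12}^{-1}$. My plan is therefore to produce a second bilinear Strichartz estimate (at least in the sub-cases where Proposition~\ref{prop_te_highlow} does not already suffice) and then apply the trivial interpolation $\iint\le\min(A,B)\le\sqrt{AB}$. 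Assume without loss of generality $N_1\gg N_2$, so that $\overline{N}_{12}\sim N_0$, $\underline{N}_{12}=N_2$, and the high-modulation hypothesis $L_{\max}\gec N_{\max}^2$ reads $L_{\max}\gec N_0^2$.

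I split on which $L_j$ is maximal. When $L_0=L_{\max}$, Cauchy--Schwarz in $\zeta_0$ combined with Lemma~\ref{lem_bs}(i) applied to $g_1\bbar{g_2}$ restricted to $\FR{P}_{N_0}$ yields $L_{\min}^{1/2}(L_{\med}/N_0+1)^{1/2}N_2^{(d-1)/2}\|f\|\|g_1\|\|g_2\|$; the sub-range $L_{\med}\le N_0$ directly meets the target using $L_0^{1/2}\ge N_0$, while $L_{\med}\ge N_0$ gives $L_{\min}^{1/2}L_{\med}^{1/2}N_0^{-1/2}N_2^{(d-1)/2}$ which I combine with the Proposition~\ref{prop_te_highlow} bound via the geometric mean, the residual factor then being absorbed by $L_0\ge N_0^2\ge N_0N_2$. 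When $L_1=L_{\max}$ (the case $L_2=L_{\max}$ being symmetric), I instead apply Cauchy--Schwarz in $\zeta_1$ and Lemma~\ref{lem_bs}(ii) to $\ti{fg_2}$ restricted to $\FR{P}_{N_1}$, obtaining $\underline{L}_{02}^{1/2}(\overline{L}_{02}/N_2+1)^{1/2}N_2^{(d-1)/2}$; the sub-range $\overline{L}_{02}\le N_2$ meets the target via $L_{\max}^{1/2}\overline{N}_{12}^{-1}\ge 1$, and $\overline{L}_{02}\ge N_2$ produces $(L_{\min}L_{\med})^{1/2}N_2^{(d-2)/2}$, again closed by geometric averaging with Proposition~\ref{prop_te_highlow}.

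The hard part I foresee is that neither single-pairing Strichartz estimate reaches the target on its own in the intermediate modulation regime: the ``low Schr\"odinger output'' bound from Lemma~\ref{lem_bs}(ii) supplies the correct spatial factor $N_2^{(d-2)/2}$ but is shy of $L_{\max}^{1/2}\overline{N}_{12}^{-1}$ in modulation, while Proposition~\ref{prop_te_highlow} supplies the correct modulation factors but overshoots in space by $N_2^{1/2}$. Only after geometric averaging does the extra half-power $L_{\max}^{1/4}\ge\overline{N}_{12}^{1/2}$ become available from the high-modulation hypothesis and tie everything together; the delicate bookkeeping lies in verifying that each sub-case of $L_{\max}\in\{L_0,L_1,L_2\}$ admits a clean application of either Lemma~\ref{lem_bs}(i) or Lemma~\ref{lem_bs}(ii). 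I do not expect to need the refined lattice-point counting of Lemma~\ref{lem_annulustrip} for this proposition, as it is adapted to the lower-modulation estimates to follow.
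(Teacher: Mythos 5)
The proposal is correct, and the overall architecture (assume $N_1\gg N_2$ without loss of generality, split on which of $L_0,L_1,L_2$ is maximal, then Cauchy--Schwarz against that factor plus a bilinear Strichartz estimate for the remaining pair) matches the paper. The cases $L_{\max}=L_0$ and $L_{\max}=L_2$ are essentially identical in the two treatments: your geometric averaging against Proposition~\ref{prop_te_highlow} and the paper's direct insertion of the harmless factors $L_{\max}^{1/4}\overline{N}_{12}^{-1/2}\gec 1$ and $L_{\max}^{1/2}\overline{N}_{12}^{-1}\gec 1$ into the two terms of $(\overline{L}/N+1)^{1/2}$ are interchangeable bookkeeping. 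The genuine divergence is $L_{\max}=L_1$, where the paper subdivides into three regimes, disposing of $L_0\gec N_2^2$ and $L_2\gec N_2^2$ by a direct H\"older-in-$k$/Young-in-$\tau$ estimate of the shape $\norm{f}{\ell^2_kL^{8/5}_\tau}\norm{g_1}{\ell^2_kL^2_\tau}\norm{g_2}{\ell^1_kL^{8/7}_\tau}$ (and its dual) that bypasses Lemma~\ref{lem_bs} altogether, and reserving Cauchy--Schwarz in $\zeta_1$ with Lemma~\ref{lem_bs}(ii) only for $\overline{L}_{02}\lec N_2^2$; you instead run the Strichartz pairing of $f$ with $g_2$ across all of $L_{\max}=L_1$ and geometrically average against Proposition~\ref{prop_te_highlow} whenever $\overline{L}_{02}\ge N_2$, with the residual $L_{\max}^{-1/4}(N_0N_2)^{1/4}$ absorbed because $L_{\max}\gec N_0^2\ge N_0N_2$. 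Both routes reach the same exponents; yours is more uniform and uses fewer distinct tools, while the paper's avoids any appeal to Proposition~\ref{prop_te_highlow}. One small point to fix: you describe $L_{\max}\gec N_{\max}^2$ as a standing hypothesis, but the proposition is stated without it; in the high-low regime it must first be observed to hold automatically, since $\big|{\pm}|k_0|-|k_1|^2+|k_2|^2\big|\sim N_{\max}^2$ forces $L_{\max}\gec N_{\max}^2$ on the convolution constraint, as the paper records at the outset of its proof.
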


\begin{proof}
Without loss of generality we may assume $N_1\gg N_2$, which implies $N_0\sim N_1\gg 1$ whenever the integral is nonzero.
We can also assume $L_{\max}\gec N_{\max}^2$, because
\eqq{|\tau _0\pm |k_0||+|\tau _1+|k_1|^2|+|\tau _2+|k_2|^2|\ge |\pm |k_0|-|k_1|^2+|k_2|^2|\sim N_{\max}^2}
under the convention $\ze _0=\ze _1-\ze _2$.

We begin with the case $L_{\max}=L_0$.
Applying the Cauchy-Schwarz inequality in $\ze _0$ and Lemma~\ref{lem_bs} (i), we have
\eqq{\iint _{\ze _0=\ze _1-\ze _2}f(\ze _0)g_1(\ze _1)g_2(\ze _2)\lec \norm{f}{L^2}\underline{L}_{12}^{\frac{1}{2}}\Big( \frac{\overline{L}_{12}}{N_0}+1\Big) ^{\frac{1}{2}}N_{\min}^{\frac{d-1}{2}}\norm{g_1}{L^2}\norm{g_2}{L^2},}
with the prefactor bounded by
\eqq{\Big( L_{\min}^{\frac{1}{2}}\frac{L_{\med}^{\frac{1}{2}}}{\overline{N}_{12}^{\frac{1}{2}}}\cdot \frac{L_{\max}^{\frac{1}{4}}}{\overline{N}_{12}^{\frac{1}{2}}}+L_{\min}^{\frac{1}{2}}\cdot \frac{L_{\max}^{\frac{1}{2}}}{\overline{N}_{12}}\Big) \underline{N}_{12}^{\frac{d-1}{2}}\le L_{\min}^{\frac{3}{8}}L_{\med}^{\frac{3}{8}}L_{\max}^{\frac{1}{2}}\frac{\underline{N}_{12}^{\frac{d-1}{2}}}{\overline{N}_{12}},}
as desired.

For the case $L_{\max}=L_2$, we use the Cauchy-Schwarz in $\ze _2$ and Lemma~\ref{lem_bs} (ii) to obtain the bound
\eqq{\norm{g_2}{L^2}\underline{L}_{01}^{\frac{1}{2}}\Big( \frac{\overline{L}_{01}}{N_1}+1\Big) ^{\frac{1}{2}}N_{\min}^{\frac{d-1}{2}}\norm{f}{L^2}\norm{g_1}{L^2},}
which leads to an appropriate estimate in the same manner as above.

We finally treat the case $L_{\max}=L_1$, dividing the analysis into three subcases.

(a) $L_0\gec N_2^2$.
An application of the H\"older inequality in $\ze _0$ followed by the Young inequality for the convolution implies that
\eqq{\iint _{\ze _0=\ze _1-\ze _2}f(\ze _0)g_1(\ze _1)g_2(\ze _2)\le \norm{f}{\ell ^2_k(L^{8/5}_\tau )}\norm{g_1}{\ell ^2_k(L^2_\tau )}\norm{g_2}{\ell ^1_k(L^{8/7}_\tau )},}
which is, by the assumptions and the H\"older inequality again, estimated by
\begin{align}
&\lec L_0^{\frac{1}{8}}\norm{f}{L^2}\norm{g_1}{L^2}N_2^{\frac{d}{2}}L_2^{\frac{3}{8}}\norm{g_2}{L^2}\label{est_highlow1}\\
&\lec L_0^{\frac{1}{8}}\norm{f}{L^2}\norm{g_1}{L^2}N_2^{\frac{d}{2}}L_2^{\frac{3}{8}}\norm{g_2}{L^2}\cdot \frac{L_0^{\frac{1}{4}}}{N_2^{\frac{1}{2}}}\frac{L_{\max}^{\frac{1}{2}}}{\overline{N}_{12}},\notag
\end{align}
as desired.

(b) $L_2\gec N_2^2$.
This case is treated similarly to (a) if we apply the Young inequality as $\tnorm{f}{\ell ^2L^{8/7}}\tnorm{g_1}{\ell ^2L^2}\tnorm{g_2}{\ell ^1L^{8/5}}$.

(c) $\overline{L}_{02}\lec N_2^2$.
Applying the Cauchy-Schwarz in $\ze _1$ and then employing Lemma~\ref{lem_bs} (ii), we have an acceptable bound with prefactor
\eqq{\underline{L}_{02}^{\frac{1}{2}}\Big( \frac{\overline{L}_{02}}{N_2}+1\Big) ^{\frac{1}{2}}N_{\min}^{\frac{d-1}{2}}\lec \underline{L}_{02}^{\frac{1}{2}}\overline{L}_{02}^{\frac{1}{4}}\underline{N}_{12}^{\frac{d-1}{2}}\cdot \frac{L_{\max}^{\frac{1}{2}}}{\overline{N}_{12}}.\qedhere}
\end{proof}

In fact, we will use Corollary~\ref{cor_te_highlow} below for the high-low interactions.
The less power of $L_j$'s will lead to the longer local existence time of solutions, which will be important in constructing global solutions in the 2d case.
 
\begin{cor}[High-low interactions]\label{cor_te_highlow}
Under the same assumptions as in Proposition~\ref{prop_te_highlow2}, we have
\eqq{\iint _{\ze _0=\ze _1-\ze _2}f(\ze _0)g_1(\ze _1)g_2(\ze _2)\lec L_{\max}^{\frac{1}{2}}L_{\med}^{\frac{1}{4}+}L_{\min}^{\frac{1}{4}+}\underline{N}_{12}^{\frac{d}{2}-}\overline{N}_{12}^{-1}\norm{f}{L^2}\norm{g_1}{L^2}\norm{g_2}{L^2}.}
\end{cor}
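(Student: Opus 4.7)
The corollary is a direct interpolation between Proposition~\ref{prop_te_highlow} and Proposition~\ref{prop_te_highlow2}. In the high-low regime $N_1\gg N_2$ or $N_2\gg N_1$ we have $N_0\sim \overline{N}_{12}$ whenever the convolution integral is non-zero, so $N_{\min}=\underline{N}_{12}$ and $N_{\max}\sim \overline{N}_{12}$. Proposition~\ref{prop_te_highlow} thus yields the bound
\[
L_{\max}^{1/2}L_{\med}^{1/4}L_{\min}^{1/4}\,\underline{N}_{12}^{d/2}\,\overline{N}_{12}^{-1}\norm{f}{L^2}\norm{g_1}{L^2}\norm{g_2}{L^2},
\]
and Proposition~\ref{prop_te_highlow2} gives
\[
L_{\max}^{1/2}L_{\med}^{3/8}L_{\min}^{3/8}\,\underline{N}_{12}^{(d-1)/2}\,\overline{N}_{12}^{-1}\norm{f}{L^2}\norm{g_1}{L^2}\norm{g_2}{L^2}.
\]

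The plan is to take the geometric mean of these two bounds with weights $\theta\in(0,1]$ on the second and $1-\theta$ on the first. For any $\e>0$ one selects $\theta=8\e$ (sufficiently small), which transforms the $L_{\med}$ and $L_{\min}$ exponents into $\tfrac14(1-\theta)+\tfrac38\theta=\tfrac14+\e$, and the exponent on $\underline{N}_{12}$ into $\tfrac{d}{2}(1-\theta)+\tfrac{d-1}{2}\theta=\tfrac{d}{2}-4\e$, while the $L_{\max}$ and $\overline{N}_{12}$ exponents remain $\tfrac12$ and $-1$ respectively. Since $\e>0$ is arbitrary, this yields exactly the corollary with the $+$ and $-$ notation fixed in Section~\ref{sec_intro}.

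There is essentially no obstacle here; the only subtlety is to verify that the inequality in both propositions is really valid throughout the high-low subregime (so that an honest interpolation applies with the same triple of functions), which is immediate from the statements of the two propositions. Hence the corollary follows at once.
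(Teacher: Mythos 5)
Your proof follows exactly the paper's one-line argument: interpolate between Propositions~\ref{prop_te_highlow} and~\ref{prop_te_highlow2}, and your bookkeeping of the exponents under the weighted geometric mean is correct. One point you gloss over slightly when you say the hypotheses match ``immediate[ly] from the statements'': Proposition~\ref{prop_te_highlow} carries the hypothesis $L_{\max}\gec N_{\max}^2$, which is not stated as part of the high-low regime; it is nonetheless automatically satisfied there because under $\ze_0=\ze_1-\ze_2$ the three modulations sum (up to sign) to $\pm|k_0|-|k_1|^2+|k_2|^2$, whose absolute value is $\sim N_{\max}^2$ when $N_1\gg N_2$ or $N_2\gg N_1$ (this is recorded at the start of the proof of Proposition~\ref{prop_te_highlow2}). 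With that observation supplied, the interpolation is legitimate and the corollary follows.
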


\begin{proof}
An interpolation between Proposition~\ref{prop_te_highlow} and Proposition~\ref{prop_te_highlow2} shows the estimate.
\end{proof}

The interactions with very low wave frequency are also treated here.
Note that this case is a part of the high-high interactions to be discussed in the following two subsections.

\begin{cor}[Very low wave frequency]\label{cor_te_low}
Let $f,g_1,g_2\in L^2_{\ze}(\R \times \Zg ^d)$ be functions as in Proposition~\ref{prop_te_highlow}, and assume that $N_0\lec 1$.
Then, we have
\eqq{\iint _{\ze _0=\ze _1-\ze _2}f(\ze _0)g_1(\ze _1)g_2(\ze _2)\lec (L_0L_1L_2)^{\frac{1}{6}}\norm{f}{L^2}\norm{g_1}{L^2}\norm{g_2}{L^2}.}
\end{cor}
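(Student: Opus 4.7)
The plan is to reduce the trilinear integral to a spacetime H\"older inequality via Plancherel, then estimate each factor by Bernstein in time, exploiting the hypothesis $N_0\lec 1$ via Bernstein in space.  Letting $F,G_1,G_2$ denote the inverse spacetime Fourier transforms of $f,g_1,g_2$, we have the identity
\[
\iint_{\ze_0=\ze_1-\ze_2} f(\ze_0)g_1(\ze_1)g_2(\ze_2)=c\int_{\R\times \Tg^d}F(t,x)G_1(t,x)\bbar{G_2(t,x)}\,dt\,dx
\]
up to a fixed constant.  I would then apply H\"older with exponents $(L^3_tL^\infty_x)\cdot(L^3_tL^2_x)\cdot(L^3_tL^2_x)\hookrightarrow L^1_{t,x}$ to bound the integral by $\tnorm{F}{L^3_tL^\infty_x}\tnorm{G_1}{L^3_tL^2_x}\tnorm{G_2}{L^3_tL^2_x}$.

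The key step is to prove $\tnorm{F}{L^3_tL^\infty_x}\lec L_0^{1/6}\tnorm{f}{L^2}$ and $\tnorm{G_j}{L^3_tL^2_x}\lec L_j^{1/6}\tnorm{g_j}{L^2}$ for $j=1,2$.  For each fixed $k$, the function $h_j(t,k):=\int g_j(\tau,k)e^{i\tau t}\,d\tau$ is Fourier-supported in $\tau$ on an interval of length $O(L_j)$; Bernstein's inequality then gives $\tnorm{h_j(\cdot,k)}{L^\infty_t}\lec L_j^{1/2}\tnorm{h_j(\cdot,k)}{L^2_t}$, and interpolation with the trivial $L^2_t$ bound yields $\tnorm{h_j(\cdot,k)}{L^3_t}\lec L_j^{1/6}\tnorm{h_j(\cdot,k)}{L^2_t}$.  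For $G_j$ with $j\in\{1,2\}$, Minkowski's inequality (valid because $3\ge 2$) together with Plancherel in $x$ converts this into the stated estimate.  For $F$, the assumption $N_0\lec 1$ restricts $k$ to $O(1)$ lattice points of $\Zg^d$, so Bernstein in $x$ gives $\tnorm{F(t,\cdot)}{L^\infty_x}\lec \tnorm{F(t,\cdot)}{L^2_x}$, reducing the estimate for $F$ to the $L^3_tL^2_x$ case already handled.

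Multiplying the three norm bounds produces the claimed inequality $(L_0L_1L_2)^{1/6}\tnorm{f}{L^2}\tnorm{g_1}{L^2}\tnorm{g_2}{L^2}$.  There is no significant obstacle here: the argument is a direct combination of mixed-norm H\"older and Bernstein inequalities, with the low wave frequency hypothesis $N_0\lec 1$ entering only to absorb the spatial Bernstein constant $N_0^{d/2}$ into the implicit constant (which is allowed to depend on $d$ and $\ga$, per the paper's convention).  One may verify sharpness by choosing $f,g_1,g_2$ proportional to indicators of their prescribed supports, confirming that the exponent $\frac{1}{6}$ cannot be improved.
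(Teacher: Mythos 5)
Your argument is correct, but it takes a genuinely different route from the paper's one-line proof. The paper invokes the first step of the proof of Proposition~\ref{prop_te_highlow}: a Fourier-side H\"older--Young argument in which the factor with minimal modulation is estimated in $L^1_\tau$ (gaining $L_{\min}^{1/2}$) and the factor with minimal spatial frequency in $\ell^1_k$ (gaining $N_{\min}^{d/2}$), producing the intermediate bound $L_{\min}^{1/2}N_{\min}^{d/2}\tnorm{f}{L^2}\tnorm{g_1}{L^2}\tnorm{g_2}{L^2}$; the hypothesis $N_0\lec 1$ then gives $N_{\min}\lec 1$, and $L_{\min}^{1/2}\le (L_0L_1L_2)^{1/6}$ because $L_{\min}\le L_j$ for each $j$. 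You instead pass to physical space via Plancherel, apply a symmetric $L^3_tL^\infty_x\cdot L^3_tL^2_x\cdot L^3_tL^2_x$ H\"older inequality, and extract a factor $L_j^{1/6}$ from each function by Bernstein in time, absorbing the spatial Bernstein constant through $N_0\lec 1$. Both methods are elementary and valid; the paper's route actually yields the slightly stronger intermediate constant $L_{\min}^{1/2}$, while yours produces the symmetric prefactor $(L_0L_1L_2)^{1/6}$ directly. One caveat about your closing sharpness remark: testing with $L_0=L_1=L_2$ does show that the exponent $\tfrac{1}{6}$ cannot be lowered in the symmetric form, but $(L_0L_1L_2)^{1/6}$ is not the best possible prefactor, since $L_{\min}^{1/2}$ is strictly smaller whenever the $L_j$ are unequal.
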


\begin{proof}
The first half of the proof of Proposition~\ref{prop_te_highlow} will be sufficient.
\end{proof}


\subsection{Estimate for the middle-modulation interactions}

We begin to establish the trilinear estimate for the high-high interactions in which two Schr\"odinger frequencies are comparable and not smaller than the wave frequency, namely $N_0\lec N_1\sim N_2$.
The case $N_0\lec 1$ is already finished in Corollary~\ref{cor_te_low}, and the case $L_{\max}\gec N_{\max}^2$ is treated with Proposition~\ref{prop_te_highlow}.
We now assume $N_0\gg 1$, and consider in this subsection the middle-modulation interactions, namely the case $N_{\max}\lec L_{\max}\ll N_{\max}^2$.

\begin{prop}[Middle-modulation high-high interactions]\label{prop_te_highhigh-m}
Let $d\ge 2$, and $f,g_1,g_2\in L^2_{\ze}(\R \times \Zg ^d)$ be functions as in Proposition~\ref{prop_te_highlow}.
Assume that $1\ll N_0\lec N_1\sim N_2\lec L_{\max}\ll N_1^2$.
Then, we have
\eqq{\iint _{\ze _0=\ze _1-\ze _2}f(\ze _0)g_1(\ze _1)g_2(\ze _2)\lec L_{\max}^{\frac{3}{8}+}L_{\med}^{\frac{3}{8}+}L_{\min}^{\frac{1}{4}}N_0^{\frac{d-2}{2}}\big( \frac{N_0}{N_1}\big) ^{0+}\norm{f}{L^2}\norm{g_1}{L^2}\norm{g_2}{L^2}.}
\end{prop}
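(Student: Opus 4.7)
The starting point is the near-resonance relation: from the support conditions $\ze_0 = \ze_1 - \ze_2$ together with the identity
\[
-(\tau_0 \pm |k_0|) + (\tau_1 + |k_1|^2) - (\tau_2 + |k_2|^2) = |k_1|^2 - |k_2|^2 \mp |k_0|,
\]
one obtains $|2k_1 \cdot k_0 - |k_0|^2 \mp |k_0|| \lec L_{\max}$, so the projection $k_1 \cdot k_0/|k_0|$ is confined to a ``plate'' of length $\sim L_{\max}/N_0$, and since $L_{\max} \gec N_0$, also $||k_1| - |k_2|| \lec L_{\max}/N_1$.

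The plan is to combine three complementary Cauchy--Schwarz estimates with Lemma~\ref{lem_bs} and then interpolate. In the bulk regime (where all three modulation sizes are at least $N_0$), Cauchy--Schwarz in $\ze_0$ together with Lemma~\ref{lem_bs}(i) and the hypothesis $L_{\max} \gec N_0$ yields a bound of size $L_1^{1/2} L_2^{1/2} N_0^{(d-2)/2}$; Cauchy--Schwarz in $\ze_1$ (respectively $\ze_2$) with Lemma~\ref{lem_bs}(ii) and $L_{\max} \gec N_1$ produces $L_0^{1/2} L_2^{1/2} (N_0/N_1)^{1/2} N_0^{(d-2)/2}$ (respectively the symmetric bound with $L_1$ in place of $L_2$). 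The weighted geometric mean of these three bounds, with the weights chosen in each subcase to match the target exponents $(3/8,3/8,1/4)$ on $(L_{\max}, L_{\med}, L_{\min})$, yields the claim: the resulting prefactor $(N_0/N_1)^{1/4}$ is dominated by $(N_0/N_1)^{0+}$ since $N_0 \le N_1$, and the $+$ in the $L$-exponents absorbs logarithmic losses from the dyadic summations implicit in Lemma~\ref{lem_bs}.

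When the middle modulation $L_{\med}$ drops below $N_0$, the simplification $(\bbar{L}/N_0 + 1)^{1/2} \sim L^{1/2}N_0^{-1/2}$ in Lemma~\ref{lem_bs}(i) is no longer available and the interpolation above fails to produce the $L_{\min}^{1/4}$ power. In this regime the plan is to exploit the annular constraint $||k_1|-|k_2|| \lec L_{\max}/N_1$ directly: decompose $g_1$ and $g_2$ into unit-width radial shells $\{|k_j| \in [m,m+1)\}$ indexed by integers $m \in [N_1/2, 2N_1]$, keep only the compatible shell pairs $(m,m')$ (those with $|m-m'| \lec L_{\max}/N_1$), and for each pair apply Lemma~\ref{lem_annulustrip}(i) with $\mu = 1$ (the thin annulus) and $\nu \sim L_{\max}/N_0$ (the plate) inside each cube of the standard orthogonal cube decomposition at scale $N_0$. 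A Schur-type Cauchy--Schwarz summation over the compatible shell pairs, using the orthogonality of distinct shells, keeps the summation cost to a factor $(L_{\max}/N_1)^{1/2}$.

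The principal obstacle lies precisely in this degenerate regime $L_{\med} \lec N_0$: obtaining the stated $L_{\min}^{1/4}$ rather than the naive $L_{\min}^{1/2}$ produced by Lemma~\ref{lem_bs} requires a careful combination of the shell decomposition with Lemma~\ref{lem_annulustrip}, possibly refined via the angular restriction of Lemma~\ref{lem_annulustrip}(ii), together with the cube-level orthogonality, all organised into a single counting argument. Handling the borderline case where the plate width $L_{\max}/N_0$ exceeds $N_1$ is a secondary complication: there Lemma~\ref{lem_annulustrip} no longer improves on Lemma~\ref{lem_bs}, but the hypothesis $L_{\max} \gec N_1$ delivers enough extra power of $L_{\max}$ for the baseline bound alone to imply the target.
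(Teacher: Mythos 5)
Your strategy is in the right spirit (near-resonance relations, Lemma~\ref{lem_bs}, Lemma~\ref{lem_annulustrip}, interpolation), but the proposal has two genuine gaps, and the organization differs substantially from the paper's.

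First, the ``bulk regime'' interpolation is not actually available as stated. The simplification of Lemma~\ref{lem_bs}(ii) to $\underline{L}_{02}^{1/2}\overline{L}_{02}^{1/2}N_1^{-1/2}N_0^{(d-1)/2}$ requires $\overline{L}_{02}\gec N_1$ (and similarly $\overline{L}_{01}\gec N_1$ for the symmetric bound), not merely the proposition's hypothesis $L_{\max}\gec N_1$. When $L_{\max}=L_1$ and $L_{\med}=\max\{L_0,L_2\}\ll N_1$, the bound you call ``$L_0^{1/2}L_2^{1/2}(N_0/N_1)^{1/2}N_0^{(d-2)/2}$'' from Cauchy--Schwarz in $\ze_1$ is simply not valid, so only two of your three inputs exist; their geometric mean cannot lower the exponent on $L_1=L_{\max}$ below $1/2$, and the target needs $3/8+$. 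The claim that interpolation ``with weights chosen in each subcase'' works therefore needs the case split on which $L_j$ is $L_{\max}$, and one of the resulting subcases breaks.

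Second, you explicitly leave the hard subcase unresolved (``the principal obstacle lies precisely in this degenerate regime''). This is the crux of the proposition, and a sketch of a shell-by-shell Schur-type count with $\mu=1$ is not a proof.

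The paper's proof is organized differently: it splits first on $L_{\max}$ ($=L_0$ vs.\ $\in\{L_1,L_2\}$) and then on the size of the remaining modulations. In case (I)-(b) it uses Lemma~\ref{lem_annulustrip}(i) \emph{once}, directly, with $N\sim N_1$, $\mu\sim L_{\max}/N_1$ (the full annular width coming from $||k_1|-|k_2||\lec L_{\max}/N_1$), $\nu\sim\overline{L}_{12}/N_0$ -- no unit-width shell decomposition, no Schur-type summation over shell pairs. In case (II)-(b) it exploits $N_1\lec L_{\max}/N_0$ to multiply a plain Lemma~\ref{lem_bs}(ii) bound by $L_{\max}^{1/4}N_0^{-1/4}N_1^{-1/4}\gec1$. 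The only interpolation appears in case (II)-(c), and it is between a Lemma~\ref{lem_bs}(ii) bound and a Lemma~\ref{lem_annulustrip} bound (rather than between three Lemma~\ref{lem_bs} bounds); this is precisely the step your shell-decomposition sketch is trying to replace and where the missing argument lives. Following the paper's subdivision, with each subcase treated by the single appropriate tool, is considerably cleaner and, crucially, complete.
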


\begin{proof}
\textbf{(I)} $L_{\max}=L_0$.
We consider two cases separately.

(a) If $\overline{L}_{12}\gec N_0$, we first apply the Cauchy-Schwarz inequality in $\ze _0$ and then Lemma~\ref{lem_bs} (i) to have a bound of
\eqq{\norm{f}{L^2}\underline{L}_{12}^{\frac{1}{2}}\overline{L}_{12}^{\frac{1}{2}}N_0^{-\frac{1}{2}}N_0^{\frac{d-1}{2}}\norm{g_1}{L^2}\norm{g_2}{L^2},}
which is sufficient after a multiplication by $\big( \frac{L_{\max}}{N_1}\big) ^{0+}\gec 1$.

(b) If $\overline{L}_{12}\lec N_0$, we take a different approach.
Observe that
\eqq{\big| |k_1|^2-|k_2|^2\big| =\big| \pm |k_0|-(\tau _0\pm |k_0|)+(\tau _1+|k_1|^2)-(\tau _2+|k_2|^2)\big| \lec N_0+L_{\max}}
in the integral domain, which implies $\big| |k_1|-|k_2|\big| \lec \frac{L_{\max}}{N_1}~(\ll N_1)$.
Therefore, by the orthogonality we can assume that $g_1$ and $g_2$ are localized (in $k$) to an annulus of radius $\sim N_1$ and thickness $\sim \frac{L_{\max}}{N_1}$ centered at the origin, as well as a ball of radius $\sim N_0$ (if $N_0\ll N_1$).
Also, we have seen in the proof of Lemma~\ref{lem_bs} (i) that $k_1$ satisfies
\eqq{\frac{k_0}{|k_0|}\cdot k_1=-\frac{\tau _0}{2|k_0|}+\frac{|k_0|}{2}+O(\frac{\overline{L}_{12}}{N_0}),}
thus belongs to a specific plate-like region of thickness $\sim \frac{\overline{L}_{12}}{N_0}~(\lec 1)$ for fixed $(\tau _0,k_0)$.
Now we apply Lemma~\ref{lem_annulustrip} (i) with $N\sim N_1$, $\mu \sim \frac{L_{\max}}{N_1}$, $\nu \sim \frac{\overline{L}_{12}}{N_0}$, and obtain that the integral is evaluated by
\eqq{\underline{L}_{12}^{\frac{1}{2}}\norm{f}{L^2}\norm{g_1}{L^2}\norm{g_2}{L^2}L_{\max}^{\frac{1}{4}}N_0^{\frac{d-2}{2}}.}
It then suffices to multiply it by $\big( \frac{L_{\max}}{N_1}\big) ^{0+}$.

\textbf{(II)} $L_{\max}=L_1$ or $L_2$.
Without loss of generality we assume $L_{\max}=L_2$.

(a) The case $\overline{L}_{01}\gec N_1$.
Applying Lemma~\ref{lem_bs} (ii) after the Cauchy-Schwarz in $\ze _2$, we have a sufficient bound of
\eqq{\norm{g_2}{L^2}\underline{L}_{01}^{\frac{1}{2}}\overline{L}_{01}^{\frac{1}{2}}N_1^{-\frac{1}{2}}N_0^{\frac{d-1}{2}}\norm{f}{L^2}\norm{g_1}{L^2}.}

(b) The case $\overline{L}_{01}\ll N_1\lec \frac{L_{\max}}{N_0}$.
Lemma~\ref{lem_bs} (ii) again implies a bound of
\eq{est_highhighm1}{\norm{g_2}{L^2}\underline{L}_{01}^{\frac{1}{2}}N_0^{\frac{d-1}{2}}\norm{f}{L^2}\norm{g_1}{L^2},}
which is not sufficient in general.
In the present case, however, we can multiply it by
\[ L_{\max}^{\frac{1}{4}}N_0^{-\frac{1}{4}}N_1^{-\frac{1}{4}}\gec 1\]
and obtain the claim.

(c) The case $\overline{L}_{01}\ll N_1$ and $L_{\max}\ll N_0N_1$.
We take the same approach as in the case \textbf{(I)}-(b).
Restricting $g_1$ and $g_2$ into an annulus of thickness $\sim \frac{L_{\max}}{N_1}$ and a ball of radius $\sim N_0$, we can apply Lemma~\ref{lem_annulustrip} (i) with $N\sim N_1$, $\mu \sim \frac{L_{\max}}{N_1}$ and $\nu \sim \frac{L_{\max}}{N_0}~(\ll N_1)$.
The bound is
\eq{est_highhighm2}{\norm{f}{L^2}L_1^{\frac{1}{2}}L_{\max}^{\frac{3}{4}}N_0^{\frac{d-3}{2}}\norm{g_1}{L^2}\norm{g_2}{L^2}.}
Finally, we take the $\frac{1}{2}$-interpolant between \eqref{est_highhighm1} and \eqref{est_highhighm2} and multiply it by $\big( \frac{L_{\max}}{N_1}\big) ^{0+}$ to obtain a suitable bound of
\eqq{L_0^{\frac{1}{4}}L_1^{\frac{1}{4}}L_2^{\frac{3}{8}+}N_0^{\frac{d-2}{2}}N_1^{0-}\norm{f}{L^2}\norm{g_1}{L^2}\norm{g_2}{L^2}.\qedhere}
\end{proof}


\subsection{Estimate for the low-modulation interactions}

We treat here the most dangerous case of low modulation, namely $1\ll N_0\lec N_1\sim N_2$ and $L_{\max}\ll N_1$.
As we have seen in Section~2, this case contains serious resonances which make it difficult to gain derivative (negative power of $N_1$).
We will need more careful case-by-case analysis including decomposition with respect to the angle between frequencies.

For any dimensions $d\ge 2$, it is possible to show some estimate yielding the control of the high-high interactions in the regularity range $2s\ge l+\frac{d}{2}>d-1$.
Moreover, if the spatial dimension is two, a little more consideration enables us to reach the border $l=0$, which includes the important regularity of the energy space.
Unfortunately, the same argument is not sufficient for the higher dimensional cases, and we leave the border case for $d\ge 3$ open.
However, some number theoretic method (\mbox{cf.} \cite{B07,dSPST}) might be applied to reach the border, and even lower regularities. 

Before the analysis, we recall that in the present case the following two identities are valid for $\ze _0$, $\ze _1$, $\ze _2$ in the integral region:
\eq{id-hh-1}{|k_1|-|k_2|&=\frac{1}{|k_1|+|k_2|}\big( \pm |k_0|-(\tau _0\pm |k_0|)+(\tau _1+|k_1|^2)-(\tau _2+|k_2|^2)\big) \\
&=\pm \frac{|k_0|}{|k_1|+|k_2|}+O(\frac{L_{\max}}{N_1})=O(\frac{N_0+L_{\max}}{N_1}),}
\eq{id-hh-2}{\frac{k_0}{|k_0|}\cdot k_1&=\pm \frac{1}{2}+\frac{|k_0|}{2}+\frac{1}{2|k_0|}\big( -(\tau _0\pm |k_0|)+(\tau _1+|k_1|^2)-(\tau _2+|k_2|^2)\big) \\
&=\pm \frac{1}{2}+\frac{|k_0|}{2}+O(\frac{L_{\max}}{N_0}).}

\begin{prop}[Low-modulation high-high interactions, $d\ge 3$]\label{prop_te_highhigh-l}
Let $d\ge 3$, and $f,g_1,g_2\in L^2_{\ze}(\R \times \Zg ^d)$ be functions as in Proposition~\ref{prop_te_highlow}.
Assume that $1\ll N_0\lec N_1\sim N_2$ and $L_{\max}\ll N_1$.
Then, we have
\eqq{\iint _{\ze _0=\ze _1-\ze _2}f(\ze _0)g_1(\ze _1)g_2(\ze _2)\lec L_{\max}^{\frac{3}{8}}L_{\med}^{\frac{3}{8}}N_0^{\frac{d-2}{2}}\norm{f}{L^2}\norm{g_1}{L^2}\norm{g_2}{L^2}.}
\end{prop}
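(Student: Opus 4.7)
My plan is to adapt the strategy of Proposition~\ref{prop_te_highhigh-m} to the deeper low-modulation regime $L_{\max}\ll N_1$. I will split into cases according to which $L_j$ attains $L_{\max}$, apply the Cauchy--Schwarz inequality to put the function with the maximal modulation into $L^2$ (in $\ze_0$ when $L_{\max}=L_0$, and in $\ze_2$ when $L_{\max}=L_1$ or $L_2$; the two choices in the latter case are symmetric), and thereby reduce the estimate to bounding, for each fixed $\ze_0$ (or $\ze_2$), the number of admissible $k_1$ (or $k_0$). The two geometric identities \eqref{id-hh-1}--\eqref{id-hh-2}, which are sharp precisely in the low-modulation regime, confine this $k_1$ to the intersection of a thin annulus around the origin of thickness $\mu\lec (N_0+L_{\max})/N_1$ and a thin plate of thickness $\nu\lec L_{\max}/N_0$ normal to $k_0/|k_0|$. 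Since $L_{\max}\ll N_1$ and $N_0\lec N_1$, we have $\mu\lec 1$, which is strictly sharper than in the middle-modulation case.

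Within each case I will subdivide according to the plate thickness, i.e.~whether $\nu\lec 1$ (equivalently $L_{\max}\lec N_0$, the deeply resonant regime) or $\nu\gg 1$ (i.e.~$N_0\ll L_{\max}\ll N_1$). When $\nu\lec 1$ I will apply Lemma~\ref{lem_annulustrip}(i) with $(N,\mu,\nu)=(N_1,(N_0+L_{\max})/N_1,L_{\max}/N_0)$ and ball radius $\sim N_0$, using the high-high orthogonal decomposition into frequency boxes of side $N_0$ as in the proof of Lemma~\ref{lem_bs}(i); moreover the identity \eqref{id-hh-1} permits a further orthogonal decomposition into dyadic annular shells of width $\sim 1$, so that only $O(1)$ pairs of shells contribute for each $\ze_0$ (or $\ze_2$). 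When $\nu\gg 1$, the plate alone is too coarse, and I will instead decompose $k_1$ into angular sectors about the $k_0$-direction and invoke Lemma~\ref{lem_annulustrip}(ii). In intermediate subcases where neither count individually yields the desired $(3/8,3/8)$ exponents in $(L_{\max},L_{\med})$, I will interpolate with the direct bilinear Strichartz bound from Lemma~\ref{lem_bs}, exactly as in the first half of the proof of Proposition~\ref{prop_te_highhigh-m}, to balance the $L$-exponents.

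The main obstacle will be the near-resonant subcases in which all three modulations are simultaneously small while $N_0\ll N_1$: then the Cauchy--Schwarz factor $\underline{L}^{1/2}$ provides little gain, so the full geometric content of the annulus-plate intersection must be used, with the parameters of Lemma~\ref{lem_annulustrip}(i) balanced carefully against the ratio $L_{\max}N_1/N_0^2$. This is precisely the point at which the bound loses half a derivative compared to its $\R^d$ counterpart in \cite{BHHT}, and the reason why the argument stops short of the endpoint $l=(d-2)/2$ in dimensions $d\ge 3$.
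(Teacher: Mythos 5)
Your overall plan — Cauchy--Schwarz to isolate one factor, then a lattice-point count for $k_1$ confined to the intersection of a thin annulus with a thin plate, using Lemma~\ref{lem_annulustrip} and the two identities \eqref{id-hh-1}--\eqref{id-hh-2} — follows the paper's strategy. However, there is a structural mistake in how you propose to invoke Lemma~\ref{lem_annulustrip}, and it breaks the argument exactly where the proposition is hardest.

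You propose to use part (i) of Lemma~\ref{lem_annulustrip} whenever the plate thickness $\nu=L_{\max}/N_0\lec 1$, reserving the angular decomposition and part (ii) for $\nu\gg 1$. This has it backwards. Part (i) is a worst-case count over all placements of the plate, and its factor $[N(\mu+\min\{\nu,1\})]^{1/2}$ reflects the possibility that the plate is nearly tangent to the sphere. With $N\sim N_1$, $\mu\sim(N_0+L_{\max})/N_1$, $\nu\sim L_{\max}/N_0\lec 1$, this factor is at least $N_0^{1/2}$, so (i) gives a count $\gec N_0^{d-\frac{3}{2}}$, whereas the proposition requires a count $\lec L_{\max}^{1/2}N_0^{d-2}$. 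You thus lose a factor $(N_0/L_{\max})^{1/2}$, which is unbounded precisely in the near-resonant regime $L_{\max}\ll N_0$ that you correctly flag as the main obstacle. Interpolation with the bilinear Strichartz estimate does not repair this: Lemma~\ref{lem_bs}(i) gives the even worse count $\underline{N}_{12}^{d-1}=N_0^{d-1}$ when $N_0\sim N_1$.

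The resolution in the paper is the opposite of what you outline. When $\nu\lec 1$ the angular information is essential. In the regime $1\ll N_0\ll N_1$ one checks $|\cos\angle(k_0,k_1)|\lec (L_{\max}+N_0^2)/(N_0N_1)\ll 1$, so the angle is bounded away from the tangent direction and Lemma~\ref{lem_annulustrip}(ii) with a \emph{fixed} $\th\sim 1$ already gives the sharp count $\lec\max\{\nu,1\}N_0^{d-2}$, killing the $[N(\mu+\min\{\nu,1\})]^{1/2}$ loss entirely. In the regime $N_0\sim N_1$, where the ball $B_{N_0}$ no longer trims the annulus, $\angle(k_0,k_1)$ can range down to the critical scale, and a \emph{dyadic} decomposition of $\angle(k_1,-k_2)\sim\phi$ combined with the refined estimate $\big||k_1|-|k_2|\big|=1+O(\phi^2+L_{\max}/N_1)$ is what makes (ii) applicable at each scale $\phi$ and lets the sum converge with the claimed exponents. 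By contrast, in the regime $\nu\gg 1$ (i.e.~$N_0\lec L_{\max}$), no dyadic angular decomposition is needed at all — a single application of Lemma~\ref{lem_annulustrip}(ii) with $\th\sim 1$ and a ball of size $N_0$ already yields the count $\min\{N_0^{d-1},L_{\max}N_0^{d-3}\}\lec L_{\max}^{1/2}N_0^{d-2}$. So the angular decomposition is not a remedy for coarse plates; it is the mechanism by which the count becomes sharp in the comparable-frequency case $N_0\sim N_1$, and it is indispensable there.

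A secondary issue: decomposing $|k_1|$, $|k_2|$ into shells of width $\sim 1$, as you suggest, is compatible with \eqref{id-hh-1} only in the sense that $O(1)$ pairs interact, but then Lemma~\ref{lem_annulustrip} must be applied with $\mu\sim 1$, not $\mu\sim(N_0+L_{\max})/N_1$; to get the smaller $\mu$, the orthogonal shells must have width $\sim(N_0+L_{\max})/N_1$ themselves. Either way, the conclusion above is unchanged: part (i) alone is insufficient, and the angular localization is the missing ingredient.
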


\begin{proof}
We consider several cases separately.

\textbf{(I)} $1\ll N_0\lec L_{\max}$.
Taking \eqref{id-hh-1} into account, we can assume that $|k_1|$ and $|k_2|$ are restricted to an interval of length $\sim \frac{L_{\max}}{N_1}~(\ll 1)$.
The orthogonality also admits us to further localize $k_1$ and $k_2$ to a ball of size $\sim N_0$.
This and \eqref{id-hh-2} then say that the $\frac{k_0}{|k_0|}$-component of $k_1$ is confined to an interval of size $\sim \min \shugo{N_0,\frac{L_{\max}}{N_0}}$.
We also have $N_0\ll N_1$ and
\eqq{|\cos \angle (k_0,k_1) |=|\frac{|k_0|^2+|k_1|^2-|k_2|^2}{2|k_0||k_1|}|\lec \frac{L_{\max}+N_0^2}{N_0N_1}\ll 1.}
Therefore, we first apply the Cauchy-Schwarz in $\ze _0$ and then count the number of possible $k_1$'s for fixed $k_0$, which is, from Lemma~\ref{lem_annulustrip} (ii) with $\th \sim 1$, estimated by
\eqq{\min \shugo{N_0^{d-1},L_{\max}N_0^{d-3}}\lec L_{\max}^{\frac{1}{2}}N_0^{d-2},}
to obtain
\eqq{\iint _{\ze _0=\ze _1-\ze _2}f(\ze _0)g_1(\ze _1)g_2(\ze _2)\lec \underline{L}_{12}^{\frac{1}{2}}L_{\max}^{\frac{1}{4}}N_0^{\frac{d-2}{2}}\norm{f}{L^2}\norm{g_1}{L^2}\norm{g_2}{L^2}}
as required.

\textbf{(II)} $L_{\max}\ll N_0\ll N_1$.
Since we also have $|\cos \angle (k_0,k_1) |\ll 1$, this case is almost parallel to (I), except that we take $\nu \ll 1$ in applying Lemma~\ref{lem_annulustrip}.
The resulting bound is
\eqq{\underline{L}_{12}^{\frac{1}{2}}N_0^{\frac{d-2}{2}}\norm{f}{L^2}\norm{g_1}{L^2}\norm{g_2}{L^2}.}

\begin{rem}
In the case of (II), it seems nontrivial to show a similar estimate but with prefactor $N_0^{\frac{d-2}{2}-}$ or $N_0^{\frac{d-2}{2}}(\frac{N_0}{N_1})^{0+}$, even if we pay any amount of $L_{\max}$.
This is exactly the reason why the border case is left open for $d\ge 3$.
\end{rem}

\textbf{(III)} $N_0\sim N_1$.
In this case we have $\cos \angle (k_0,k_1)\sim \cos \angle (-k_0,k_2)\sim +1$, hence $\pi \ge \angle (k_1,k_2)\gec 1$.
For the region where $\angle (k_1,k_2)\le \frac{\pi}{2}$, we just recall \eqref{id-hh-1} and \eqref{id-hh-2} and apply Lemma~\ref{lem_annulustrip}~(ii) with $\th \sim 1$, which implies a suitable bound of
\eqq{\underline{L}_{12}^{\frac{1}{2}}N_0^{\frac{d-2}{2}}\norm{f}{L^2}\norm{g_1}{L^2}\norm{g_2}{L^2}.}

Let us next deal with another easy case of $\angle (k_1,-k_2)\lec L_{\max}^{\frac{1}{2}}N_1^{-1}$.
Having restricted $|k_1|$ and $|k_2|$ to an annulus, we may further restrict them into a ball of radius $\sim L_{\max}^{\frac{1}{2}}$.
Then, the number of $k_1$'s for fixed $k_0$ is bounded by $L_{\max}^{\frac{d-1}{2}}\lec L_{\max}^{\frac{1}{2}}N_0^{\frac{d-2}{2}}$, and thus
\eqq{\iint _{\ze _0=\ze _1-\ze _2}f(\ze _0)g_1(\ze _1)g_2(\ze _2)\lec \underline{L}_{12}^{\frac{1}{2}}L_{\max}^{\frac{1}{4}}N_0^{\frac{d-2}{4}}\norm{f}{L^2}\norm{g_1}{L^2}\norm{g_2}{L^2}.}

For the remaining cases, namely $L_{\max}^{\frac{1}{2}}N_1^{-1}\ll \angle (k_1,-k_2)\le \frac{\pi}{2}$, we treat separately each of the integral in the region $\angle (k_1,-k_2)\sim \phi$ for dyadic $L_{\max}^{\frac{1}{2}}N_1^{-1}\ll \phi \le 1$.
Since we have
\eqq{1-\Big( \frac{|k_0|}{|k_1|+|k_2|}\Big) ^2=\frac{2|k_1||k_2|}{(|k_1|+|k_2|)^2}\big( 1+\cos \angle (k_1,k_2)\big) \sim \phi ^2,}
\eqref{id-hh-1} actually says that $||k_1|-|k_2||=1+O(\phi ^2+\frac{L_{\max}}{N_1})$.
We also observe that
\eqq{(\frac{\pi}{2}\ge )~\angle (k_0,k_1)\sim \sin \angle (k_0,k_1)=\frac{|k_2|}{|k_0|}\sin \angle (k_1,k_2)\sim \phi.}
We can thus apply Lemma~\ref{lem_annulustrip} (ii) with $N\sim N_1\sim N_0$, $\mu \sim \phi ^2+\frac{L_{\max}}{N_0}$, $\nu \sim \frac{L_{\max}}{N_0}$, and $\th \sim \phi$.
Note that the condition $\th \gg (\frac{\mu +\min \shugo{\nu ,1}}{N})^{\frac{1}{2}}$ is satisfied if $\phi \gg L_{\max}^{\frac{1}{2}}N_1^{-1}$.
We finally obtain the following bounds of the number of $k_1$'s for fixed $k_0$:
\eqq{(N_0\phi )^{d-2}(\phi ^{-1}\cdot \phi ^2 +1)\sim (N_0\phi )^{d-2}}
for $\phi \gg L_{\max}^{\frac{1}{2}}N_0^{-\frac{1}{2}}$,
\eqq{(N_0\phi )^{d-2}(\phi ^{-1}\frac{L_{\max}}{N_0} +1)\sim (N_0\phi )^{d-2}}
for $L_{\max}^{\frac{1}{2}}N_0^{-\frac{1}{2}}\gec \phi \gg L_{\max}N_0^{-1}$, and
\eqq{(N_0\phi )^{d-2}(\phi ^{-1}\frac{L_{\max}}{N_0} +1)\sim (N_0\phi )^{d-2}\phi ^{-1}\frac{L_{\max}}{N_0}\lec L_{\max}^{\frac{1}{2}}(N_0\phi )^{d-2}}
for $L_{\max}N_0^{-1}\gec \phi \gg L_{\max}^{\frac{1}{2}}N_0^{-1}$, which imply the corresponding bound of $\iint fg_1g_2$ for each $\phi$.
It is then sufficient to sum up these estimates over dyadic $\phi \le 1$.
\end{proof}

\begin{prop}[Low-modulation high-high interactions, $d=2$]\label{prop_te_highhigh-l_2d}
Let $d=2$.
We do not decompose $f$ in $N_0$, and let $f,g_1,g_2\in L^2_{\ze}(\R \times \Zg ^d)$ be real-valued nonnegative functions with the support properties
\eqq{\Supp{f}{\shugo{|k|\gg 1}\cap \FR{W}_{L_0}^\pm},\quad \Supp{g_j}{\FR{P}_{N_j}\cap \FR{S}_{L_j}},\quad j=1,2.}
Assume that $1\ll N_1\sim N_2$ and $L_{\max}\ll N_1$.
Then, we have
\eqq{\iint _{\ze _0=\ze _1-\ze _2}f(\ze _0)g_1(\ze _1)g_2(\ze _2)\lec L_{\max}^{\frac{3}{8}}L_{\med}^{\frac{3}{8}}\norm{f}{L^2}\norm{g_1}{L^2}\norm{g_2}{L^2}.}
\end{prop}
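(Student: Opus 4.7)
The proof will closely parallel that of Proposition~\ref{prop_te_highhigh-l} (the $d\ge 3$ case), specialized to $d = 2$ and with extra care because $f$ is not dyadically restricted in the wave frequency $N_0$. The first step is to decompose $f = \sum_{N_0 \gg 1} f_{N_0}$ with $f_{N_0} := f\,\chf{\FR{P}_{N_0}}$; the convolution constraint $k_0 = k_1 - k_2$ together with $|k_1|,|k_2|\sim N_1$ confines the sum to $1 \ll N_0 \lec N_1$, so at most $O(\log N_1)$ dyadic scales are active.

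For each dyadic $N_0$, I would follow the same three-regime split as in Proposition~\ref{prop_te_highhigh-l}: (I) $1\ll N_0 \lec L_{\max}$, (II) $L_{\max} \ll N_0 \ll N_1$, and (III) $N_0 \sim N_1$. The identities \eqref{id-hh-1}--\eqref{id-hh-2} localize the admissible $k_1$ (for fixed $k_0$, with $k_2 = k_1 - k_0$) to the intersection of a thin annulus around the origin of thickness $\sim L_{\max}/N_1$ and a plate-like region of thickness $\sim \min\{N_0, L_{\max}/N_0\}$. After Cauchy--Schwarz in $\ze_0$, combining Lemma~\ref{lem_bs}(i) with a lattice-point count from Lemma~\ref{lem_annulustrip} (whose factor $N_0^{(d-2)/2}$ is trivial in two dimensions) should yield, for each dyadic piece, a bound of the form
\[
\iint_{\ze_0 = \ze_1 - \ze_2} f_{N_0}(\ze_0)\, g_1(\ze_1)\, g_2(\ze_2) \lec L_{\max}^{3/8} L_{\med}^{3/8}\,\tnorm{f_{N_0}}{L^2}\tnorm{g_1}{L^2}\tnorm{g_2}{L^2}.
\]
Case~(III) contributes only $O(1)$ dyadic values and is immediate.

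The main obstacle is to sum these estimates over $N_0$ in cases (I) and (II) without picking up a $\sqrt{\log N_1}$ factor from a naive Cauchy--Schwarz in $N_0$, which would spoil the stated sharp exponents. The plan is to sharpen the per-$N_0$ estimates to carry an $\ell^2$-summable weight in $N_0$: in Case~(I), a finer angular localization of the annulus-plate count (in the spirit of Case~(III) of Proposition~\ref{prop_te_highhigh-l}) should produce a factor $(N_0/L_{\max})^\eta$ for some $\eta>0$; in Case~(II), exploiting the transversality $|\cos\angle(k_0,k_1)| \sim N_0/N_1 \ll 1$ and invoking Lemma~\ref{lem_annulustrip}(ii) with $\theta \sim N_0/N_1$ (after rotating coordinates so that $\xi_0$ lies along $k_0^\perp$) should produce a factor $(L_{\max}/N_0)^\eta$. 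Combined with the identity $\sum_{N_0}\tnorm{f_{N_0}}{L^2}^2 = \tnorm{f}{L^2}^2$, a Cauchy--Schwarz in $N_0$ then closes the estimate without any logarithmic loss.
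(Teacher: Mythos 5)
Your high-level framing — that the $d=2$ case follows the same three-regime split as Proposition~\ref{prop_te_highhigh-l}, with the extra task of avoiding a $\log N_1$ loss now that $N_0^{\frac{d-2}{2}}$ provides no decay — correctly identifies the difficulty. But the two mechanisms you propose for beating the log are not the ones the paper uses, and both have genuine problems.

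In Case~(II) ($L_{\max}\ll N_0\ll N_1$), you decompose $f$ dyadically in $N_0$ and hope to extract a per-scale factor $(L_{\max}/N_0)^\eta$ from a sharper lattice-point count. This cannot work: for fixed $\ze_0$ the admissible $k_1$ are confined to the intersection of a thin annulus (thickness $\lec 1$) with a thin slab (thickness $\lec 1$), and in $d=2$ the lattice count in such a region is $O(1)$ \emph{uniformly} in $N_0$ — it is an integer and does not drop below that when the region thins further, so there is no $(L_{\max}/N_0)^\eta$ to harvest. Moreover, the particular application you describe is a misuse of Lemma~\ref{lem_annulustrip}(ii): in that lemma the slab is normal to $\xi_0$ and the cone $K_\th$ is \emph{around} $\xi_0$ with $\th\le\pi/4$; the slab from \eqref{id-hh-2} is normal to $k_0$ and the relevant $k_1$ lie near $k_0^\perp$, so rotating $\xi_0$ onto $k_0^\perp$ puts the slab in the wrong direction. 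What the paper actually does in Case~(II) is \emph{not} decompose $f$ in $N_0$ at all: it applies the Cauchy--Schwarz in $\ze_0$ directly over the whole region $\{L_{\max}\ll|k_0|\ll N_1\}$, using the uniform $O(\underline{L}_{12})$ bound on $|E(\ze_0)|$. The $N_0$-decomposition you reach for is precisely what introduces the log, and the paper's point is to avoid it.

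In Case~(III) ($N_0\sim N_1$), your claim that it ``contributes only $O(1)$ dyadic values and is immediate'' confuses the trivial sum over $N_0$ with the nontrivial estimate at a fixed $N_0$. This is in fact the hardest regime and the one where all the new $d=2$ work lives. After the same angular slicing as in the $d\ge 3$ proof, the critical range $L_{\max}^{1/2}N_1^{-1/2}\ll\angle(k_1,-k_2)\le\pi/2$ contains $\sim\log N_1$ dyadic angular scales $\phi$, each giving a per-scale bound without decay, so a naive sum over $\phi$ still loses a $\sqrt{\log N_1}$. The paper overcomes this with the angular orthogonality Lemma~\ref{lem_orthogonality}: it simultaneously decomposes $k_1,k_2$ into angular sectors $D_{j}$ of aperture $\sim\sqrt{L_{\max}/N_1}$ and $k_0$ into corresponding polar cells $\ti{D}_{j_r,j_\th}$, and shows a two-to-one correspondence $\kappa:\Sc{J}\to\ti{\Sc{J}}$ between pairs $(j_1,j_2)$ and cells $(j_r,j_\th)$. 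This allows a single Cauchy--Schwarz in $(j_1,j_2)$ (equivalently in $(j_r,j_\th)$) that recombines $\sum\|f_{j_r,j_\th}\|^2=\|f\|^2$ with no logarithmic loss. Your proposal contains no analogue of this orthogonality mechanism, and without it Case~(III) does not close at the stated exponents.

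For Case~(I), your plan is more complicated than necessary but not obviously wrong: the paper's per-$N_0$ bound already carries the factor $\min\{N_0^{1/2},(L_{\max}/N_0)^{1/2}\}$ from Lemma~\ref{lem_annulustrip}(i)/(ii) applied with $\nu\sim\min\{N_0,L_{\max}/N_0\}$, which is $\ell^2$-summable over $N_0\lec L_{\max}$ without further angular refinement.
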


\begin{proof}
We follow the proof of the previous proposition.

\textbf{(I)} The case $1\ll |k_0|\lec L_{\max}$.
In this case we temporarily decompose $f$ in $N_0$ for the estimate.
Applying Lemma~\ref{lem_annulustrip} (ii) with $N\sim N_1$, $\mu \sim \frac{L_{\max}}{N_1}$, $\nu \sim \min \shugo{ N_0,\frac{L_{\max}}{N_0}}$, and $\th \sim 1$, we have a bound of
\eqq{\underline{L}_{12}^{\frac{1}{2}}\min \shugo{ N_0^{\frac{1}{2}},\,(\frac{L_{\max}}{N_0})^{\frac{1}{2}}}\norm{f}{L^2}\norm{g_1}{L^2}\norm{g_2}{L^2}.}
Summing this over dyadic $N_0$, we obtain the desired estimate.

\textbf{(II)} The case $L_{\max}\ll |k_0|\ll N_1$.
This time we can employ Lemma~\ref{lem_annulustrip} (ii) with $\nu \sim 1$.
The resulting bound is
\eqq{\underline{L}_{12}^{\frac{1}{2}}\norm{f}{L^2}\norm{g_1}{L^2}\norm{g_2}{L^2}.}

\textbf{(III)} $|k_0|\sim N_1$.
The two cases of $\angle (k_1,k_2)\le \frac{\pi}{2}$ and $\angle (k_1,-k_2)\lec L_{\max}^{\frac{1}{2}}N_1^{-1}$ are treated exactly in the same way as for $d\ge 3$.
For the region $L_{\max}^{\frac{1}{2}}N_1^{-1}\ll \angle (k_1,-k_2)\lec L_{\max}N_1^{-1}$, we divide dyadically with respect to $\angle (k_1,-k_2)$ and make the same argument as for $d\ge 3$.
The resulting bound for $\angle (k_1,-k_2)\sim \phi$ is
\eqq{\underline{L}_{12}^{\frac{1}{2}}\big( \phi ^{-1}\frac{L_{\max}}{N_1}\big) ^{\frac{1}{2}}\norm{f}{L^2}\norm{g_1}{L^2}\norm{g_2}{L^2},}
which can be summed over $\phi \gg L_{\max}^{\frac{1}{2}}N_1^{-1}$ to yield the claimed estimate.
For the middle angle $L_{\max}N_1^{-1}\ll \angle (k_1,-k_2)\lec L_{\max}^{\frac{1}{2}}N_1^{-\frac{1}{2}}$, we do not decompose with respect to the angle and use Lemma~\ref{lem_annulustrip} (ii) directly.
Note that if $k_0$ is fixed, $k_1$ is confined to the intersection of specific annulus and band, in which $\angle (k_0,k_1)$ does not vary so much.
Therefore, for each $k_0$, we can apply Lemma~\ref{lem_annulustrip} (ii) with some single value of dyadic $\th$ between $L_{\max}N_1^{-1}$ and $L_{\max}^{\frac{1}{2}}N_1^{-\frac{1}{2}}$.
The result is 
\eqq{\iint _{\ze _0=\ze _1-\ze _2}f(\ze _0)g_1(\ze _1)g_2(\ze _2)\lec \underline{L}_{12}^{\frac{1}{2}}\norm{f}{L^2}\norm{g_1}{L^2}\norm{g_2}{L^2}.}

Only the case $L_{\max}^{\frac{1}{2}}N_1^{-\frac{1}{2}}\ll \angle (k_1,-k_2)\le \frac{\pi}{2}$ is troublesome.
In order to avoid a logarithmic divergence (\mbox{i.e.} estimate with $\log N_1$), we decompose all of $k_0$, $k_1$, and $k_2$ as follows.
First, taking \eqref{id-hh-1} into account, restrict $k_1$ and $k_2$ to a common annulus $\shugo{N\le |k|\le N+10}$, where $N\sim N_1$.
Then we make an angular decomposition of angular aperture $\sim \sqrt{L_{\max}/N_1}$; define
\eqq{D_j:=\Shugo{(r\cos \th ,r\sin \th )\in \R ^2}{N\le r\le N+10,\,2\pi (j-1)\le \th \sqrt{\tfrac{N_1}{L_{\max}}}\le 2\pi (j+1)}}
for $j=0,1,\dots ,\sqrt{N_1/L_{\max}}-1$, and localize each of $k_1$ and $k_2$ to one of them.
From the assumption on $\angle (k_1,-k_2)$, we only need to consider $k_1\in D_{j_1}$, $k_2\in D_{j_2}$ with
\eqq{(j_1,j_2)\in \Sc{J}:=\Big\{ 0\le j_1,j_2\le \sqrt{\tfrac{N_1}{L_{\max}}}-1,\; \tfrac{1}{4}\sqrt{\tfrac{N_1}{L_{\max}}}-2\le d[j_1,j_2]\le \tfrac{1}{2}\sqrt{\tfrac{N_1}{L_{\max}}}-100\Big\},}
where $d[j_1,j_2]:=\min \shugo{|j_1-j_2|,\,\sqrt{N_1/L_{\max}}-|j_1-j_2|}$.
Also, we localize $k_0$ to a similar region
\eqq{\ti{D}_{j_r,j_\th }&:=\big\{ (r\cos \th ,r\sin \th )\in \R ^2\big| \;2\pi (j_\th -1)\le \th \sqrt{\tfrac{N_1}{L_{\max}}}\le 2\pi (j_\th +1),\\
&\hspace{80pt} 2N\sin \Big[ \pi (j_r -1)\sqrt{\tfrac{L_{\max}}{N_1}}\Big] \le r\le 2N\sin \Big[ \pi (j_r +1)\sqrt{\tfrac{L_{\max}}{N_1}}\Big] \big\} }
for some
\eqq{(j_r,j_\th )\in \ti{\Sc{J}}:=\shugo{1,2,\dots ,\tfrac{1}{2}\sqrt{\tfrac{N_1}{L_{\max}}}-1}\times \shugo{0,1,\dots, \sqrt{\tfrac{N_1}{L_{\max}}}-1}.}

The following is the key orthogonality lemma.
\begin{lem}[Orthogonality]\label{lem_orthogonality}
Assume that $N\sim N_1\gg 1$, $L_{\max}\ge 1$, and that $N_1/L_{\max}$ is sufficiently large.
Then, there is a two-to-one mapping $\kappa =(\kappa _r,\kappa _\th ):\Sc{J}\to \ti{\Sc{J}}$ such that 
\eqq{\Shugo{k_1-k_2}{k_1\in D_{j_1},\,k_2\in D_{j_2}}\subset \bigcup _{(j_r,j_\th )\in B(j_1,j_2)}\ti{D}_{j_r,j_\th}}
for any $(j_1,j_2)\in \Sc{J}$, where
\eqq{B(j_1,j_2):=\Shugo{(j_r,j_\th )\in \ti{\Sc{J}}}{|\kappa _r(j_1,j_2)-j_r|\le 10,\,d[\kappa _\th (j_1,j_2),j_\th ]\le C}}
with a large constant $C>0$.
\end{lem}

\begin{proof}
Let us first define $\kappa$ (see Figure~2).
\begin{figure}
\begin{center}
\input{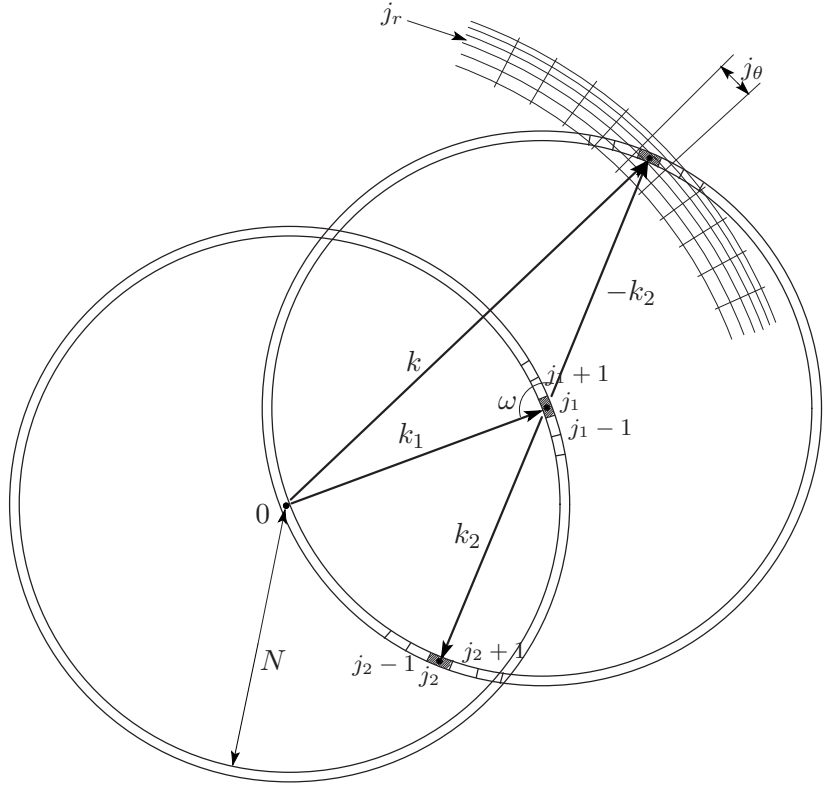}
\end{center}
\label{fig_orthogonality}
\caption{Correspondence between $(j_1,j_2)$ and $(j_r,j_\th)$ in Lemma~\ref{lem_orthogonality}.
With $N$ fixed, $j_r$ (length of $k=k_1-k_2$) is (essentially uniquely) determined by $d[j_1,j_2]$ (angle between $k_1$ and $k_2$, denoted in the figure by $\omega$).
Then, with $d[j_1,j_2]$ fixed, (essentially) two candidates for $j_\th$ (angle of $k$) are determined by $j_1$.}
\end{figure}
As the representative element of $D_j$, let
\[ k_*(j):=(N\cos 2\pi j\sqrt{\tfrac{L_{\max}}{N_1}},N\sin 2\pi j\sqrt{\tfrac{L_{\max}}{N_1}}).\]
We see that
\eqq{|k_*(j_1)-k_*(j_2)|=2N\sin \Big[ \pi d[j_1,j_2]\sqrt{\tfrac{L_{\max}}{N_1}}\Big] ,}
so define $\kappa _r(j_1,j_2):=d[j_1,j_2]$.
For $\kappa _\th$, define first $\kappa _\th (j_1,0)$ as an arbitrary integer in $[0,\sqrt{\frac{L_{\max}}{N_1}}-1]$ such that $k_*(j_1)-k_*(0)\in \ti{D}_{d[j_1,0],\kappa _\th (j_1,0)}$, and then
\eqq{\kappa _\th (j_1,j_2):=j_2+ \begin{cases} \kappa _\th (j_1-j_2,0) &(j_1\ge j_2),\\ \kappa _\th (j_1-j_2+\sqrt{\tfrac{N_1}{L_{\max}}},0) &(j_1<j_2),\end{cases}}
for $j_2\neq 0$.
If $\kappa _\th \ge \sqrt{\tfrac{N_1}{L_{\max}}}$, we re-define it by $\kappa _\th -\sqrt{\tfrac{N_1}{L_{\max}}}$.
Note that $k_*(j_1)-k_*(j_2)\in \ti{D}_{\kappa (j_1,j_2)}$.

It is clear from the definition that there are exactly two elements in $\Sc{J}$, or nothing, which satisfy $\kappa (j_1,j_2)=(j_r,j_\th )$ for a specific $(j_r,j_\th )\in \ti{\Sc{J}}$.

Let $k_1$ and $k_2$ be arbitrary elements in $D_{j_1}$ and $D_{j_2}$, respectively.
We observe that
\eqq{2N\sin \Big[ \pi (d[j_1,j_2]-2)\sqrt{\tfrac{L_{\max}}{N_1}}\Big] \le |k_1-k_2|\le 2(N+10)\sin \Big[ \pi (d[j_1,j_2]+2)\sqrt{\tfrac{L_{\max}}{N_1}}\Big] .}
Under the assumption that $d[j_1,j_2]$ ranges between $\frac{1}{4}\sqrt{\tfrac{N_1}{L_{\max}}}-2$ and $\tfrac{1}{2}\sqrt{\tfrac{N_1}{L_{\max}}}-100$, it is easily verified that
\eqq{
2(N+10)\sin \Big[ \pi (d[j_1,j_2]+2)\sqrt{\tfrac{L_{\max}}{N_1}}\Big] \le 2N\sin \Big[ \pi (d[j_1,j_2]+10)\sqrt{\tfrac{L_{\max}}{N_1}}\Big] .}
Moreover, we see that
\eqq{|(k_1-k_2)-(k_*(j_1)-k_*(j_2))|\le \mathrm{diam} D_{j_1}+\mathrm{diam} D_{j_2}\sim N_1\sqrt{\tfrac{L_{\max}}{N_1}},}
thus the angle between $k_1-k_2$ and $k_*(j_1)-k_*(j_2)$ is $O(\sqrt{\frac{L_{\max}}{N_1}})$.
The claim follows from these facts.
\end{proof}

Let us go back to the proof of Proposition~\ref{prop_te_highhigh-l_2d}.
Thanks to the orthogonality lemma, we are allowed to focus on a situation that $k_0,k_1,k_2$ are localized to some specific $\ti{D}_{j_r,j_\th}$, $D_{j_1}$, and $D_{j_2}$ respectively.
In fact, an application of the Cauchy-Schwarz inequality in $j_1,j_2$ yields the desired estimate from decomposed estimates with respect to $j_1$, $j_2$, and $(j_r,j_\th)$.
Under this localization, the angle between $k_1$ and $-k_2$ is comparable to some $\sqrt{\frac{L_{\max}}{N_1}}\ll \phi \le 1$.
Therefore, we restrict $|k_1|$ and $|k_2|$ further onto some intervals of length $\sim \phi ^2$ and follow the argument for the case $d\ge 3$, obtaining the estimate
\eqq{\iint _{\ze _0=\ze _1-\ze _2}f(\ze _0)g_1(\ze _1)g_2(\ze _2)\lec L_{\med}^{\frac{1}{2}}\norm{f}{L^2}\norm{g_1}{L^2}\norm{g_2}{L^2}}
for this case.
\end{proof}


\section{Proof of Theorem~\ref{thm_main}}\label{sec_pf-main}

In this section we apply the contraction mapping argument to the integral equations \eqref{IE} to prove the local well-posedness, Theorem~\ref{thm_main3}.
Define the Duhamel operators
\eqs{\Sc{I}_SF(t):=-i\int _0^te^{i(t-t')\Delta}F(t')\,dt',\qquad \Sc{I}_{W_{\pm}}G(t):=i\int _0^te^{\mp i(t-t')\LR{\nabla}}G(t')\,dt'.}

The following linear estimates will be used.
Positive power of $\de$ included in these estimates enables us to deal with data with arbitrary size and verify the uniqueness of solutions in the Bourgain spaces.
We will use a bump function $\psi _\de(t):=\psi (t/\de )$, where $\psi \in C^\I_0(\R)$ is a function with the same property as $\eta$ given in Definition~\ref{def_LPdec}.

\begin{lem}[Linear estimates]\label{lem_linear}
Let $s\in \R$.
For any $0<\de \le 1$ and $0<b\le \frac{1}{2}$, the following estimates hold.
The implicit constants do not depend on $s$, $\de$.
\begin{gather}
\norm{\psi _\de e^{it\Delta}u_0}{X_S^{s,\frac{1}{2},1}}\lec \norm{u_0}{H^s},\qquad \norm{\psi _\de e^{-it\LR{\nabla}}w_0}{X_{W_+}^{s,\frac{1}{2},1}}\lec \norm{w_0}{H^s},\label{est_lin_sol}\\
\norm{\psi _\de u}{X^{s,b,1}_S}\lec \de ^{\frac{1}{2}-b}\norm{u}{X^{s,\frac{1}{2},1}_S},\qquad \norm{\psi _\de w}{X^{s,b,1}_{W_{\pm}}}\lec \de ^{\frac{1}{2}-b}\norm{w}{X^{s,\frac{1}{2},1}_{W_{\pm}}},\label{est_stability}\\
\norm{\psi _\de \Sc{I}_SF}{X^{s,\frac{1}{2},1}_S}\lec \de ^{\frac{1}{2}-b}\norm{F}{X^{s,-b,1}_S},\qquad \norm{\psi _\de \Sc{I}_{W_{\pm}}G}{X^{s,\frac{1}{2},1}_{W_{\pm}}}\lec \de ^{\frac{1}{2}-b}\norm{G}{X^{s,-b,1}_{W_{\pm}}}.\label{est_lin_duh}
\end{gather}
\end{lem}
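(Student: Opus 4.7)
The proof will proceed by establishing each of the three estimates in turn, starting from the free-solution bound \eqref{est_lin_sol} and bootstrapping upward. I will only treat the Schr\"odinger case in each step; the $W_{\pm}$ cases are identical after replacing $|k|^2$ by $\pm|k|$.

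For \eqref{est_lin_sol}, I compute directly that $\F_{t,x}(\psi_\de e^{it\Delta}u_0)(\tau,k)=\hhat{\psi}_\de(\tau+|k|^2)\hhat{u_0}(k)$, which factors. By Plancherel,
\eqq{\norm{P^S_{N,L}(\psi_\de e^{it\Delta}u_0)}{L^2_{t,x}}\lec \norm{P_Nu_0}{L^2_x}\cdot \norm{\eta_L(\sigma)\hhat{\psi}_\de(\sigma)}{L^2_\sigma}.}
Since $\hhat{\psi}_\de(\sigma)=\de \hhat{\psi}(\de \sigma)$ and $\hhat{\psi}$ is Schwartz, a direct computation yields $L^{\frac{1}{2}}\norm{\eta_L\hhat{\psi}_\de}{L^2}\lec \min\shugo{\de L,\,(\de L)^{-M}}$ for any $M$, which is $\ell^1$-summable in dyadic $L$ uniformly in $\de\in(0,1]$. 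Summing in $L$ gives \eqref{est_lin_sol}, and the same computation with $\de\to 0$ gives the refined gain $\de^{\frac{1}{2}-b}$ once the weight $L^b$ is inserted in place of $L^{1/2}$.

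For the time-localization estimate \eqref{est_stability}, I represent an arbitrary $u\in X^{s,\frac{1}{2},1}_S$ as a continuous superposition of free Schr\"odinger solutions by setting $\hhat{g_\sigma}(k):=\ti{u}(\sigma-|k|^2,k)$, so that $u(t,x)=(2\pi)^{-1}\int_{\R}e^{i\sigma t}[e^{it\Delta}g_\sigma](x)\,d\sigma$. Multiplying by $\psi_\de$, applying $P^S_{N,L}$, and using Minkowski in $\sigma$ reduces the estimate to the free case already handled, at the cost of an $L^1_\sigma$ integral which is controlled by a second application of Cauchy--Schwarz together with the weight $\LR{\sigma-(-|k|^2)}^{1/2}$ implicit in the $X^{s,\frac{1}{2},1}_S$ norm. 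The decay factor $\de^{\frac{1}{2}-b}$ comes from the scaled estimate on $\hhat{\psi}_\de$ noted above. The $\ell^1$-Besov summability in $L$ is preserved because convolution with $\hhat{\psi}_\de$ essentially commutes with dyadic modulation decomposition up to a rapidly decaying tail.

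For the Duhamel estimate \eqref{est_lin_duh}, I carry out the standard Bourgain decomposition: evaluating the time integral in $\Sc{I}_SF$ after Fourier-expanding $F$ yields, on the Fourier side,
\eqq{\F_x[\Sc{I}_SF(t,\cdot)](k)=\frac{-1}{2\pi}\int _{\R}\frac{e^{i\tau t}-e^{-i|k|^2t}}{\tau+|k|^2}\ti{F}(\tau,k)\,d\tau,}
which I split by a smooth cutoff at $|\tau+|k|^2|\sim \de^{-1}$. On the high-modulation part, the denominator $(\tau+|k|^2)^{-1}$ absorbs one power of modulation, both summands are treated separately (the second being a free solution reducing to \eqref{est_lin_sol} applied to a datum of the right size), and the $\ell^1_L$ summation is carried out as in step 1 after time-truncation by $\psi_\de$. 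On the low-modulation part, I Taylor-expand the factor $e^{i\tau t}-e^{-i|k|^2t}=e^{-i|k|^2t}(e^{i(\tau+|k|^2)t}-1)$, use $|t|\lec\de$ to handle $(\tau+|k|^2)t=O(1)$, and thereby rewrite the contribution as a convergent series of free-solution Duhamel terms, each estimated by \eqref{est_lin_sol}.

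The main technical difficulty across all three steps is the bookkeeping required by the $\ell^1$-Besov structure in the modulation variable: one must ensure that convolution against $\hhat{\psi}_\de$ and the smooth splitting in the Duhamel identity never produce logarithmic divergences when summed over dyadic $L$. This is controlled in every step by exploiting the Schwartz decay $|\hhat{\psi}_\de(\sigma)|\lec \de\LR{\de\sigma}^{-M}$ to bound dyadic sums over $L$ by geometric series, combined with a dyadic decomposition of $\tau''\in\R$ (the convolution variable) that matches the dyadic scales of $L$, so that only $O(1)$ values of $L'$ contribute to any given $L$ once $|\tau''|$ is fixed dyadically.
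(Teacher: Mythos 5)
Your treatments of \eqref{est_lin_sol} and \eqref{est_lin_duh} are essentially the same as the paper's: the first reduces to the Besov regularity of $\psi_\de$, and the third uses the standard Bourgain splitting of the Duhamel kernel at modulation $\sim\de^{-1}$, with the Taylor series on the low-modulation piece and the two high-modulation summands handled as a free evolution and a time-truncated term. However, your proof of \eqref{est_stability} via the superposition-of-free-solutions representation plus Minkowski in $\sigma$ has a genuine gap.

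The per-$\sigma$ factor you are implicitly using,
\[
\norm{\psi_\de\, e^{i\sigma t}e^{it\Delta}g_\sigma}{X^{s,b,1}_S}
=\norm{g_\sigma}{H^s}\,\sum_{L}L^b\,\norm{\eta_L(\rho)\,\hhat{\psi}_\de(\rho-\sigma)}{L^2_\rho},
\]
equals $\sim\de^{\frac12-b}$ only when $|\sigma|\lec\de^{-1}$. When $|\sigma|\gg\de^{-1}$ the support of $\hhat{\psi}_\de(\cdot-\sigma)$ (a window of length $\de^{-1}$, with rapidly decaying tails) sits inside the dyadic block $L\sim|\sigma|$, so the $L$-sum is dominated by $L\sim|\sigma|$ and the factor is $\sim\de^{\frac12}|\sigma|^b=\de^{\frac12-b}(\de|\sigma|)^b$, which is larger by $(\de|\sigma|)^b$. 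After Minkowski in $\sigma$ and a Cauchy--Schwarz in $\sigma$ (whether in dyadic blocks or with the weight $\LR{\sigma}^{1/2}$) this excess survives, and one only obtains, for each fixed $N$,
\[
\sum_LL^b\norm{P^S_{N,L}(\psi_\de u)}{L^2}\lec \de^{\frac12-b}\sum_{L'}(\de L')^b L'^{\frac12}\norm{P^S_{N,L'}u}{L^2},
\]
which is off by $(\de L')^b$ for every input modulation scale $L'\gg\de^{-1}$. This is not an artifact of a crude estimate. Take, say, $\ti{u}(\tau,k)=\de_{k_0}(k)\chf{[L_2,2L_2]}(\tau+|k_0|^2)$ with $L_2\gg\de^{-1}$: then one computes directly $\tnorm{\psi_\de u}{X^{s,b,1}_S}\sim \LR{k_0}^sL_2^{b+\frac12}$ and $\de^{\frac12-b}\tnorm{u}{X^{s,\frac12,1}_S}\sim\de^{\frac12-b}\LR{k_0}^sL_2$ (so the claimed inequality holds, with room), while the Minkowski-then-Cauchy--Schwarz bound you describe produces $\de^{\frac12}\LR{k_0}^sL_2^{1+b}$, larger than the right-hand side by $(\de L_2)^b\gg 1$.

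The reason is that Minkowski in $\sigma$ destroys the $L^2_\rho$-orthogonality of the contributions of different $\sigma$'s: within a dyadic $\sigma$-window of length $L'$ the outputs only overlap on $\rho$-intervals of size $\de^{-1}\ll L'$, so the true $L^2_\rho$-norm of the superposition is much smaller than the $L^1_\sigma$-superposition of the $L^2_\rho$-norms. The paper avoids this by never applying Minkowski: it decomposes both $\hhat{\psi}_\de$ (at modulation $L_1\lec\de^{-1}$) and $\ti{u}$ (at $L_2$) dyadically, observes that the output modulation must satisfy $L\lec\max\{L_1,L_2\}$, and applies Young/H\"older directly to the convolution in $\tau$; this keeps the orthogonality and yields the sharp $\de^{\frac12-b}$. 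Your heuristic remark that convolution with $\hhat{\psi}_\de$ commutes with the dyadic modulation decomposition up to a rapidly decaying tail is exactly the right intuition, but it has to be exploited at the level of the convolution (i.e.\ a Cauchy--Schwarz in $\sigma$ at the physically correct scale $\de^{-1}$, or equivalently $\tnorm{\hhat{\psi}_\de}{L^1}\sim1$), not after Minkowski has already been applied.
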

\begin{rem}\label{rem_wavemoissho}
We also have
\eqq{\norm{\psi _\de \int _0^te^{\mp i(t-t')|\nabla |}G(t')\,dt'}{X^{s,-\frac{1}{2},1}_{W_{\pm}}}\lec \de ^{\frac{1}{2}-b}\norm{G}{X^{s,-b,1}_{W_{\pm}}}}
for $0<b\le \frac{1}{2}$ by the same proof.
\end{rem}

\begin{proof}
We shall consider only the estimates for the Schr\"odinger case and write $X^{s,b,1}$ to denote $X^{s,b,1}_S$.
Once we note that $\LR{\tau \pm |k|}\sim \LR{\tau \pm \LR{k}}$, proof for the wave case will be identical.
Although most of these estimates were proved in \cite{BHHT}, we give a complete proof.

Before verifying the claim, we observe that
\eq{est_psi_besov}{\norm{\psi _\de}{B^b_{2,1}}\lec \de ^{\frac{1}{2}-b}\norm{\psi}{B^b_{2,1}}\lec \de ^{\frac{1}{2}-b}}
for $0<\de \le 1$ and $b>0$.
In fact, we have
\eqq{\norm{\psi _\de}{B^b_{2,1}}&\sim \sum _{L\ge 1}L^b\norm{\eta _L\de \hhat{\psi}(\de \cdot )}{L^2(\R )}\\
&\sim \de ^{\frac{1}{2}}\sum _{1\le L\le \de ^{-1}}L^b\norm{\eta _L(\de ^{-1}\cdot )\hhat{\psi}}{L^2}+\de ^{\frac{1}{2}}\sum _{L>\de ^{-1}}\de ^{-b}(\de L)^b\norm{\eta _{\de L}\hhat{\psi}}{L^2}\\
&\lec \de ^{\frac{1}{2}-b}(\norm{\hhat{\psi}}{L^2(|\tau |\lec 1)}+\norm{\psi}{B^b_{2,1}}).}

By the definition of the $X^{s,b,1}$ norm, we see that
\eqq{\norm{\psi _\de e^{it\Delta}u_0}{X^{s,\frac{1}{2},1}}=\norm{\psi _\de}{B^{\frac{1}{2}}_{2,1}}\norm{u_0}{H^s}\lec \norm{u_0}{H^s},}
which shows \eqref{est_lin_sol}.

We now estimate
\eqq{\norm{\psi _\de u}{X^{s,b,1}}=\Big[ \sum _{N\ge 1}N^{2s}\Big( \sum _{L\ge 1}L^b\norm{\eta _N(k)\eta _L(\tau +|k|^2)\int _\R \hhat{\psi _\de}(\tau -\tau ')\ti{u}(\tau ',k)\,d\tau '}{L^2_\tau \ell ^2_k}\Big) ^2\Big] ^{\frac{1}{2}}.}
Combining the Young and the H\"older inequalities, we obtain, for fixed $k$ and any dyadic $L,L_1,L_2\ge 1$, that
\eqq{&\norm{\eta _L(\tau +|k|^2)\int _\R (\eta _{L_1}\hhat{\psi _\de})(\tau -\tau ')\eta _{L_2}(\tau '+|k|^2)\ti{u}(\tau ',k)\,d\tau '}{L^2_\tau}\\
&\lec \min \shugo{L_1,L_2}^b\norm{\eta _{L_1}\hhat{\psi _\de}}{L^2}\norm{\eta _{L_2}(\cdot +|k|^2)\ti{u}(\cdot ,k)}{L^{\frac{1}{1-b}}}\\
&\lec \min \shugo{L_1,L_2}^bL_2^{\frac{1}{2}-b}\norm{\eta _{L_1}\hhat{\psi _\de}}{L^2}\norm{\eta _{L_2}(\cdot +|k|^2)\ti{u}(\cdot ,k)}{L^2}}
for $0\le b\le \frac{1}{2}$.
Since the above norm vanishes unless $L\lec \max \shugo{L_1,L_2}$, we have
\eqq{&\sum _{L,L_1,L_2\ge 1}L^b\norm{\eta _N(k)\eta _L(\tau +|k|^2)\int _\R (\eta _{L_1}\hhat{\psi _\de})(\tau -\tau ')\eta _{L_2}(\tau '+|k|^2)\ti{u}(\tau ',k)\,d\tau '}{L^2_\tau \ell ^2_k}\\
&\lec \sum _{L_1,L_2\ge 1}L_1^bL_2^{\frac{1}{2}}\norm{\eta _N(k)\tnorm{\eta _{L_1}\hhat{\psi _\de}}{L^2}\tnorm{\eta _{L_2}(\cdot +|k|^2)\ti{u}(\cdot ,k)}{L^2_\tau }}{\ell ^2_k}}
for $0<b\le \frac{1}{2}$.
Then, \eqref{est_stability} follows from \eqref{est_psi_besov}.

We next treat
\begin{align}
\psi _\de \Sc{I}_SF&=\psi _\de (t)\F _k^{-1}e^{-it|k|^2}\int _\R \ti{F}(\tau ,k)\int _0^te^{it'(\tau +|k|^2)}\,dt'\,d\tau \notag \\
&=\psi _\de (t)\F _k^{-1}e^{-it|k|^2}\int _\R \ti{F}(\tau ,k)\psi _{\de ^{-1}}(\tau +|k|^2)\sum _{n=1}^\I \frac{t^n}{n!}\big[ i(\tau +|k|^2)\big] ^{n-1}\,d\tau \label{term_duhamel1}\\
&\hx -\psi _\de (t)\F _k^{-1}e^{-it|k|^2}\int _\R \ti{F}(\tau ,k)\frac{1-\psi _{\de ^{-1}}(\tau +|k|^2)}{i(\tau +|k|^2)}d\tau \label{term_duhamel2}\\
&\hx +\psi _\de (t)\F _k^{-1}\int _\R e^{it\tau }\ti{F}(\tau ,k)\frac{1-\psi _{\de ^{-1}}(\tau +|k|^2)}{i(\tau +|k|^2)}d\tau .\label{term_duhamel3}
\end{align}
From \eqref{est_psi_besov} we see that
\eqq{\norm{t^n\psi _\de}{B^{\frac{1}{2}}_{2,1}}=\de ^n\norm{(t^n\psi )_\de}{B^{\frac{1}{2}}_{2,1}}\lec \de ^n\big( \norm{t^n\psi}{L^2}+\norm{\p _t(t^n\psi )}{L^2}\big) \lec (C\de )^n,}
and thus
\eqq{&\norm{\eqref{term_duhamel1}}{X^{s,\frac{1}{2},1}}\lec \sum _{n=1}^\I \frac{(C\de )^n}{n!}\norm{\LR{k}^s\int _\R |\ti{F}(\tau ,k)|\frac{\de ^{-(n-\frac{1}{2}+b)}}{\LR{\tau +|k|^2}^{\frac{1}{2}+b}}\,d\tau }{\ell ^2_k}}
for $b\ge -\frac{1}{2}$.
Decomposing dyadically and applying the Cauchy-Schwarz inequality in $\tau$, we evaluate the above by $\de ^{\frac{1}{2}-b}\tnorm{F}{X^{s,-b,1}}$.
Similarly, we use \eqref{est_lin_sol} (\mbox{resp.} \eqref{est_stability}) to estimate the $X^{s,\frac{1}{2},1}$ norm of \eqref{term_duhamel2} (\mbox{resp.} \eqref{term_duhamel3}) by $\de ^{\frac{1}{2}-b}\tnorm{F}{X^{s,-b,1}}$, for $b\le \frac{1}{2}$.
\end{proof}

Finally, we combine all the trilinear estimates proved in the preceding section and some of the above linear estimates to establish the crucial bilinear estimates.
\begin{prop}[Bilinear estimates]\label{prop_be}
Let $d\ge 2$ and $(s,l)$ satisfy \eqref{hani_3d} or \eqref{hani_2d}.
Then, we have
\begin{align}
\norm{\Sc{I}_S(uw)}{X^{s,\frac{1}{2},1}_S(\de )}+\norm{\Sc{I}_S(u\bar{w})}{X^{s,\frac{1}{2},1}_S(\de )}&\lec \de ^{\frac{1}{2}-}\norm{u}{X^{s,\frac{1}{2},1}_S(\de )}\norm{w}{X^{l,\frac{1}{2},1}_{W_{\pm}}(\de )},\label{est_be_s}\\
\norm{\Sc{I}_{W_{\pm}}(\tfrac{\Delta _\be}{\LR{\nabla}}(u\bar{v}))}{X^{l,\frac{1}{2},1}_{W_{\pm}}(\de )}&\lec \de ^{\frac{1}{2}-}\norm{u}{X^{s,\frac{1}{2},1}_S(\de )}\norm{v}{X^{s,\frac{1}{2},1}_S(\de )}\label{est_be_w}
\end{align}
for $0<\de \le 1$.
\end{prop}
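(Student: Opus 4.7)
The plan is to follow the $\ell ^1$-Besov Bourgain approach to bilinear estimates. First, I would apply the linear Duhamel estimate \eqref{est_lin_duh} (together with Remark~\ref{rem_wavemoissho} for the operator $\Delta _\be /\LR{\nabla}$, noting that the zero-order wave nonlinearity $\LR{\nabla}^{-1}(w+\bar w)/2$ is trivially bounded using \eqref{est_stability}) with $b=\tfrac{1}{2}-\e$ for a small $\e >0$.  This extracts the factor $\de ^{\frac{1}{2}-}$ and reduces \eqref{est_be_s}, \eqref{est_be_w} to the scale-free nonlinear estimates
\eqq{
\norm{uw}{X^{s,-\frac{1}{2}+\e ,1}_S}+\norm{u\bar{w}}{X^{s,-\frac{1}{2}+\e ,1}_S}&\lec \norm{u}{X^{s,\frac{1}{2},1}_S}\norm{w}{X^{l,\frac{1}{2},1}_{W_{\pm}}},\\
\norm{\tfrac{\Delta _\be}{\LR{\nabla}}(u\bar v)}{X^{l,-\frac{1}{2}+\e ,1}_{W_{\pm}}}&\lec \norm{u}{X^{s,\frac{1}{2},1}_S}\norm{v}{X^{s,\frac{1}{2},1}_S}.
}

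Next, I would decompose each factor into the dyadic blocks $P^S_{N,L}u$ (or $P^{W_{\pm}}_{N,L}w$) and, by duality against an $L^2$-function localized in $\FR{P}_{N_0}\cap \FR{S}_{L_0}$ (respectively $\FR{P}_{N_0}\cap \FR{W}^\pm _{L_0}$), reduce the dyadic piece to a trilinear integral of the form appearing in Section~\ref{sec_trilinear}.  The frequency constraint $k_0=k_1-k_2$ forces $N_0\lec \overline{N}_{12}$, so the dyadic sum splits into the high-low regime $\overline{N}_{12}\gg \underline{N}_{12}$, the very-low-wave regime $N_0\lec 1$, and the high-high regime $1\ll N_0\lec N_1\sim N_2$.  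In each regime I would choose the appropriate trilinear estimate: Proposition~\ref{prop_te_highlow} when $L_{\max}\gec N_{\max}^2$ and Corollary~\ref{cor_te_highlow} otherwise for the high-low regime; Corollary~\ref{cor_te_low} for $N_0\lec 1$; Proposition~\ref{prop_te_highhigh-m} for middle-modulation high-high interactions; and Proposition~\ref{prop_te_highhigh-l} (respectively Proposition~\ref{prop_te_highhigh-l_2d} when $d=2$) for the low-modulation high-high case.  A common feature of all these trilinear bounds is that their $L$-weight has the form $L_{\max}^{1/2}$ times powers of $L_{\med}$ and $L_{\min}$ whose exponents are strictly less than $\tfrac{1}{2}$, so against the factors $L_0^{-1/2+\e}L_1^{-1/2}L_2^{-1/2}$ inherited from the three Bourgain norms there remains a genuine gain of the form $L^{0-}$, and the $\ell ^1$-sum over modulations converges via the Cauchy-Schwarz inequality.

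After performing the $L$-summation, the remaining task is to sum in the frequency parameters.  The restriction $s-l\le 1$ in \eqref{hani_3d}, \eqref{hani_2d} enters through the $\overline{N}_{12}^{-1}$ factor of Corollary~\ref{cor_te_highlow} matched against the weight $N_0^sN_1^{-s-l}$ in the high-low regime of \eqref{est_be_s}, while the condition $2s\ge l+\tfrac{d}{2}$ is sharp in the high-low regime of the wave estimate \eqref{est_be_w}, where the weight structure is $N_0^{l+1}N_1^{-2s}$ and Corollary~\ref{cor_te_highlow} produces $\underline{N}_{12}^{d/2-}\overline{N}_{12}^{-1}$.  In the high-high regime, Proposition~\ref{prop_te_highhigh-l} contributes an $N_0^{(d-2)/2}$ factor with no decay gain, and a Schur-type analysis reduces the closure of the frequency sum essentially to the uniform (in $N_1$) finiteness of $\sum _{N_0\lec N_1}N_0^{d-2-2l}$, which demands $l>\tfrac{d-2}{2}$, as required in \eqref{hani_3d}.

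The hard part will be the borderline case.  In $d=2$ the border $l=0$ is reached thanks to Proposition~\ref{prop_te_highhigh-l_2d}, whose bound carries no $N_0$ prefactor at all once summation over the dyadic scale of $|k_0|$ is carried out; this sidesteps the logarithmic divergence $\sum _{N_0\lec N_1}1\sim \log N_1$ that would otherwise appear if Proposition~\ref{prop_te_highhigh-l} were applied naively at the endpoint.  For $d\ge 3$, as the remark following Proposition~\ref{prop_te_highhigh-l} points out, case~(II) of that proposition cannot be improved to $N_0^{(d-2)/2-}$ or $N_0^{(d-2)/2}(N_0/N_1)^{0+}$, so the strict inequality $l>\tfrac{d-2}{2}$ in \eqref{hani_3d} cannot be relaxed by this method; this is precisely the gap that remains between Theorem~\ref{thm_main} and Theorem~\ref{thm_illp} for $d\ge 3$.
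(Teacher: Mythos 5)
Your high-level roadmap matches the paper's: dyadic decomposition, duality, the same collection of trilinear estimates in each frequency/modulation regime, and closure via Cauchy--Schwarz and the $\ell^1$-Besov structure. However, the modulation summation is where your argument has a genuine gap. Applying \eqref{est_lin_duh} with the uniform choice $b=\tfrac12-\e$ produces an output modulation weight $L_{\mathrm{out}}^{-1/2+\e}$, and against the trilinear prefactor $L_{\max}^{1/2}L_{\med}^{a}L_{\min}^{a'}$ (with $a,a'<\tfrac12$) you claim a net $L^{0-}$ gain in every modulation variable. This is false exactly when the output modulation is the largest of the three: then $L_{\mathrm{out}}^{-1/2+\e}\cdot L_{\max}^{1/2}=L_{\mathrm{out}}^{\e}$ grows, and the $\ell^1$-sum over $L_{\mathrm{out}}$ diverges. (Also note that $b=\tfrac12-\e$ in \eqref{est_lin_duh} yields only $\de^{\frac12-b}=\de^{\e}$, not $\de^{\frac12-}$; the remaining $\de$-power must be supplied by \eqref{est_stability}, which your sketch never invokes.)

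The paper closes this case by exploiting the modulation algebra: from $(\tau_0\pm|k_0|)-(\tau_1+|k_1|^2)+(\tau_2+|k_2|^2)=\pm|k_0|-|k_1|^2+|k_2|^2$ one gets $L_{\max}\lec\max\{L_{\med},N_{\max}^2\}$. In the regime where the output modulation dominates and the input modulations are small (the paper's cases (1b), (2b), (3b) for \eqref{est_be_s}, and the analogous situations for \eqref{est_be_w}), this forces $L_{\mathrm{out}}\sim N_{\max}^2$, so the $L_{\mathrm{out}}$-sum collapses to $O(1)$ dyadic terms; the paper then uses \eqref{est_lin_duh} with $b=\tfrac12$ there (gaining no $\de$-power from that step) and recovers $\de^{\frac12-}$ afterwards by applying \eqref{est_stability} to $\psi_\de u$ and $\psi_\de w$ with exponents strictly below $\tfrac12$. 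Without this modulation constraint and the accompanying case-dependent choice of $b$, the $L$-summation in your argument does not close.
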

\begin{rem}\label{rem_wavemoissho2}
In view of Remark~\ref{rem_wavemoissho}, we also have the estimate \eqref{est_be_w} with the reduced-wave Duhamel operator $\mathcal{I}_{W_{\pm}}G$ replaced by $\int _0^te^{\mp i(t-t')|\nabla |}G(t')\,dt'$.
\end{rem}

\begin{proof}
For \eqref{est_be_s}, we note the relation $\tnorm{\bar{w}}{X^{s,b,1}_{W_{\pm}}}=\tnorm{w}{X^{s,b,1}_{W_{\mp}}}$ to restrict our attention to the estimate of the first term.

We make the Littlewood-Paley decompositions,
\eqq{&\norm{\Sc{I}_S(uw)}{X^{s,\frac{1}{2},1}_S(\de )}\le \norm{\psi _\de \Sc{I}_S(\psi _\de u\cdot \psi_\de w)}{X^{s,\frac{1}{2},1}_S}\\
&\sim \Big[ \sum _{N_1\ge 1}\norm{P_{N_1}\psi _\de \Sc{I}_S(\psi _\de u\cdot \psi_\de w)}{X^{s,\frac{1}{2},1}_S}^2\Big] ^{\frac{1}{2}}\le \Big[ \sum _{N_1\ge 1}\Big( \sum _{N_0,N_2\ge 1}\sum _{L_0,L_1,L_2\ge 1}\Sc{N}^S\Big) ^2\Big] ^{\frac{1}{2}},\\
&\Sc{N}^S=\Sc{N}^S_{N_0,N_1,N_2,L_0,L_1,L_2}:=\norm{\psi _\de \Sc{I}_S(P^S_{N_1,L_1}[P^S_{N_2,L_2}(\psi _\de u)\cdot P^{W_{\pm}}_{N_0,L_0}(\psi_\de w)])}{X^{s,\frac{1}{2},1}_S},}
then it suffices to evaluate the above by $\de ^{\frac{1}{2}-}\tnorm{u}{X^{s,\frac{1}{2},1}_S}\tnorm{w}{X^{l,\frac{1}{2},1}_{W_{\pm}}}$.
We will apply Lemma~\ref{lem_linear} and the trilinear estimates in the preceding sections, considering the following nine disjoint cases:
\eqq{(0)\quad&N_0\lec 1,\\
(1a)\quad&N_0\gg 1,\quad N_1\gg N_2,\quad \overline{L}_{02}\gec N_0^2,\\
(1b)\quad&N_0\gg 1,\quad N_1\gg N_2,\quad \overline{L}_{02}\ll N_0^2,\\
(2a)\quad&N_0\gg 1,\quad N_2\gg N_1,\quad \overline{L}_{02}\gec N_0^2,\\
(2b)\quad&N_0\gg 1,\quad N_2\gg N_1,\quad \overline{L}_{02}\ll N_0^2,\\
(3a)\quad&N_0\gg 1,\quad N_1\sim N_2,\quad \overline{L}_{02}\gec N_1^2,\\
(3b)\quad&N_0\gg 1,\quad N_1\sim N_2,\quad \overline{L}_{02}\ll N_1^2,\quad L_1\gec N_1^2,\\
(4a)\quad&N_0\gg 1,\quad N_1\sim N_2,\quad \overline{L}_{02}\ll N_1^2,\quad \underline{L}_{02}\lec L_1\ll N_1^2,\\
(4b)\quad&N_0\gg 1,\quad N_1\sim N_2,\quad \overline{L}_{02}\ll N_1^2,\quad L_1\ll \underline{L}_{02}.}

For the case (0), it holds that $N_1\sim N_2$.
We apply \eqref{est_lin_duh} with $b=\frac{1}{6}+$, then Corollary~\ref{cor_te_low}, to have
\eqq{\sum _{L_0,L_1,L_2}\Sc{N}^S&\lec \sum _{L_0,L_1,L_2}N_1^{s}\de ^{\frac{1}{3}-}L_1^{-\frac{1}{6}-}(L_0L_1L_2)^{\frac{1}{6}}\norm{P_{N_2,L_2}(\psi _\de u)}{L^2}\norm{P_{N_0,L_0}(\psi_\de w)}{L^2}\\
&\lec \de ^{\frac{1}{3}-}N_0^{-l}\norm{P_{N_2}(\psi _\de u)}{X^{s,\frac{1}{6},1}_S}\norm{P_{N_0}(\psi _\de w)}{X^{l,\frac{1}{6},1}_{W_{\pm}}}.}
Summing up over $N_j$ and using \eqref{est_stability} twice, we obtain the estimate for (0) with $\de ^{1-}$.

The cases (1a) -- (2b) stand for the high-low interactions, so we use Corollary~\ref{cor_te_highlow}.
In these cases we have $N_0\sim \overline{N}_{12}$ and $L_{\max} \sim \max \shugo{L_{\med},N_0^2}$.
Therefore, it holds that $L_1\lec L_{\med}$ for (1a) and (2a), and that $L_1=L_{\max}\sim N_0^2$ for (1b) and (2b).

For (1a), assume $L_2\ge L_0$ (the other case is parallel).
By \eqref{est_lin_duh} with $b=\frac{1}{4}+$, 
\eqq{\sum _{L_0,L_1,L_2}\Sc{N}^S&\lec \sum _{L_0,L_1,L_2}N_1^{s}\de ^{\frac{1}{4}-}L_1^{-\frac{1}{4}-}L_2^{\frac{1}{2}}(L_0L_1)^{\frac{1}{4}+}N_2^{\frac{d}{2}-}N_1^{-1}\norm{P_{N_2,L_2}(\psi _\de u)}{L^2}\norm{P_{N_0,L_0}(\psi _\de w)}{L^2}\\
&\lec \de ^{\frac{1}{4}-}N_1^{s-l-1}N_2^{\frac{d}{2}-s-}\norm{P_{N_2}(\psi _\de u)}{X^{s,\frac{1}{2},1}_S}\norm{P_{N_0}(\psi _\de w)}{X^{l,\frac{1}{4}+,1}_{W_{\pm}}}.}
If $(s,l)$ is in the range \eqref{hani_3d} or \eqref{hani_2d}, we have $s-l-1\le 0$ and $(s-l-1)+(\frac{d}{2}-s-)<0$.
Therefore, the above is bounded by
\eqq{\de ^{\frac{1}{4}-}N_2^{0-}\norm{P_{N_2}(\psi _\de u)}{X^{s,\frac{1}{2},1}_S}\norm{P_{N_0}(\psi _\de w)}{X^{l,\frac{1}{4}+,1}_{W_{\pm}}},}
which is summable over $N_2$ when we apply the Cauchy-Schwarz in $N_2$ to create $\tnorm{\psi _\de u}{}$.
At the end we use \eqref{est_stability} with $b=\frac{1}{4}+$ and obtain the claim.

For (1b), we first apply \eqref{est_lin_duh} with $b=\frac{1}{2}$.
The summation over $L_1$ will have no negative power of $L_1$; nevertheless, we can treat it similarly to (1a) because of the fact $L_1\sim N_0^2$.
We apply \eqref{est_stability} with $b=\frac{1}{4}+$ twice to conclude the desired estimate.

The cases (2a) and (2b) are also similar to (1a) and (1b), respectively.

Next, we treat (3a) and (3b), namely, the high-modulation high-high interactions, which is estimated with Proposition~\ref{prop_te_highlow}.
We have $N_0\lec N_1$, and again $L_1\lec L_{\med}$ for (3a), $L_1=L_{\max}\sim N_1^2$ for (3b).
The estimate for (3a) in the case $L_2\ge L_0$ is as follows:
\eqq{\sum _{L_0,L_1,L_2}\Sc{N}^S&\lec \sum _{L_0,L_1,L_2}N_1^{s}\de ^{\frac{1}{4}-}L_1^{-\frac{1}{4}-}L_2^{\frac{1}{2}}(L_0L_1)^{\frac{1}{4}}N_0^{\frac{d}{2}}N_1^{-1}\norm{P_{N_2,L_2}(\psi _\de u)}{L^2}\norm{P_{N_0,L_0}(\psi _\de w)}{L^2}\\
&\lec \de ^{\frac{1}{4}-}N_0^{\frac{d}{2}-l}N_1^{-1}\norm{P_{N_2}(\psi _\de u)}{X^{s,\frac{1}{2},1}_S}\norm{P_{N_0}(\psi _\de w)}{X^{l,\frac{1}{4},1}_{W_{\pm}}}.}
Under the assumption $\frac{d}{2}-l\le 1$, we have a prefactor $\frac{N_0}{N_1}$ which enable us to apply the Cauchy-Schwarz in $N_0$.
We conclude the estimate by applying \eqref{est_stability}.
(3b) is dealt with in the same manner, so we omit the details.

(4a) and (4b) correspond to the middle- and low-modulation interactions, and we need to consider $d\ge 3$ and $d=2$ separately.
We see that $L_1\gec L_{\med}$ for (4a), $L_1=L_{\min}$ for (4b).
As an example, we only consider the case (4b).

When $d\ge 3$, we use Proposition~\ref{prop_te_highhigh-m} and \ref{prop_te_highhigh-l}, after \eqref{est_lin_duh} with $b=\frac{1}{4}+$, to obtain
\eqq{\sum _{L_0,L_1,L_2}\Sc{N}^S\lec \de ^{\frac{1}{4}-}N_0^{\frac{d-2}{2}-l}\norm{P_{N_2}(\psi _\de u)}{X^{s,\frac{3}{8}+,1}_S}\norm{P_{N_0}(\psi _\de w)}{X^{l,\frac{3}{8}+,1}_{W_{\pm}}}.}
In order to apply the Cauchy-Schwarz in $N_0$, we have to assume $l> \frac{d}{2}-1$ with no equality.
The rest of estimate is similar to the preceding cases.

For $d=2$, we have established a trilinear estimate for the low-modulation interactions, Proposition~\ref{prop_te_highhigh-l_2d}, without division in $N_0$.
Note that the same is true for the middle-modulation interactions, since the estimate in Proposition~\ref{prop_te_highhigh-m} has the prefactor $(\frac{N_0}{N_1})^{0+}$.
Thus, instead of $\sum \Sc{N}^S$, we consider the estimate of
\eqq{\Big[ \sum _{N_1\gg 1}\Big( \sum _{N_2\sim N_1}\sum _{L_1,L_2,L_3\ll N_1^2}\norm{\psi _\de \Sc{I}_S(P_{N_1,L_1}[P_{N_2,L_2}(\psi _\de u)\cdot P_{1\ll \cdot \lec N_1,L_0}(\psi_\de w)])}{X^{s,\frac{1}{2},1}_S}\Big) ^2\Big] ^{\frac{1}{2}},}
for $d=2$.
Following the argument for $d\ge 3$, we obtain a bound
\eqq{\de ^{\frac{1}{4}-}\norm{\psi _\de u}{X^{s,\frac{3}{8}+,1}_S}\sum _{L_0\ge 1}L_0^{\frac{3}{8}+}\norm{\eta _{L_0}(\tau \pm |k|)\ti{\psi _\de w}}{L^2_\tau \ell ^2_k},}
which is sufficient whenever $l\ge 0$ if we are willing to pay a little $L_0$.

Let us next treat \eqref{est_be_w} with the same idea.
We may restrict ourselves to the case of the $+$ sign by symmetry.
We begin with the Littlewood-Paley decomposition
\eqs{\norm{\Sc{I}_{W_+}(\tfrac{\Delta _\be}{\LR{\nabla}}(u\bar{v}))}{X^{l,\frac{1}{2},1}_{W_+}(\de )}\lec \Big[ \sum _{N_0\ge 1}\Big( \sum _{N_1,N_2\ge 1}\sum _{L_0,L_1,L_2\ge 1}\Sc{N}^W\Big) ^2\Big] ^{\frac{1}{2}},\\
\Sc{N}^W=\Sc{N}^W_{N_0,N_1,N_2,L_0,L_1,L_2}:=\norm{\psi _\de \Sc{I}_{W_+}(P_{N_0,L_0}[P_{N_1,L_1}(\psi _\de u)\cdot \bbar{P_{N_2,L_2}(\psi _\de v)}])}{X^{l+1,\frac{1}{2},1}_{W_+}}.}
We will omit the detailed argument and only see how the restriction for $(s,l)$ is deduced.

The case $N_0\lec 1$ is easily estimated whenever $s\ge 0$.
For the high-low interactions, we consider, for instance, $N_0\sim N_1\gg N_2$ and $L_0=L_{\max}\sim N_0^2\gg \overline{L}_{12}$.
Imitating the above argument (case (1b) for \eqref{est_be_s}), we see that
\eqq{\sum _{L_0,L_1,L_2}\Sc{N}^W\lec N_0^{l+1}N_2^{\frac{d}{2}-}N_1^{-1}\cdot N_1^{-s}N_2^{-s}\norm{P_{N_1}(\psi _\de u)}{X^{s,\frac{1}{4}+,1}_S}\norm{P_{N_2}(\psi _\de v)}{X^{s,\frac{1}{4}+,1}_S}.}
This is appropriately estimated under the assumption $l-s\le 0$, $-2s+l+\frac{d}{2}\le 0$.
For the high-modulation high-high interactions, considering the case $1\ll N_0\lec N_1\sim N_2$ and $L_1=L_{\max}\gec N_1^2$ for example, we obtain
\eqq{\sum _{L_0,L_1,L_2}\Sc{N}^W\lec \de ^{\frac{1}{4}-}N_0^{l+1}N_0^{\frac{d}{2}}N_1^{-1}\cdot N_1^{-s}N_2^{-s}\norm{P_{N_1}(\psi _\de u)}{X^{s,\frac{1}{2},1}_S}\norm{P_{N_2}(\psi _\de v)}{X^{s,\frac{1}{4},1}_S}.}
Since $2s+1>0$ and $l+\frac{d}{2}+1\le 2s+1$ under the assumption $2s\ge l+\frac{d}{2}\ge d-1$, this is summable over $N_0$.
Finally, we consider particularly $1\ll N_0\lec N_1\sim N_2$, $L_{\max}\ll N_1^2$, and $L_2=L_{\min}$, as an example of the high-high interactions with middle or low modulation.
We obtain
\eqq{\sum _{L_0,L_1,L_2}\Sc{N}^W\lec \de ^{\frac{1}{8}-}N_0^{l+1}N_0^{\frac{d-2}{2}}\cdot N_1^{-s}N_2^{-s}\norm{P_{N_1}(\psi _\de u)}{X^{s,\frac{3}{8}+,1}_S}\norm{P_{N_2}(\psi _\de v)}{X^{s,\frac{1}{4},1}_S},}
where we still have enough negative power of $N_1$, since $2s>0$ holds under our assumption.
Therefore, in contrast to the Schr\"odinger estimate \eqref{est_be_s}, the wave bilinear estimate \eqref{est_be_w} admits the border case $2s=l+\frac{d}{2}$ even for $d\ge 3$.
In fact, for $d=2$ we do not have to care about the decomposition with respect to $N_0$.
\end{proof}

\begin{proof}[Proof of Theorem~\ref{thm_main3}]
We write \eqref{IE} as $(u,w)(t)=\Phi _{(u_0,w_0)}(u,w)(t)$.
For the term $\LR{\nabla}^{-1}(w+\bar{w})$, we use \eqref{est_lin_duh} and \eqref{est_stability} to verify
\eqq{&\norm{\int _0^te^{-i(t-t')\LR{\nabla}}\LR{\nabla}^{-1}w(t')\,dt'}{X^{l,\frac{1}{2},1}_{W_+}(\de )}\le \norm{\psi _\de \int _0^te^{-i(t-t')\LR{\nabla}}\LR{\nabla}^{-1}(\psi _\de w)(t')\,dt'}{X^{l,\frac{1}{2},1}_{W_+}}\\
&\lec \de ^{\frac{1}{2}-}\norm{w}{X^{l-1,0,1}_{W_+}}\lec \de ^{1-}\norm{w}{X^{l,\frac{1}{2},1}_{W_+}}.}
Taking infimun over $w$, we have
\eqq{\norm{\int _0^te^{-i(t-t')\LR{\nabla}}\LR{\nabla}^{-1}w(t')\,dt'}{X^{l,\frac{1}{2},1}_{W_+}(\de )}\lec \de ^{1-}\norm{w}{X^{l,0,1}_{W_+}(\de )}.}
We also have a similar estimate for $\bar{w}$, since
\eqq{\norm{\bar{w}}{X^{l,0-,1}_{W_+}}\lec \norm{\bar{w}}{X^{l,0,2}_{W_+}}=\norm{w}{X^{l,0,2}_{W_+}}\le \norm{w}{X^{l,0,1}_{W_+}}.}

Let $(s,l)$ be such that \eqref{hani_3d} or \eqref{hani_2d} is true, and $r>0$ be any radius.
For any $(u_0,w_0)\in H^s\times H^l$ satisfying $\tnorm{(u_0,w_0)}{H^s\times H^l}\le r$, we see from \eqref{est_lin_sol}, \eqref{est_be_s}, and \eqref{est_be_w} that
\eqq{&\norm{\Phi _{(u_0,w_0)}(u,w)}{X^{s,\frac{1}{2},1}_S(\de )\times X^{l,\frac{1}{2},1}_{W_+}(\de )}\\
&\le Cr+C\de ^{\frac{1}{2}-}\big( \norm{(u,w)}{X^{s,\frac{1}{2},1}_S(\de )\times X^{l,\frac{1}{2},1}_{W_+}(\de )}+\norm{(u,w)}{X^{s,\frac{1}{2},1}_S(\de )\times X^{l,\frac{1}{2},1}_{W_+}(\de )}^2\big)}
for $0<\de \le 1$, which implies that $\Phi _{(u_0,w_0)}$ is a map on the ball of radius $2Cr$ in $X^{s,\frac{1}{2},1}_S(\de )\times X^{l,\frac{1}{2},1}_{W_+}(\de )$ centered at the origin, provided $\de ^{\frac{1}{2}-}r\ll 1$.
Similarly, we have
\eqq{&\norm{\Phi _{(u_0,w_0)}(u,w)-\Phi _{(u_0,w_0)}(u',w')}{X^{s,\frac{1}{2},1}_S(\de )\times X^{l,\frac{1}{2},1}_{W_+}(\de )}\\
&\le C\de ^{\frac{1}{2}-}\big( 1+\norm{(u,w)}{X^{s,\frac{1}{2},1}_S(\de )\times X^{l,\frac{1}{2},1}_{W_+}(\de )}+\norm{(u',w')}{X^{s,\frac{1}{2},1}_S(\de )\times X^{l,\frac{1}{2},1}_{W_+}(\de )}\big) \\
&\hspace{40pt}\times \norm{(u,w)-(u',w')}{X^{s,\frac{1}{2},1}_S(\de )\times X^{l,\frac{1}{2},1}_{W_+}(\de )},}
which shows that $\Phi _{(u_0,w_0)}$ is contractive on this ball provided $\de ^{\frac{1}{2}-}r\ll 1$, giving a solution to \eqref{IE}.
The uniqueness of solution in the whole function space $X^{s,\frac{1}{2},1}_S(\de )\times X^{l,\frac{1}{2},1}_{W_+}(\de )$ and the Lipschitz continuity of the data-to-solution map then follow from a standard argument.
\end{proof}


\section{Proof of Theorem~\ref{thm_illp}}\label{sec_pf-illp}

In this section we shall verify Theorem~\ref{thm_illp}.
More precisely, we will show the following.

\begin{thm}\label{thm_illp2}
Let $\la ,c_0,\al ,\be ,\ga$ be any constants as \eqref{constants}.
The following holds.

(i) [Norm inflation] Assume that $d=2$, $s<\frac{3}{2}$, $l>\max \shugo{0,\,2s-1}$.
Then, there exists a sequence $\shugo{u_{0,N}}$ of smooth functions on $\Tg^2$ satisfying $\tnorm{u_{0,N}}{H^s}\to 0$ as $N\to \I$, such that the solution $(u_N,n_N)$ to \eqref{Z} with initial data $(u_{0,N},0,0)$ satisfies $\tnorm{n_N(t)}{H^l}\to \I$ as $N\to \I$ for any $t\in \R$, $0<|t|\ll 1$.

(ii) [Non-existence of continuous map] Assume that $d=2$, $s<\frac{1}{2}$, $l=0$.
Then, there exists a sequence $\shugo{u_{0,N}}$ of smooth functions on $\Tg^2$ satisfying $\tnorm{u_{0,N}}{H^s}\to 0$ as $N\to \I$, such that the solution $(u_N,n_N)$ to \eqref{Z} with initial data $(u_{0,N},0,0)$ satisfies $\tnorm{n_N(t)}{H^l}\sim 1$ for any $t\in \R$, $0<|t|\ll 1$ and any sufficiently large $N$.

(iii) [Non-existence of $C^2$ map] Let $d\ge 2$.
Assume that either $l<\max \shugo{0,\,s-2}$ or $l>\min \shugo{2s-1,\,s+1}$ holds, and that the data-to-solution map $(u_0,n_0,n_1)\mapsto (u,n)$ of \eqref{Z} for smooth data extends to a continuous map
\eqq{&\Shugo{(u_0,n_0,n_1)\in H^{s,l}}{\tnorm{(u_0,n_0,n_1)}{H^{s,l}}\le R}\quad \to \quad \mathcal{C}([-T,T];H^{s,l})}
for some $R,T>0$.
Then, this map will not be $C^2$ in these topologies at the origin.

(iv) [Lack of bilinear estimates in the Bourgain spaces] Let $d\ge 2$.
Then, the bilinear estimates 
\begin{gather}
\norm{uw}{X^{s,b-1,p}_S}\lec \norm{u}{X^{s,b,p}_S}\norm{w}{X^{l,b,p}_{W_{\pm}}},\label{behanrei1}\\
\norm{\frac{\Delta _\be}{\LR{\nabla}}(u\bar{v})}{X^{l,b-1,p}_{W_{\pm}}}\lec \norm{u}{X^{s,b,p}_S}\norm{v}{X^{s,b,p}_S}\label{behanrei2}
\end{gather}
do not hold for any $b\in \R$, $1\le p<\I$ if $l<\max \shugo{0,\,s-1}$ and if $l>\min \shugo{2s-1,\,s}$, respectively.
\end{thm}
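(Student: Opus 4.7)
My plan is to use Fourier-analytic computations on the second Picard iterate and exploit the explicit resonant frequency pairs exhibited in Section~\ref{sec_intro}. Denote by $(u^{(2)}, n^{(2)})$ the bilinear-in-data terms in the Picard expansion of \eqref{Z} around zero. The Fourier coefficient of $n^{(2)}$ arising from data $(u_0,0,0)$ is computed in \eqref{second} of the excerpt, and a parallel computation for data $(u_0,n_0,0)$ yields
\[
\hhat{u^{(2)}}(t,k)=-\tfrac{i\la}{2}\,e^{-it|k|^2}\frac{1}{|\ga|}\sum_{k'\in\Zg^d}\hhat{n_0}(k')\hhat{u_0}(k-k')\sum_{\pm}\int_0^t e^{it'M'_\pm}\,dt',
\]
with $M'_\pm:=|k|^2\pm|k'|-|k-k'|^2$. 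The four thresholds in part (iii) correspond to four distinct frequency configurations: the resonant high-high interaction in $n^{(2)}$ (where $M=O(1)$ is realized by the pair constructed in the introduction); the non-resonant high-low in $n^{(2)}$ ($|k-k'|\sim 1$, $|k|\sim|k'|\sim N$, $M\sim N^2$); and the two analogous configurations in $u^{(2)}$.

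For (iii) and (iv), I would test these bilinear forms against initial data supported on sharp bumps at these special frequencies. A direct computation on Fourier side gives, for $|t|\sim 1$, the lower bounds $\tnorm{n^{(2)}}{H^l}\gec N^{l+1-2s}\tnorm{u_0}{H^s}^2$ (resonant high-high, failing under $l>2s-1$), $\tnorm{n^{(2)}}{H^l}\gec N^{l-s-1}\tnorm{u_0}{H^s}^2$ (non-resonant high-low, failing under $l>s+1$), and analogously $\tnorm{u^{(2)}}{H^s}\gec N^{-l}\tnorm{u_0}{H^s}\tnorm{n_0}{H^l}$ and $\tnorm{u^{(2)}}{H^s}\gec N^{s-2-l}\tnorm{u_0}{H^s}\tnorm{n_0}{H^l}$ (failing under $l<0$ and $l<s-2$ respectively). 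Since the relevant modulations are $O(1)$ on these examples, the Bourgain norms $X^{s,b,p}_S$ and $X^{l,b,p}_{W_\pm}$ are comparable to $H^s$ and $H^l$ on these sharply localized test functions up to constants, so the same examples also contradict \eqref{behanrei1} and \eqref{behanrei2}. The $C^2$ failure in (iii) follows from the standard fact that a $C^2$ data-to-solution map has $\p_{u_0,n_0}^2\mathrm{sol}(0)=(u^{(2)},n^{(2)})$, which would have to obey the bilinear bound we have just refuted.

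For (i), the strong norm-inflation, I would take $u_{0,N}=c_N(\fy_{k^{(1)}}+\fy_{-k^{(2)}})$ with $(k^{(1)},k^{(2)})$ the resonant pair from the introduction, $\fy_k$ a sharp unit-mass bump near $k$, and $c_N\sim N^{-s-\e}$ so that $\tnorm{u_{0,N}}{H^s}\to 0$ while $c_N^2 N^{l+1}\to\I$, which is possible precisely under $l>2s-1$. The main technical obstacle is to show that the true solution $n_N$ inherits the size of its second iterate. I would follow a Bejenaru--Tao / Iwabuchi--Ogawa type scheme, working in an auxiliary Banach space $Z$ in which the formal Picard expansion of $n_N$ converges absolutely on a short time interval, and showing inductively that the $j$th iterate has $Z$-norm bounded by $(Cc_N N^{\th})^{j-2}\tnorm{n^{(2)}_N}{Z}$ for a fixed $\th$; provided $c_N N^\th\ll 1$, the second iterate dominates and transfers to the $H^l$-norm of $n_N$. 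Part (ii) is the limiting construction at $l=0$, $s<\tfrac{1}{2}$ with $c_N\sim N^{-1/2}$ so that $c_N^2 N\sim 1$: the second iterate is both bounded and bounded below, yielding a sequence along which any continuous extension of the solution map would have to satisfy both $\tnorm{u_{0,N}}{H^s}\to 0$ and $\tnorm{n_N(t)}{H^0}\gec 1$, a contradiction. The principal obstacle in both (i) and (ii) is the verification of the higher-iterate bounds in $Z$ at exponents $(s,l)$ where Proposition~\ref{prop_be} fails, requiring an auxiliary norm that compensates through sharp frequency/time localization of the data.
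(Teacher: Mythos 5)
Your treatment of parts (iii) and (iv) matches the paper's approach in spirit: test the second Picard iterate on data concentrated at the resonant pair $K_N,\ti K_N$ from the introduction for the thresholds $l<0$ and $l>2s-1$, and at a high-low configuration for the other two thresholds. Two small inaccuracies worth noting: for the non-resonant (high-low) configurations the modulation at the output frequency is $\sim N^2$ (or $\sim N$), not $O(1)$, and correspondingly the paper evaluates $u^{(2)}$ and $n^{(2)}$ at times $t\sim N^{-2}$ and $t\sim N^{-1}$ rather than $|t|\sim 1$ — the supremum over $t\in[-T,T]$ in the $C^2$ claim is what makes this legal. Consequently the paper's explicit Bourgain-norm counterexamples in (iv) cover only the resonant thresholds $l<0$ and $l>2s-1$; the non-resonant thresholds $l<s-1$ and $l>s$ are deferred to Takaoka's 1d proof with trivial modification.

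For parts (i) and (ii), however, you propose a genuinely different and considerably harder route than the paper, and there is a gap you yourself flag: you want an auxiliary Banach space $Z$ in which the full Picard series for $n_N$ converges absolutely \emph{at the low regularity} $(s,l)$ where Proposition~\ref{prop_be} is known to fail, and you acknowledge that building such a $Z$ is ``the principal obstacle.'' The paper sidesteps this obstacle entirely via Holmer's scheme: choose $(s',l')$ with $s<s'$, $l\ge l'$ inside the local well-posedness range (so $0<2s'-1<l'$), take $u_{0,N}=N^{-s'}f_N$ so that $\tnorm{u_{0,N}}{H^{s'}}\sim 1$ while $\tnorm{u_{0,N}}{H^s}\sim N^{s-s'}\to 0$. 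Then the standard theory already gives a solution on $[-T,T]$ with $T$ independent of $N$, and the bilinear estimates \eqref{est_be_s}, \eqref{est_be_w} applied at \emph{several} regularity levels $(\sgm,0)$ (with $\sgm=s_+$ chosen so $l'=2s_+-1$, $\sgm=s'$, $\sgm=\tfrac12$) yield $\tnorm{u_N-e^{it\Delta}u_{0,N}}{X^{s_+,\frac12,1}_S(T)}\lec N^{s_++1-3s'}$, hence $\sup_{|t|\le T}\tnorm{n^{(2)}[u_{0,N}]-n_N}{H^{l'}}\lec N^{l'-4s'+2}$, which is dominated by $\tnorm{n^{(2)}[u_{0,N}](t)}{H^{l'}}\gec |t|N^{l'-2s'+1}$ because $2s'-1>0$. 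Transferring from $H^{l'}$ to $H^l$ (since $l\ge l'$) finishes (i); the borderline choice $s'=s_+=\tfrac12$, $l'=0$ gives (ii). Until you supply the space $Z$ and the inductive higher-iterate bounds, your version of (i) and (ii) is incomplete; the paper's perturbative detour through the known well-posedness range is the idea that makes those bounds unnecessary.
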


\begin{rem}
(i)--(ii) means the ill-posedness of the problem, since the data-to-solution map on smooth data cannot extend to a continuous map under these regularities.
The norm-inflation phenomena like (i) was observed for the Zakharov system on $\R$ by Holmer~\cite{Ho}, and we will take the same approach.
The ill-posedness assertion like (ii) was mentioned in Bejenaru and Tao's work~\cite{BT} in a general framework; see also \cite{KiT} for related results.

From (iii), we can say that the usual contraction argument in any space embedded continuously into $\mathcal{C}([-T,T];H^{s,l})$ does not work in these regularities.
Results of this type, which does not directly mean the ill-posedness of the problem, was first given by Bourgain~\cite{B97} in the context of the Korteweg-de Vries equation.

The bilinear estimates stated in (iv), which (with a suitable $b$ and $p$) yield the local well-posedness for small initial data, are easily deduced from trilinear estimates obtained in Section~\ref{sec_trilinear} provided $(s,l)$ is in the range \eqref{hani_3d} or \eqref{hani_2d}.
Note that in the 2d case the regularity range given in (iv) exactly complements the range \eqref{hani_2d}.
The claim (iv) still holds for the Bourgain spaces of $\ell ^\I$-Besov type $X^{s,b,\I}$ (defined in a natural way), which is trivial from the proof below.
The lack of these estimates still prevent us from the usual contraction argument in the Bourgain spaces.
However, in some regularity range it is strongly expected that a suitable modification of the Bourgain spaces will restore the bilinear estimates which will yield the local well-posedness of the problem.
\end{rem}

We start the proof of Theorem~\ref{thm_illp2} by (iii).
The initial value problem \eqref{Z} is replaced by the system of integral equations
\eq{IE:Z}{
u(t)&=e^{it\Delta}u_0-i\la \int _0^te^{i(t-t')\Delta}\big[ n(t')u(t')\big] \,dt',\\
n(t)&=\cos (t|\nabla |)n_0+\frac{\sin (t|\nabla |)}{|\nabla |}n_1-\int _0^t\frac{\sin ((t-t')|\nabla |)}{|\nabla |}\Delta _\be \big[ u(t')\bbar{u(t')}\big] \,dt',
}
after the normalization of constants such that $c_0=1$, $\al =(1,\dots ,1)$.
We focus on the quadratic terms in the iteration scheme,
\eqq{u^{(2)}[u_0,n_0,n_1](t)&=-i\la \int _0^te^{i(t-t')\Delta}\left[ \left( \cos (t'|\nabla |)n_0+\frac{\sin (t'|\nabla |)}{|\nabla |}n_1\right) e^{it'\Delta}u_0\right] \,dt',\\
n^{(2)}[u_0](t)&=-\int _0^t\frac{\sin ((t-t')|\nabla |)}{|\nabla |}\Delta _\be \big[ e^{it'\Delta}u_0\bbar{e^{it'\Delta}u_0}\big] \,dt'.}

Throughout this section we assume $\be _1\neq 0$ for the constant $\be =(\be _1,\dots ,\be _d)\in \R ^d\setminus \shugo{(0,\dots ,0)}$.
For $d\ge 2$ and $1\ll N\in \Bo{N}$, define $K_N,\ti{K}_N\in \Zg ^d$ as
\eqq{K_N=(\frac{N}{\ga _1},\frac{n-1}{\ga _2},0,\dots ,0),\quad  \ti{K}_N=(\frac{1-N}{\ga _1},\frac{n}{\ga _2},0,\dots ,0),}
where $n$ is the unique integer satisfying
\eq{assump:res}{|K_N-\ti{K}_N|+|K_N|^2-|\ti{K}_N|^2=\sqrt{\frac{(2N-1)^2}{\ga _1^2}+\frac{1}{\ga _2^2}}+\frac{2N-1}{\ga _1^2}-\frac{2n-1}{\ga _2^2} \in \left( -\frac{1}{\ga _2^2},\frac{1}{\ga _2^2}\right] .}
Note that $|K_N|\sim |\ti{K}_N|\sim |K_N-\ti{K}_N|\sim N$ and 
\eq{assump:nonres}{\Big| -|K_N-\ti{K}_N|+|K_N|^2-|\ti{K}_N|^2\Big| \sim N.}
We set
\eqs{f _N(x):=e^{iK_N\cdot x}+e^{i\ti{K}_N\cdot x},\qquad g _N(x):=\cos ((K_N-\ti{K}_N)\cdot x).}

Now, the claim (iii) will be verified from the following lemma.
We refer to \cite{Ho} for the detailed argument.
\begin{lem}\label{lem:notc2}
We have the following.

(i) Let $(s,l)\in \R ^2$ satisfy $l<0$.
Then, there exists $t_0>0$ such that the estimate
\eq{est:bdd-quad-s}{\norm{u^{(2)}[u_0,n_0,n_1](t)}{H^s}\lec \norm{(u_0,n_0,n_1)}{H^{s,l}}^2,\quad \forall (u_0,n_0,n_1)\in H^{s,l}}
fails for any $t\in (-t_0,0)\cup (0,t_0)$.

(ii) Let $(s,l)\in \R^2$ satisfy $l>2s-1$.
Then, there exists $t_0>0$ such that the estimate
\eq{est:bdd-quad-w}{\norm{n^{(2)}[u_0](t)}{H^l}\lec \norm{u_0}{H^s}^2,\quad \forall u_0\in H^s}
fails for any $t\in (-t_0,0)\cup (0,t_0)$.

(iii) Let $(s,l)\in \R ^2$ satisfy $l<s-2$.
Then, the estimate
\eq{est:bdd-quad-s2}{\sup _{-T\le t\le T}\norm{u^{(2)}[u_0,n_0,n_1](t)}{H^s}\lec \norm{(u_0,n_0,n_1)}{H^{s,l}}^2,\quad \forall (u_0,n_0,n_1)\in H^{s,l}}
fails for any $T>0$.

(iv) Let $(s,l)\in \R^2$ satisfy $l>s+1$.
Then, the estimate
\eq{est:bdd-quad-w2}{\sup _{-T\le t\le T}\norm{n^{(2)}[u_0](t)}{H^l}\lec \norm{u_0}{H^s}^2,\quad \forall u_0\in H^s}
fails for any $T>0$.
\end{lem}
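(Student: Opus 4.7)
The plan for proving Lemma~\ref{lem:notc2} is, for each part, to exhibit a family of initial data indexed by $N\in\Bo{N}$ for which the second Picard iterate of \eqref{IE:Z} violates the proposed quadratic bound. Since $u^{(2)}[u_0,n_0,n_1]$ is bilinear in $(u_0,(n_0,n_1))$ and $n^{(2)}[u_0]$ is quadratic in $u_0$, one isolates a specific output Fourier mode of the iterate, lower-bounds its modulus by evaluating the Duhamel integral explicitly, and compares the resulting contribution to the squared $H^{s,l}$-norm of the data after optimizing any free scaling parameters via AM--GM.

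For (i) and (ii) I would use the ``resonant'' data $(u_0,n_0,n_1)=(\e f_N,\de g_N,0)$. In (i), the mode $\widehat{u^{(2)}}(t,\ti{K}_N)$ receives its unique contribution from the interaction of the $-\kappa$ Fourier component of $n^{(0)}$ (with $\kappa:=K_N-\ti{K}_N$) and the $K_N$ component of $u^{(0)}$. Expanding $\cos(t'|\kappa|)=\tfrac12(e^{it'|\kappa|}+e^{-it'|\kappa|})$, the ``$-$''-exponential produces a Schr\"odinger phase $-(|\kappa|+|K_N|^2-|\ti{K}_N|^2)$ whose modulus is at most $\ga_2^{-2}$ by \eqref{assump:res}, so its Duhamel integral is bounded below by $c|t|$ for $|t|\le t_0:=\pi\ga_2^2$. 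The ``$+$''-exponential produces a non-resonant phase of modulus $\sim N$ by \eqref{assump:nonres}, hence an $O(1/N)$ contribution which is dominated for $N$ large. Multiplying by $|\ti{K}_N|^s\sim N^s$ and restoring the scaling yields $\norm{u^{(2)}(t)}{H^s}\gec |t|\e\de N^s$. Since $\e^2N^{2s}+\de^2N^{2l}\ge 2\e\de N^{s+l}$ by AM--GM, the ratio of LHS to RHS is $\gec |t|N^{-l}$, which blows up as $N\to\I$ precisely when $l<0$. Part (ii) is symmetric: at $\widehat{n^{(2)}}(t,\kappa)$ the resonant wave-mode has phase $|\kappa|+(|K_N|^2-|\ti{K}_N|^2)=O(1)$ by \eqref{assump:res}, the opposite mode is $O(1/N)$ by \eqref{assump:nonres}, the $\De_\be/|\nabla|$-multiplier brings $\big(\sum_j\be_j\kappa_j^2\big)/|\kappa|\sim N$ (using $\be_1\ne 0$ and $\kappa_1\sim N$), and the $H^l$-weight contributes $N^l$; thus $\norm{n^{(2)}(t)}{H^l}\gec |t|N^{l+1}$, while $\norm{u_0}{H^s}^2\sim N^{2s}$, producing a ratio $\sim |t|N^{l+1-2s}$ that blows up iff $l>2s-1$.

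For (iii) and (iv), where the bound concerns $\sup_{|t|\le T}$ for arbitrary $T>0$, I rely instead on non-resonant interactions between disparate-frequency modes, so that the relevant phase is $\sim N^2$. For (iii), fix any $k\in\Zg^d$ with $|k|\sim 1$ and set $u_0=\e e^{ik\cdot x}$, $n_0=\de e^{iK_N\cdot x}$, $n_1=0$; at output frequency $K_N+k$, both modes of $\cos(t'|K_N|)$ produce Schr\"odinger phases $\pm|K_N|+|K_N+k|^2-|k|^2\sim N^2$, so $|\widehat{u^{(2)}}(t,K_N+k)|\le C/N^2$ uniformly in $t$, with equality in order of magnitude once $TN^2\gg 1$. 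This gives $\sup_{|t|\le T}\norm{u^{(2)}}{H^s}\gec \e\de N^{s-2}$, and optimizing $\e^2+\de^2N^{2l}$ at $\e=\de N^l$ produces a ratio $\sim N^{s-2-l}$ blowing up iff $l<s-2$. For (iv), take $u_0=\e e^{iK_N\cdot x}+\de e^{ik\cdot x}$; the Fourier mode $K_N-k$ of $|u^{(0)}|^2$ comes solely from the cross term and oscillates with phase $\Omega:=|K_N|^2-|k|^2\sim N^2$. A direct evaluation of the reduced-wave integral $\int_0^t\sin((t-t')|K_N-k|)\,e^{-it'\Omega}\,dt'$ exhibits a leading term $\sim\sin(t|K_N-k|)/\Omega$; combined with the $\De_\be/|\nabla|$-gain $\sim N$ and the $H^l$-weight $\sim N^l$, this yields $\sup_{|t|\le T}\norm{n^{(2)}}{H^l}\gec \e\de N^{l-1}$ once $TN\gg 1$. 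Optimizing $\e^2N^{2s}+\de^2$ at $\e=\de N^{-s}$ gives the ratio $\sim N^{l-s-1}$, blowing up iff $l>s+1$.

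The main obstacle throughout is verifying that the singled-out Fourier mode is not cancelled by competing contributions. In (i) and (ii), this is precisely where \eqref{assump:nonres} enters: it guarantees that the non-resonant partner wave-mode enters at the $1/N$ level, too small to spoil the resonant leading term; note that \eqref{assump:res} alone would not suffice. In (iii) and (iv), no other combination of the ansatz's Fourier modes can populate the selected output frequency, so no cancellation can occur. The remaining work is an elementary oscillatory-integral estimate (with careful attention to the leading order in inverse powers of $N$) and a routine AM--GM balancing of the scaling parameters.
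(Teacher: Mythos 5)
Your proposal follows the same strategy as the paper: for (i)--(ii), the resonant data built from $f_N,g_N$ with \eqref{assump:res}/\eqref{assump:nonres} controlling the two $\cos$-modes, and for (iii)--(iv), non-resonant high-low interactions with phase $\sim N^2$, isolating one output Fourier mode in each case. The variations --- output frequency $\ti{K}_N$ in place of $K_N$ in (i), keeping free scaling parameters $\e,\de$ and balancing by AM--GM instead of normalizing the data directly, and choosing $t$ so that the oscillatory phase is $\sim 1$ rather than $\ll 1$ in (iii)--(iv) (which requires a small extra check that the two exponential contributions interfere constructively; the paper's choice $t\sim N^{-2}$ in (iii) sidesteps this by keeping the integrand's real part $\ge\tfrac12$) --- are cosmetic and do not change the argument.
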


\begin{proof}
(i) Set $u_0:=N^{-s}f _N$, $n_0:=N^{-l}g _N$, $n_1=0$ for large $N\in \Bo{N}$.
Then, it holds that $\tnorm{(u_0,n_0,n_1)}{H^{s,l}}\sim 1$.
On the other hand, a direct calculation shows that
\eqq{\F _xu^{(2)}[u_0,n_0,0](t,K_N)=cN^{-s-l}\int _0^te^{-i(t-t')|K_N|^2}\left( e^{it'|K_N-\ti{K}_N|}+e^{-it'|K_N-\ti{K}_N|}\right) e^{-it'|\ti{K}_N|^2}\,dt',}
and, by \eqref{assump:res} and \eqref{assump:nonres}, that
\eqq{&|\F _xu^{(2)}[u_0,n_0,0](t,K_N)|\\
&\ge cN^{-s-l}\left( \left| \int _0^te^{it'\big( |K_N-\ti{K}_N|+|K_N|^2-|\ti{K}_N|^2\big)}\,dt'\right| -\left| \int _0^te^{it'\big( -|K_N-\ti{K}_N|+|K_N|^2-|\ti{K}_N|^2\big)}\,dt'\right| \right) \\
&\ge c|t|N^{-s-l}-c'N^{-s-l-1}\ge c|t|N^{-s-l}}
for $0<|t|\le t_0\sim 1$ (for instance $t_0=\frac{\ga _2^2}{100}$) and $N\gg |t|^{-1}$.
Therefore, we obtain
\eqq{\norm{u^{(2)}[u_0,n_0,0](t)}{H^s}&\gec N^s|\F _xu^{(2)}[u_0,n_0,0](t,K_N)|\gec |t|N^{-l},}
which implies that the estimate \eqref{est:bdd-quad-s} does not hold for all $N$ provided $l<0$.

(ii) We use $u_0:=N^{-s}f _N$ and make a similar argument.
It follows that
\eqq{&|\F _xn^{(2)}[u_0](t,K_N-\ti{K}_N)|\\
&=cN^{-2s}\left| \int _0^t \frac{e^{i(t-t')|K_N-\ti{K}_N|}-e^{-i(t-t')|K_N-\ti{K}_N|}}{|K_N-\ti{K}_N|}|K_N-\ti{K}_N|_\be ^2e^{-it'|K_N|^2}e^{it'|\ti{K}_N|^2}\,dt'\right| ,}
where $|K_N-\ti{K}_N|_\be ^2:=\be _1(\frac{2N-1}{\ga _1})^2+\be _2(\frac{-1}{\ga _2})^2$.
Noting $\be _1\neq 0$, we obtain the lower bound of the above as
\eqq{&cN^{-2s+1}\left( \left| \int _0^te^{it'\big( -|K_N-\ti{K}_N|-|K_N|^2+|\ti{K}_N|^2\big)}\,dt'\right| -\left| \int _0^te^{it'\big( |K_N-\ti{K}_N|-|K_N|^2+|\ti{K}_N|^2\big)}\,dt'\right| \right) \\
&\ge c|t|N^{-2s+1}-c'N^{-2s}\ge c|t|N^{-2s+1},}
for $0<|t|\le t_0\sim 1$ and $N\gg |t|^{-1}$.
Hence we have
\eqq{\norm{n^{(2)}[u_0](t)}{H^l}&\gec |t|N^{l-2s+1},}
and the estimate \eqref{est:bdd-quad-w} does not hold for all $N$ provided $l-2s+1>0$.

(iii) Consider the following initial data with $H^{s,l}$ norm $\sim 1$; $u_0:=1$, $n_0(x):=N^{-l}\cos (\frac{N}{\ga _1}x_1)$, $n_1:=0$ for large $N\in \Bo{N}$.
We see that
\eqq{\F _xu^{(2)}[u_0,n_0,n_1](t,(\frac{N}{\ga _1},0,\dots ,0))=cN^{-l}e^{-it(\frac{N}{\ga _1})^2}\int _0^te^{it'(\frac{N}{\ga _1})^2}\cos (t'\frac{N}{\ga _1})\,dt'.}
Taking $t=\frac{\ga _1^2}{100N^2}$, we have $\Re \left[ e^{it'(\frac{N}{\ga _1})^2}\cos (t'\frac{N}{\ga _1})\right] \ge \frac{1}{2}$ for $0<t'<t$, obtaining
\eqq{\big| \F _xu^{(2)}[u_0,n_0,n_1](\frac{\ga _1^2}{100N^2},(\frac{N}{\ga _1},0,\dots ,0))\big| \ge cN^{-l-2}.}
Hence, it holds that
\eqq{\sup _{-T\le t\le T}\norm{u^{(2)}[u_0,n_0,n_1](t)}{H^s}\gec N^{s-l-2}}
for $T>0$ and $N>(\frac{\ga _1^2}{100T})^{1/2}$.
Therefore, \eqref{est:bdd-quad-s2} does not hold if $s-l-2>0$.

(iv) Set $u_0:=1+N^{-s}e^{i\frac{N}{\ga _1}x_1}$, which has an $H^s$ norm $\sim 1$.
Some calculation shows
\eqq{&\F _xn^{(2)}[u_0](t,(\frac{N}{\ga _1},0,\dots ,0))=c\int _0^t \frac{\sin \left( (t-t')\frac{N}{\ga _1}\right)}{N/\ga _1}\be _1(\frac{N}{\ga _1})^2e^{-it'(\frac{N}{\ga _1})^2}N^{-s}\,dt'\\
&=cN^{1-s}\left\{ \frac{e^{-it(\frac{N}{\ga _1})^2}-e^{it\frac{N}{\ga _1}}}{(\frac{N}{\ga _1})^2+\frac{N}{\ga _1}}-\frac{e^{-it(\frac{N}{\ga _1})^2}-e^{-it\frac{N}{\ga _1}}}{(\frac{N}{\ga _1})^2-\frac{N}{\ga _1}}\right\}\\
&=cN^{1-s}\frac{1}{(\frac{N}{\ga _1})^2\{ (\frac{N}{\ga _1})^2-1\}}\left\{ -2N^2i\sin (t\frac{N}{\ga _1}) +2N\cos (t\frac{N}{\ga _1}) -2Ne^{-it(\frac{N}{\ga _1})^2}\right\} .}
Then, taking $t=\frac{\pi \ga _1}{2N}$, we see that the first term in the last line above dominates the rest and
\eqq{\big| \F _xn^{(2)}[u_0](\frac{\pi \ga _1}{2N},(\frac{N}{\ga _1},0,\dots ,0))\big| \ge cN^{-s-1},}
hence
\eqq{\sup _{-T\le t\le T}\norm{n^{(2)}[u_0](t)}{H^l}\gec N^{l-s-1}}
for $T>0$ and $N>\frac{\pi \ga _1}{2T}$, concluding that \eqref{est:bdd-quad-w2} does not hold if $l-s-1>0$.
\end{proof}

Next, we show the wave norm-inflation phenomena (i) employing the argument of Holmer~\cite{Ho}.
We also show (ii) as a by-product of the proof.

We consider the case $s<1$ first.
Take an arbitrary $(s',l')$ such that $s<s'$, $l\ge l'$ and $0<2s'-1<l'<1$, then set $s_+$ so that $l'=2s_+-1$ (see Figure~3, the left one).
\begin{figure}\label{fig:choice}
\begin{center}
\hspace*{-50pt}
\unitlength 0.1in
\begin{picture}( 33.6000, 32.0000)( -5.6000,-34.0000)
\put(1.4500,-2.0000){\makebox(0,0)[rt]{$l$}}%
%
{\color[named]{Black}{%
\special{pn 8}%
\special{pa 200 3400}%
\special{pa 200 200}%
\special{fp}%
\special{sh 1}%
\special{pa 200 200}%
\special{pa 180 268}%
\special{pa 200 254}%
\special{pa 220 268}%
\special{pa 200 200}%
\special{fp}%
\special{pa 0 3200}%
\special{pa 2800 3200}%
\special{fp}%
\special{sh 1}%
\special{pa 2800 3200}%
\special{pa 2734 3180}%
\special{pa 2748 3200}%
\special{pa 2734 3220}%
\special{pa 2800 3200}%
\special{fp}%
}}%
%
{\color[named]{Black}{%
\special{pn 13}%
\special{pa 2800 1800}%
\special{pa 1400 3200}%
\special{fp}%
\special{pa 1400 3200}%
\special{pa 800 3200}%
\special{fp}%
\special{pa 800 3200}%
\special{pa 1400 2000}%
\special{fp}%
\special{pa 1400 2000}%
\special{pa 2800 600}%
\special{fp}%
}}%
%
{\color[named]{Black}{%
\special{pn 8}%
\special{pa 800 3200}%
\special{pa 800 2000}%
\special{dt 0.045}%
\special{pa 800 2000}%
\special{pa 1400 2000}%
\special{dt 0.045}%
\special{pa 1400 2000}%
\special{pa 1400 200}%
\special{dt 0.045}%
}}%
\put(2.3000,-32.4000){\makebox(0,0)[lt]{$0$}}%
\put(6.9000,-32.4000){\makebox(0,0)[lt]{$1/2$}}%
\put(13.7000,-32.4000){\makebox(0,0)[lt]{$1$}}%
\put(1.6000,-31.6000){\makebox(0,0)[rb]{$0$}}%
\put(1.6000,-19.4000){\makebox(0,0)[rt]{$1$}}%
%
{\color[named]{Black}{%
\special{pn 13}%
\special{pa 180 2000}%
\special{pa 220 2000}%
\special{fp}%
\special{pa 800 3220}%
\special{pa 800 3180}%
\special{fp}%
\special{pa 1400 3180}%
\special{pa 1400 3220}%
\special{fp}%
}}%
\put(28.0000,-32.4000){\makebox(0,0)[rt]{$s$}}%
%
{\color[named]{Black}{%
\special{pn 4}%
\special{sh 1}%
\special{ar 600 800 14 14 0  6.28318530717959E+0000}%
\special{sh 1}%
\special{ar 900 2300 14 14 0  6.28318530717959E+0000}%
\special{sh 1}%
\special{ar 1250 2300 14 14 0  6.28318530717959E+0000}%
\special{sh 1}%
\special{ar 1250 3200 14 14 0  6.28318530717959E+0000}%
\special{sh 1}%
\special{ar 800 3200 14 14 0  6.28318530717959E+0000}%
}}%
\put(6.4500,-7.5000){\makebox(0,0)[rb]{{\footnotesize $(s,l)$}}}%
\put(9.5000,-22.6000){\makebox(0,0)[rb]{{\colorbox[named]{White}{\color[named]{Black}{{\footnotesize $(s',l')$}}}}}}%
\put(8.0000,-31.7500){\makebox(0,0)[rb]{{\footnotesize $(1/2,0)$}}}%
%
{\color[named]{Black}{%
\special{pn 8}%
\special{pa 2800 1800}%
\special{pa 2800 600}%
\special{ip}%
}}%
%
{\color[named]{Black}{%
\special{pn 4}%
\special{pa 2800 1130}%
\special{pa 876 3056}%
\special{fp}%
\special{pa 2800 1160}%
\special{pa 846 3116}%
\special{fp}%
\special{pa 2800 1190}%
\special{pa 816 3176}%
\special{fp}%
\special{pa 2800 1220}%
\special{pa 826 3196}%
\special{fp}%
\special{pa 2800 1250}%
\special{pa 856 3196}%
\special{fp}%
\special{pa 2800 1280}%
\special{pa 886 3196}%
\special{fp}%
\special{pa 2800 1310}%
\special{pa 916 3196}%
\special{fp}%
\special{pa 2800 1340}%
\special{pa 946 3196}%
\special{fp}%
\special{pa 2800 1370}%
\special{pa 976 3196}%
\special{fp}%
\special{pa 2800 1400}%
\special{pa 1006 3196}%
\special{fp}%
\special{pa 2800 1430}%
\special{pa 1036 3196}%
\special{fp}%
\special{pa 2800 1460}%
\special{pa 1066 3196}%
\special{fp}%
\special{pa 2800 1490}%
\special{pa 1096 3196}%
\special{fp}%
\special{pa 2800 1520}%
\special{pa 1126 3196}%
\special{fp}%
\special{pa 2800 1550}%
\special{pa 1156 3196}%
\special{fp}%
\special{pa 2800 1580}%
\special{pa 1186 3196}%
\special{fp}%
\special{pa 2800 1610}%
\special{pa 1216 3196}%
\special{fp}%
\special{pa 2800 1640}%
\special{pa 1250 3190}%
\special{fp}%
\special{pa 2800 1670}%
\special{pa 1276 3196}%
\special{fp}%
\special{pa 2800 1700}%
\special{pa 1306 3196}%
\special{fp}%
\special{pa 2800 1730}%
\special{pa 1336 3196}%
\special{fp}%
\special{pa 2800 1760}%
\special{pa 1366 3196}%
\special{fp}%
\special{pa 2800 1100}%
\special{pa 906 2996}%
\special{fp}%
\special{pa 2800 1070}%
\special{pa 936 2936}%
\special{fp}%
\special{pa 2800 1040}%
\special{pa 966 2876}%
\special{fp}%
\special{pa 2800 1010}%
\special{pa 996 2816}%
\special{fp}%
\special{pa 2800 980}%
\special{pa 1026 2756}%
\special{fp}%
\special{pa 2800 950}%
\special{pa 1056 2696}%
\special{fp}%
\special{pa 2800 920}%
\special{pa 1086 2636}%
\special{fp}%
\special{pa 2800 890}%
\special{pa 1116 2576}%
\special{fp}%
}}%
%
{\color[named]{Black}{%
\special{pn 4}%
\special{pa 2800 860}%
\special{pa 1146 2516}%
\special{fp}%
\special{pa 2800 830}%
\special{pa 1176 2456}%
\special{fp}%
\special{pa 2800 800}%
\special{pa 1206 2396}%
\special{fp}%
\special{pa 2800 770}%
\special{pa 1236 2336}%
\special{fp}%
\special{pa 2800 740}%
\special{pa 1266 2276}%
\special{fp}%
\special{pa 2800 710}%
\special{pa 1296 2216}%
\special{fp}%
\special{pa 2800 680}%
\special{pa 1326 2156}%
\special{fp}%
\special{pa 2800 650}%
\special{pa 1356 2096}%
\special{fp}%
\special{pa 2800 620}%
\special{pa 1386 2036}%
\special{fp}%
}}%
\put(13.0000,-31.5000){\makebox(0,0)[lb]{{\colorbox[named]{White}{\color[named]{Black}{{\footnotesize $(s_+,0)$}}}}}}%
\put(15.5000,-22.6000){\makebox(0,0)[rb]{{\colorbox[named]{White}{\color[named]{Black}{{\footnotesize $(s_+,l')$}}}}}}%
%
{\color[named]{Black}{%
\special{pn 13}%
\special{pa 926 2300}%
\special{pa 1226 2300}%
\special{fp}%
\special{sh 1}%
\special{pa 1226 2300}%
\special{pa 1158 2280}%
\special{pa 1172 2300}%
\special{pa 1158 2320}%
\special{pa 1226 2300}%
\special{fp}%
\special{pa 1250 2326}%
\special{pa 1250 3176}%
\special{fp}%
\special{sh 1}%
\special{pa 1250 3176}%
\special{pa 1270 3108}%
\special{pa 1250 3122}%
\special{pa 1230 3108}%
\special{pa 1250 3176}%
\special{fp}%
\special{pa 1226 3200}%
\special{pa 826 3200}%
\special{fp}%
\special{sh 1}%
\special{pa 826 3200}%
\special{pa 892 3220}%
\special{pa 878 3200}%
\special{pa 892 3180}%
\special{pa 826 3200}%
\special{fp}%
}}%
\end{picture}
\unitlength 0.1in
\begin{picture}( 29.6000, 32.0000)( -1.6000,-34.0000)
%
{\color[named]{Black}{%
\special{pn 4}%
\special{pa 2790 1320}%
\special{pa 916 3196}%
\special{fp}%
\special{pa 2790 1290}%
\special{pa 886 3196}%
\special{fp}%
\special{pa 2790 1260}%
\special{pa 856 3196}%
\special{fp}%
\special{pa 2790 1230}%
\special{pa 826 3196}%
\special{fp}%
\special{pa 2790 1200}%
\special{pa 816 3176}%
\special{fp}%
\special{pa 2790 1170}%
\special{pa 846 3116}%
\special{fp}%
\special{pa 2790 1140}%
\special{pa 876 3056}%
\special{fp}%
\special{pa 2790 1110}%
\special{pa 906 2996}%
\special{fp}%
\special{pa 2790 1080}%
\special{pa 936 2936}%
\special{fp}%
\special{pa 2790 1050}%
\special{pa 966 2876}%
\special{fp}%
\special{pa 2790 1020}%
\special{pa 996 2816}%
\special{fp}%
\special{pa 2790 990}%
\special{pa 1026 2756}%
\special{fp}%
\special{pa 2790 960}%
\special{pa 1056 2696}%
\special{fp}%
\special{pa 2790 930}%
\special{pa 1086 2636}%
\special{fp}%
\special{pa 2790 900}%
\special{pa 1116 2576}%
\special{fp}%
\special{pa 2790 870}%
\special{pa 1146 2516}%
\special{fp}%
\special{pa 2790 840}%
\special{pa 1176 2456}%
\special{fp}%
\special{pa 2790 810}%
\special{pa 1206 2396}%
\special{fp}%
\special{pa 2790 780}%
\special{pa 1236 2336}%
\special{fp}%
\special{pa 2790 750}%
\special{pa 1266 2276}%
\special{fp}%
\special{pa 2790 720}%
\special{pa 1296 2216}%
\special{fp}%
\special{pa 2790 690}%
\special{pa 1326 2156}%
\special{fp}%
\special{pa 2790 660}%
\special{pa 1356 2096}%
\special{fp}%
\special{pa 2790 630}%
\special{pa 1386 2036}%
\special{fp}%
\special{pa 2790 1350}%
\special{pa 946 3196}%
\special{fp}%
\special{pa 2790 1380}%
\special{pa 976 3196}%
\special{fp}%
\special{pa 2790 1410}%
\special{pa 1006 3196}%
\special{fp}%
\special{pa 2790 1440}%
\special{pa 1036 3196}%
\special{fp}%
\special{pa 2790 1470}%
\special{pa 1066 3196}%
\special{fp}%
\special{pa 2790 1500}%
\special{pa 1096 3196}%
\special{fp}%
}}%
%
{\color[named]{Black}{%
\special{pn 4}%
\special{pa 2790 1530}%
\special{pa 1126 3196}%
\special{fp}%
\special{pa 2790 1560}%
\special{pa 1156 3196}%
\special{fp}%
\special{pa 2790 1590}%
\special{pa 1186 3196}%
\special{fp}%
\special{pa 2790 1620}%
\special{pa 1216 3196}%
\special{fp}%
\special{pa 2790 1650}%
\special{pa 1250 3190}%
\special{fp}%
\special{pa 2790 1680}%
\special{pa 1276 3196}%
\special{fp}%
\special{pa 2790 1710}%
\special{pa 1306 3196}%
\special{fp}%
\special{pa 2790 1740}%
\special{pa 1336 3196}%
\special{fp}%
\special{pa 2790 1770}%
\special{pa 1366 3196}%
\special{fp}%
}}%
\put(1.4500,-2.0000){\makebox(0,0)[rt]{$l$}}%
%
{\color[named]{Black}{%
\special{pn 8}%
\special{pa 200 3400}%
\special{pa 200 200}%
\special{fp}%
\special{sh 1}%
\special{pa 200 200}%
\special{pa 180 268}%
\special{pa 200 254}%
\special{pa 220 268}%
\special{pa 200 200}%
\special{fp}%
\special{pa 0 3200}%
\special{pa 2800 3200}%
\special{fp}%
\special{sh 1}%
\special{pa 2800 3200}%
\special{pa 2734 3180}%
\special{pa 2748 3200}%
\special{pa 2734 3220}%
\special{pa 2800 3200}%
\special{fp}%
}}%
%
{\color[named]{Black}{%
\special{pn 13}%
\special{pa 2800 1800}%
\special{pa 1400 3200}%
\special{fp}%
\special{pa 1400 3200}%
\special{pa 800 3200}%
\special{fp}%
\special{pa 800 3200}%
\special{pa 1400 2000}%
\special{fp}%
\special{pa 1400 2000}%
\special{pa 2800 600}%
\special{fp}%
}}%
\put(2.4000,-32.4000){\makebox(0,0)[lt]{$0$}}%
\put(6.9000,-32.4000){\makebox(0,0)[lt]{$1/2$}}%
\put(13.7000,-32.4000){\makebox(0,0)[lt]{$1$}}%
\put(1.6000,-31.6000){\makebox(0,0)[rb]{$0$}}%
\put(1.6000,-19.4000){\makebox(0,0)[rt]{$1$}}%
%
{\color[named]{Black}{%
\special{pn 13}%
\special{pa 180 2000}%
\special{pa 220 2000}%
\special{fp}%
\special{pa 800 3220}%
\special{pa 800 3180}%
\special{fp}%
\special{pa 1400 3180}%
\special{pa 1400 3220}%
\special{fp}%
}}%
\put(28.0000,-32.4000){\makebox(0,0)[rt]{$s$}}%
%
{\color[named]{Black}{%
\special{pn 13}%
\special{pa 180 1400}%
\special{pa 220 1400}%
\special{fp}%
}}%
%
{\color[named]{Black}{%
\special{pn 13}%
\special{pa 180 800}%
\special{pa 220 800}%
\special{fp}%
}}%
%
{\color[named]{Black}{%
\special{pn 13}%
\special{pa 2000 3180}%
\special{pa 2000 3220}%
\special{fp}%
}}%
\put(1.6000,-7.4000){\makebox(0,0)[rt]{$2$}}%
\put(14.5000,-3.6000){\makebox(0,0)[lb]{{\footnotesize $(s,l)$}}}%
\put(18.5500,-9.6000){\makebox(0,0)[rb]{{\footnotesize $(s',l')$}}}%
\put(12.6000,-22.0000){\makebox(0,0)[rb]{{\footnotesize $(s_-,l_-)$}}}%
\put(24.4000,-22.0000){\makebox(0,0)[lt]{{\footnotesize $(s_+,l_-)$}}}%
\put(24.4000,-9.6000){\makebox(0,0)[rb]{{\footnotesize $(s_+,l')$}}}%
\put(18.9000,-32.4000){\makebox(0,0)[lt]{$3/2$}}%
%
{\color[named]{Black}{%
\special{pn 8}%
\special{pa 1400 2000}%
\special{pa 1400 200}%
\special{dt 0.045}%
\special{pa 1400 2000}%
\special{pa 2000 800}%
\special{dt 0.045}%
\special{pa 2000 800}%
\special{pa 2000 200}%
\special{dt 0.045}%
\special{pa 2000 800}%
\special{pa 1400 1400}%
\special{dt 0.045}%
}}%
%
{\color[named]{Black}{%
\special{pn 4}%
\special{sh 1}%
\special{ar 1600 400 14 14 0  6.28318530717959E+0000}%
\special{sh 1}%
\special{ar 1300 2200 14 14 0  6.28318530717959E+0000}%
\special{sh 1}%
\special{ar 2400 2200 14 14 0  6.28318530717959E+0000}%
\special{sh 1}%
\special{ar 2400 1000 14 14 0  6.28318530717959E+0000}%
\special{sh 1}%
\special{ar 1850 1000 14 14 0  6.28318530717959E+0000}%
\special{sh 1}%
\special{ar 1850 1000 14 14 0  6.28318530717959E+0000}%
}}%
%
{\color[named]{Black}{%
\special{pn 13}%
\special{pa 1876 1000}%
\special{pa 2376 1000}%
\special{fp}%
\special{sh 1}%
\special{pa 2376 1000}%
\special{pa 2308 980}%
\special{pa 2322 1000}%
\special{pa 2308 1020}%
\special{pa 2376 1000}%
\special{fp}%
\special{pa 2400 1026}%
\special{pa 2400 2176}%
\special{fp}%
\special{sh 1}%
\special{pa 2400 2176}%
\special{pa 2420 2108}%
\special{pa 2400 2122}%
\special{pa 2380 2108}%
\special{pa 2400 2176}%
\special{fp}%
\special{pa 2376 2200}%
\special{pa 1326 2200}%
\special{fp}%
\special{sh 1}%
\special{pa 1326 2200}%
\special{pa 1392 2220}%
\special{pa 1378 2200}%
\special{pa 1392 2180}%
\special{pa 1326 2200}%
\special{fp}%
}}%
\put(0.8500,-13.1000){\makebox(0,0)[lt]{$\tfrac{3}{2}$}}%
\end{picture}%
\end{center}
\caption{Choice of parameters in the proof of norm inflation, when $s<1$ (left) and $1\le s<\frac{3}{2}$ (right).}
\end{figure}
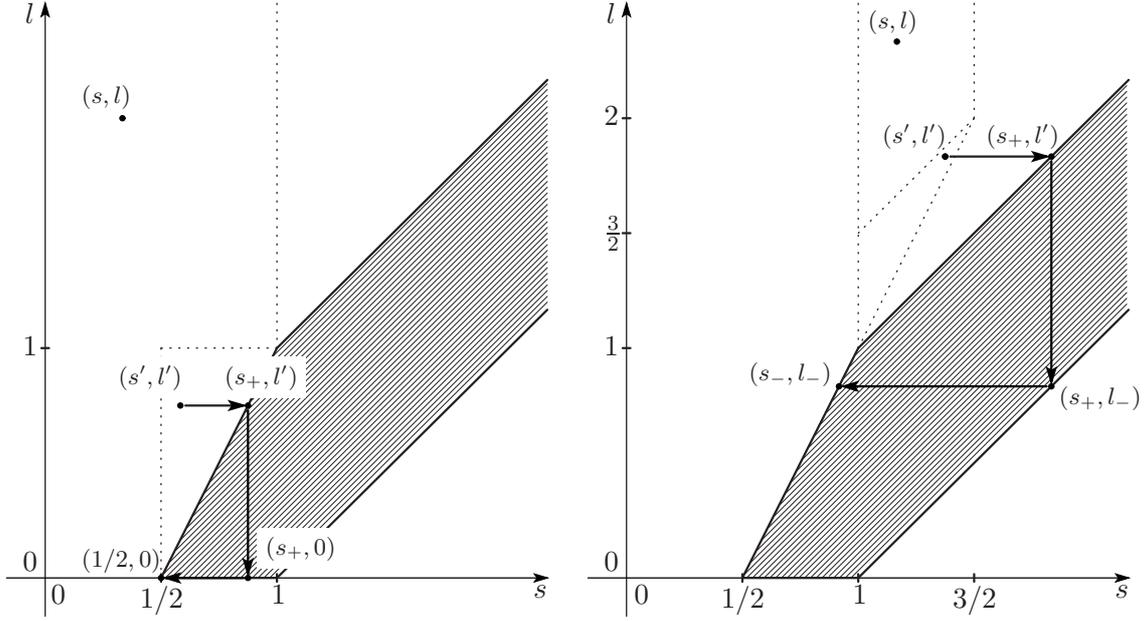
We choose initial data as $u_{0,N}:=N^{-s'}f _N$.
Then, 
\[ \norm{u_{0,N}}{H^{s'}}\sim 1,\qquad \norm{u_{0,N}}{H^s}\sim N^{s-s'}=o(1) \quad (N\to \I ),\]
and the calculation in the proof of Lemma~\ref{lem:notc2}~(ii) shows that
\eq{quad_dominant1}{\norm{n^{(2)}[u_{0,N}](t)}{H^{l'}}\gec |t|N^{l'-2s'+1}}
for any $t\neq 0$ sufficiently close to zero and any $N\gg |t|^{-1}$.

Since $\tnorm{u_{0,N}}{H^{s'}}\sim 1$, we can easily obtain a solution $(u_N,n_N)$ to the initial value problem with data $(u_{0,N},0,0)$, in the space $X^{s',\frac{1}{2},1}_S(T)\times C([-T,T];L^2)
$ with some $T>0$ independent of $N$, by solving the integral equation for $u_N$
\eq{IE-u}{u_N(t)=e^{it\Delta}u_{0,N}+i\la \int _0^te^{i(t-t')\Delta}\big[ u(t')\int _0^{t'}\frac{\sin ((t'-t'')|\nabla |)}{|\nabla |}\Delta _\be \big[ u_N(t'')\bbar{u_N(t'')}\big] \,dt''\big] \,dt'}
with the aid of linear and bilinear estimates \eqref{est_lin_sol}, \eqref{est_be_s}, \eqref{est_be_w} at the regularity $(s',0)$, and then define $n_N$ by the formula
\eqq{n_N(t)&=-\int _0^t\frac{\sin ((t-t')|\nabla |)}{|\nabla |}\Delta _\be \big[ u_N(t')\bbar{u_N(t')}\big] \,dt'.}
Moreover, we recall Remark~\ref{rem_wavemoissho2} and apply to \eqref{IE-u} the estimates \eqref{est_lin_sol}, \eqref{est_be_s} with $(\sgm ,0)$, $\sgm =s_+$ or $\frac{1}{2}$, and \eqref{est_be_w} with $(s',0)$, to obtain
\eqq{\norm{u_N}{X^{\sgm ,\frac{1}{2},1}_S(T)}\lec \norm{u_{0,N}}{H^{\sgm}}+T^{1-}\norm{u_N}{X^{\sgm ,\frac{1}{2},1}_S(T)}\norm{u_N}{X^{s',\frac{1}{2},1}_S(T)}^2.}
Since $\tnorm{u_N}{X^{s',\frac{1}{2},1}_S(T)}\lec \tnorm{u_{0,N}}{H^{s'}}\sim 1$, we have
\eqq{\norm{u_N}{X^{\sgm ,\frac{1}{2},1}_S(T)}\lec \norm{u_{0,N}}{H^{\sgm}}\sim N^{\sgm -s'}}
with $\sgm =s_+$ or $\frac{1}{2}$ and $T$ sufficiently small (still independent of $N$).
From a similar argument, we also conclude that the Duhamel term in \eqref{IE-u} is much smaller in $H^{s_+}$ than $u_N$, namely
\eqq{\norm{u_N-e^{it\Delta}u_{0,N}}{X^{s_+,\frac{1}{2},1}_S(T)}\lec T^{1-}\norm{u_N}{X^{s_+,\frac{1}{2},1}_S(T)}\norm{u_N}{X^{\frac{1}{2},\frac{1}{2},1}_S(T)}^2\lec T^{1-}N^{s_++1-3s'}.}

We use the above estimates to measure the difference between $n^{(2)}[u_{0,N}]$ and $n_N$ in $H^{l'}$.
Notice that
\eqq{n^{(2)}[u_{0,N}]-n_N&=\int _0^t\frac{\sin ((t-t')|\nabla |)}{|\nabla |}\Delta _\be F_N(t')\,dt',\\
F_N(t')&=u_N(t')\bbar{u_N(t')}-e^{it'\Delta}u_{0,N}\bbar{e^{it'\Delta}u_{0,N}}\\
&=u_N(t')\big( \bbar{u_N(t')-e^{it'\Delta}u_{0,N}}\big) +\big( u_N(t')-e^{it'\Delta}u_{0,N}\big) \bbar{e^{it'\Delta}u_{0,N}}.}
Hence, we employ \eqref{est_be_w} with regularity $(s_+,l')$ to obtain
\eqq{&\sup _{|t|\le T}\norm{n^{(2)}[u_{0,N}](t)-n_N(t)}{H^{l'}}\\
&\lec \norm{\int _0^t\frac{e^{-i(t-t')|\nabla |}}{|\nabla |}\Delta _\be F_N(t')\,dt'}{X^{l',\frac{1}{2},1}_{W_+}(T)}+\norm{\int _0^t\frac{e^{i(t-t')|\nabla |}}{|\nabla |}\Delta _\be F_N(t')\,dt'}{X^{l',\frac{1}{2},1}_{W_-}(T)}\\
&\lec T^{\frac{1}{2}-}\Big( \norm{u_N}{X^{s_+,\frac{1}{2},1}_S(T)}+\norm{e^{it\Delta}u_{0,N}}{X^{s_+,\frac{1}{2},1}_S(T)}\Big) \norm{u_N-e^{it\Delta}u_{0,N}}{X^{s_+,\frac{1}{2},1}_S(T)}\\
&\lec T^{\frac{3}{2}-}N^{2s_++1-4s'}=T^{\frac{3}{2}-}N^{l'-4s'+2}.}
Combining this with \eqref{quad_dominant1} and the fact that $2s'-1>0$, we have
\eqq{\norm{n_N(t)}{H^{l}}\ge \norm{n_N(t)}{H^{l'}}\ge c|t|N^{l'-2s'+1}-c'|t|^{\frac{3}{2}-}N^{l'-4s'+2}\gec |t|N^{l'-2s'+1}}
for $0<|t|\ll 1$ and sufficiently large $N$, which shows (i) for the case $s<1$.

For the proof of (ii), we take $s'=s_+=\frac{1}{2}$, $l'=0$ and repeat the above argument.
Sufficiently small $|t|$ then allows us to obtain
\eqq{\norm{n_N(t)}{L^2}\ge c|t|-c'|t|^{\frac{3}{2}-}\gec |t|,}
while letting $N\to \I$ shrinks the initial data in $H^s$, obtaining (ii).

The proof of norm inflation for the case $1\le s<\frac{3}{2}$ is parallel to the case $s<1$, so we will omit the details.
For $(s,l)$ satisfying $1\le s<\frac{3}{2}$ and $l>2s-1$, we choose $(s',l')$ such that $s<s'$, $l\ge l'$, $1<2s'-1<l'<s'+\frac{1}{2}$, and then take $s_+,l_-,s_-$ so that $l'=s_+$, $l_-=s_+-1$, and $l_-=2s_--1$ (see Figure~3, the right one).
For the same initial data $u_{0,N}$, we can show that
\eqq{\norm{n^{(2)}[u_{0,N}](t)}{H^l}&\gec |t|N^{l'-2s'+1},\qquad 0<|t|\ll 1,\quad N\gg |t|^{-1},\\
\norm{u_N}{X^{\sgm ,\frac{1}{2},1}_S(T)}&\lec \norm{u_{0,N}}{H^{\sgm}}\sim N^{\sgm -s'},\qquad \sgm =s_-,s',s_+,\\
\norm{u_N-e^{it\Delta}u_{0,N}}{X^{s_+,\frac{1}{2},1}_S(T)}&\lec N^{s_++2s_--3s'},\\
\sup _{|t|\le T}\norm{n^{(2)}[u_{0,N}](t)-n_N(t)}{H^{l'}}&\lec N^{2s_++2s_--4s'}=N^{3l'-4s'}.}
Since $l'<s'+\frac{1}{2}$ implies $l'-2s'+1>3l'-4s'$, we conclude the norm inflation (i).

At the end, we give a proof of (iv) to conclude this section.

Recall the definition of $K_N$, $\ti{K}_N$, and \eqref{assump:res}.
Put
\eqq{\ti{u}(\tau ,k)&:=\de _{K_N}(k)\chf{[-10\ga _2^{-2},10\ga _2^{-2}]}(\tau +|K_N|^2),\\
\ti{v}(\tau ,k)&:=\de _{\ti{K}_N}(k)\chf{[-10\ga _2^{-2},10\ga _2^{-2}]}(\tau +|\ti{K}_N|^2),\\
\ti{w}(\tau ,k)&:=\de _{\ti{K}_N-K_N}(k)\chf{[-10\ga _2^{-2},10\ga _2^{-2}]}(\tau +|K_N-\ti{K}_N|)}
for large $N\in \Bo{N}$.
Note that
\eqs{\norm{u}{X^{s,b,p}_S}\sim \norm{v}{X^{s,b,p}_S}\sim N^s,\quad \norm{\F _{\tau ,k}^{-1}\ti{w}(\tau ,\pm k)}{X^{l,b,p}_{W_+}}=\norm{\F _{\tau ,k}^{-1}\ti{w}(-\tau ,\pm k)}{X^{l,b,p}_{W_-}}\sim N^l}
for any $b$ and $p$.
We easily verify that
\eqq{\ti{uw}(\tau ,k)\gec \ti{v}(\tau ,k),\quad \ti{v\bar{w}}(\tau ,k)\gec \ti{u}(\tau ,k),\quad \ti{u\bar{v}}(\tau ,k)\gec \ti{w}(-\tau ,k),\quad \ti{\bar{u}v}(\tau ,k)\gec \ti{w}(\tau ,-k),}
which imply the estimates
\eqs{\norm{uw}{X^{s,b-1,p}_S}\gec N^s,\qquad \norm{u}{X^{s,b,p}_S}\norm{w}{X^{l,b,p}_{W_+}}\sim N^sN^l,\\
\norm{v\bar{w}}{X^{s,b-1,p}_S}\gec N^s,\qquad \norm{v}{X^{s,b,p}_S}\norm{\bar{w}}{X^{l,b,p}_{W_-}}\sim N^sN^l,\\
\norm{\frac{\Delta _\be}{\LR{\nabla}}(u\bar{v})}{X^{l,b-1,p}_{W_-}}\gec N^{l+1},\qquad \norm{u}{X^{s,b,p}_S}\norm{v}{X^{s,b,p}_S}\sim N^{2s},\\
\norm{\frac{\Delta _\be}{\LR{\nabla}}(v\bar{u})}{X^{l,b-1,p}_{W_+}}\gec N^{l+1},\qquad \norm{v}{X^{s,b,p}_S}\norm{u}{X^{s,b,p}_S}\sim N^{2s}.}
We can easily disprove \eqref{behanrei1} for $l<0$ and \eqref{behanrei2} for $l>2s-1$ by using these estimates.

The other cases, namely \eqref{behanrei1} for $l<s-1$ and \eqref{behanrei2} for $l>s$, have been already treated in \cite{Ta} in the 1d setting, and the proof in \cite{Ta} can be applied to our case $d\ge 2$ with some trivial modification.
We will omit the details.


\bigskip
\section*{Acknowledgments}

\noindent The author would like to express his great appreciation to Professor Yoshio Tsutsumi for a number of precious suggestions.
This work was partially supported by Grant-in-Aid for JSPS Fellows 08J02196.


\bigskip
\bigskip

\end{document}